\newcommand{\TheTitle}{Energy regularized models for logarithmic SPDEs and their numerical approximations}
\newcommand{\TheAuthors}{J. Cui, D. Hou and Z. Qiao}
\title{{\TheTitle}\footnotemark[1]\thanks{The research is partially supported by the Hong Kong Research Grant Council ECS grant 25302822, start-up funds (P0039016, P0041274) from Hong Kong Polytechnic University and the CAS AMSS-PolyU Joint Laboratory of Applied Mathematics.
		D. Hou's work is partially supported by NSFC grant 12001248, NSF of Jiangsu Province grant
		BK20201020, and Hong Kong Polytechnic University grant 1-W00D.
		Z. Qiao's work is partially supported by Hong Kong Research Council RFS grant RFS2021-5S03 and GRF grant 15302919, Hong Kong Polytechnic University grant 4-ZZLS, and CAS AMSS-PolyU Joint Laboratory of Applied Mathematics.
}}
\author{
  Jianbo Cui\footnotemark[2]\thanks{Department of Applied Mathematics, The Hong Kong Polytechnic University, Hung Hom, Hong Kong. Corresponding author: \email{jianbo.cui@polyu.edu.hk}}
  \and
  Dianming Hou\footnotemark[3]\thanks{School of Mathematics and Statistics, Jiangsu Normal University, Xuzhou, Jiangsu 221116, China. Current address:Department of Applied Mathematics, The Hong Kong Polytechnic University, Hung Hom, Hong Kong.  \email{dmhou@stu.xmu.edu.cn}}
  \and
  Zhonghua
  Qiao\footnotemark[4]\thanks{{Department of Applied Mathematics, The Hong Kong Polytechnic University, Hung Hom, Hong Kong. \email{zhonghua.qiao@polyu.edu.hk} }
}
}
\begin{document}
\graphicspath{{figures/},}
\maketitle

\begin{abstract}
Understanding the properties of the stochastic phase field models is crucial to model processes in several practical applications, such as soft matters and phase separation in random environments. To describe such random evolution, this work proposes and studies two mathematical models and their numerical approximations for parabolic stochastic partial differential equation (SPDE) with a logarithmic Flory--Huggins energy potential.
These multiscale models are built based on a regularized energy technique and thus avoid possible singularities of coefficients. According to the large deviation principle, we show that the limit of the proposed models with small noise naturally recovers the classical dynamics in deterministic case. Moreover, when the driving noise is multiplicative, the Stampacchia maximum principle holds which indicates the robustness of the proposed model.
One of the main advantages of the proposed models is that they can admit
the energy evolution law and asymptotically preserve the Stampacchia maximum bound of the original problem.
To numerically capture these asymptotic behaviors,  we investigate the  semi-implicit discretizations for regularized logrithmic SPDEs.
Several numerical results  are presented to verify our theoretical findings.
\end{abstract}

\begin{keywords}
parabolic stochastic partial  differential equation; logarithmic Flory--Huggins potential;
Stampacchia maximum bound;
 energy regularization; semi-implicit discretization
\end{keywords}

\begin{AMS}
	60H15; 65M15;  35R60; 65M12; 60H35;
\end{AMS}

\section{Introduction}
The phase field model, such as  the Allen-Cahn equation \cite{ALLEN19791085}, has become an popular tool to model processes involving thin interface layers between almost homogeneous
regions emerged from many
scientific, engineering, and industrial applications. The unknown solution of the
model, also called the order parameter, could represent the normalized density of the involved phase for phase separation process \cite{brokate2012hysteresis}. For example, the sets of $\{u=1\}$ and $\{u=-1\}$ often denotes the pure regions. Nowadays, more attention has been paid on stochastic
phase field models (see, e.g., \cite{MR3765890,MR1359472,Cer03,KORV07,Web10,Yip98}) since uncertainty and randomness may arise from thermal effects,
impurities of materials, intrinsic instabilities of dynamics, etc.

In this paper, we are mainly interested in the following parabolic SPDE with a Flory--Huggins logarithmic potential,
\begin{align}\label{log-sac}
	du(t)-\sigma A u(t)dt+F_{\log}'(u(t))dt=\epsilon B(u(t))dW(t).
\end{align}
Here $A$ is the Laplacian operator defined on a Lipschitz bounded domain $\mathcal D\subset \mathbb R^d, d\le 3,$ equipped with the homogeneous Neumann boundary condition or periodic boundary condition, and $\sigma>0$ is associated with the diffuse interface width.
The logarithmic Flory--Huggins  potential  $\int_{\mathcal D}F_{\log}(u) dx$ is determined by the following function
$$
F_{log}(\xi):=(1+\xi)\ln(1+\xi)+(1-\xi)\ln(1-\xi)-c\xi^2, \; \xi\in (-1,1),\; c\ge 0,
$$
which has two global minima points in the interior of the physically relevant  domain. The driving noise $W$ is a cylindrical Wiener process, $\epsilon$ is the noise intensity and $B$ is a suitable Hilbert--Schmidt operator.
This kind of noise is important in soft matters since the elements constituting soft matter (colloidal particles, polymer molecules, etc.) are doing Brownian motion, and their behaviour determines the dynamical response of soft matter (also known as the fluctuation-dissipation theorem in \cite{Doi13}).  It is known that the free energy with the logarithmic potential is often considered to be more physically realistic than that with a smooth polynomial potential since the free energy with the logarithmic potential can be derived from the regular or ideal solution theories \cite{Doi13}. The main difficulties of building mathematical models for logarithmic SPDEs lie on the singularity of the logarithmic Flory--Huggins potential and random effect which emerge from the polymer science community.

In deterministic case ($\epsilon=0$), some strategies have been successfully used to deal with the logarithmic Flory--Huggins potential, for example, replacing it by a polynomial of even
degree with a strictly positive dominant coefficient or using the penalization
method (see, e.g., \cite{MR1327930}).
For parabolic SPDEs of phase field type in the smooth nonlinearity case,
there have also been fruitful theoretical and numerical results  \cite{MR4224928,AKM16,BCH18,MR3986273,MR3906990,CCZZ18,CH18,CH19,CH20,CHS21S,FKLL18,FLZ17,KLL18,LQ18,MR4092279,MP17,MR4102717,MR4140034}, just to name a few.
 However, much less is known in the stochastic case, especially on the modeling and simulation of \eqref{log-sac}. In \cite{MR2349572,MR3373303}, the authors obtained the well-posedness of a stochastic Cahn--Hilliard equation with logarithmic potential by imposing additional reflection measures.  A variational approach based on suitable monotonicity and compactness arguments to study parabolic SPDEs with logarithmic potential driving by multiplicative noise was proposed (see \cite{MR3778274,MR4281433} for the stochastic Cahn--Hilliard equation and \cite{MR4151195} for the stochastic Allen--Cahn equation). To better understand the properties of SPDE with the Flory--Huggins potential, one useful and important tool is through the numerical approximation. However, this approach requires a well-defined mathematical model which is also easy to handle and maintains as much of the nature and properties of the original system as possible. This is very challenging in the stochastic case for the soft matter dynamics with the logarithmic Flory--Huggins potential.
 First, different from the deterministic case, one can not expect that the classical maximum principle of parabolic PDE or the convexity of the energy functional could be used to obtain the point-wise bound (see, e.g., \cite{MR4116082,MR4443516,MR4417002,DJLQ19,DJLQ21}) due to the random effect.
 Second, the existing results either use the polynomial approximation or add the reflection measure for the drift coefficient of the stochastic dynamics, which may changes the properties of the original problem, especially on the ideal solution theory \cite{Doi13} and energy evolution. Last but not least, it is still unknown how to design a suitable numerical approximation and analyze its convergence properties  for parabolic SPDE with the  Flory--Huggins potential.

To overcome the above issues, we will first adopt the energy regularization approach in \cite{MR3813596,C2020} to propose a convergent and well-posed multiscale problem with the regularization parameter $\delta$. In particular, we will prove the convergence of the regularized problems in two cases, i.e., the small noise ($\epsilon\sim o(1)$) in Case 1 and a special multiplicative noise in Case 2.
We would like mention that Case 1 is inspired by the large deviation principle of random perturbations of classical dynamical systems \cite{MR1652127} and the observation
that the simulation of \eqref{log-sac} may escape the maximum bound in applications.
The assumptions in Case 2 is similar to that of \cite{MR3373303} where the  Stampacchia maximum principle is expected to hold (see \cite{MR3427686} for the Stampacchia maximum principle).
Then applying numerical discretization to the regularized problem will not suffer from the numerical vacuum issue.
Specifically, we study the stabilized semi-implicit numerical method, where the spectral Galerkin method  is chosen as the space discretization.
To analyze the convergence of the energy regularized numerical method, the key ingredient of our approach lies on the estimate of the probability that the solution escapes
Stampacchia maximum bound. As a consequence, the error analysis of the proposed methods is presented.

 We would like to remark that this energy regularized technique could directly provide a
well-posed and natural regularized  model for \eqref{log-sac} which may also works for fourth order logarithmic SPDE, like the stochastic Cahn--Hilliard equation with logarithmic potential. Meanwhile, the Stampacchia maximum principle compensates for the deficiencies of regularization technique that it may not capture the singular behavior of the solution. The numerical simulation also indicates the advantages of the proposed energy regularized models and numerical approximations. For example, it is observed that the  Stampacchia maximum principle is asymptotically preserved by the energy regularized scheme for the first time.
The limit of \eqref{log-sac} with small noise could recover the Allen--Cahn equation with Flory--Huggins potential (see, e.g., \cite{MR4443516}).

The rest of this paper is organized as follows. In section \ref{sec-2}, we present the main setting and the useful properties of regularized parabolic SPDEs. In section \ref{sec-3}, we propose  semi-implicit discretizations for the regularized models, and study their convergence. Several numerical examples are shown in section \ref{sec-4} to verify the theoretical findings.

\section{Regularized parabolic SPDEs with logarithmic potential}
\label{sec-2}
This section is devoted to presenting the useful properties of the regularized SPDEs with logarithmic potential via the energy regularization technique. Before that, let us first introduce the main setting of this paper.

\subsection{Setting}
\label{set-1}
Let $\bs\ge 0, T>0$.
Assume that there exists an orthonormal basis of $e_i$ of $H=L^2(\mathcal D)$ such that $-Ae_i=\lambda_i e_i$ with $\lambda_0=0< \lambda_1<\cdots<\lambda_j<\lambda_{j+1}\to\infty.$
For example,
$\mathcal D$ can be chosen as the cuboid in $\mathbb R^d, d\in \mathbb N^+.$
Denote $H^{\bs}:=W^{\bs,2}(\mathcal D)$ which
is equivalent to the fractional Sobolev space $\mathbb L H \oplus (-A)^{\frac \bs 2}$ (see, e.g., \cite{CHS21S}). Here $\mathbb L H$ is the projection from $H$ to Span$\{e_0\}$.
Let $\mathbb H$ and $\widetilde H$ be two Hilbert space.
The set of bounded linear operators from $\HH$ to $\widetilde H$ is denoted by $\mathcal{L}(\HH,\widetilde H)$, and the set of Hilbert--Schmidt linear operators from $\HH$ to $\widetilde H$ is denoted by  $\mathcal{L}_2(\HH,\widetilde H)$, with associated norm denoted by $\|\cdot\|_{\mathcal{L}_2(\HH,\widetilde H)}$. We also denote by $\mathcal{L}_2(\mathbb H):=\mathcal{L}_2(\HH,\HH).$
For convenience, we assume that the initial value $u(0)\in H^{\bs}$ for some $\bs\ge 0$ satisfying $\|u(0)\|_{E}<1$, where $E=\mathcal C(\mathcal D)$. In this paper, the following two type of noises will be considered.
In Case 1, we assume that $\epsilon\sim o(1)$ and the operator $B$ satisfies that
\begin{align}\label{lip-hs}
	\|B(u)-B(v)\|_{\mathcal L_2(H,H^{\gamma})}&\le C_{\bs}\|u-v\|_{H^{\gamma}}, \\\nonumber
	\|B(u)\|_{\mathcal L_2(H,H^{\gamma})}&\le C_{\bs}(1+\|u\|_{H^{\gamma}})
\end{align}
for any $0\le \gamma \le \bs $ and some $C_{\bs}\ge 0$.
In Case 2, we assume that $\epsilon \sim O(1)$, and $B$ is defined by
\begin{align*}
	B(v)e_i:=h_i(v), \; h_i\in W_0^{\bs+1,\infty}[-1,1],\; \sum_{i\in \mathbb N}\|h_i\|_{W^{\bs+1,\infty}_0[-1,1]}^2< \infty.
\end{align*}
As a consequence, Case 2 also implies \eqref{lip-hs} with some $\bs\ge 0$ (see \cite{MR4151195}).

Now we introduce the energy regularized SPDE,
\begin{align}\label{log-reg-sac}
	du^{\delta}(t)-\sigma A u^{\delta}(t)dt+(F_{\log}^{\delta})'(u^{\delta}(t))dt=\epsilon B(u^{\delta}(t))dW(t), \; u^{\delta}(0)=u(0)
\end{align}
with a regularized logarithmic potential function
\begin{align*}
	F_{log}^{\delta}(\xi):=&\frac 1 2 (\xi+1)\ln((\xi+1)^2+\delta)+\sqrt{\delta} \arctan( \frac {\xi+1}{\sqrt{\delta}})-\xi\\
	&-(\frac 12 (\xi-1)\ln((\xi-1)^2+\delta)+\sqrt{\delta} \arctan (\frac {\xi-1}{\sqrt{\delta}})-\xi)-c\xi^2, c\ge 0.
\end{align*}
Here $\delta> 0$ is a small parameter.
The mild solution $u^{\delta}$  of \eqref{log-reg-sac} is  a adapted stochastic process satisfying that for $t\in [0,T],$
\begin{align*}
	u^{\delta}(t)=S(t)u(0)-\int_0^t S(t-s) (F_{\log}^{\delta})'(u^{\delta}(s))ds+\epsilon \int_{0}^t S(t-s)B(u^{\delta}(s))ds,\; a.s.
\end{align*}
Here $S(\cdot)$ is the semigroup generated by $A$ on $H$.
We denote
$F_{\log,2}^{\delta}(\xi):=-c\xi^2, F_{\log,1}^{\delta}=F_{\log}^{\delta}-F_{\log,2}^{\delta}.$
Furthermore, direct calculations yields that
\begin{align*}
	(F_{log}^{\delta})'(\xi)
	&=\frac 12 \ln((\xi+1)^2+\delta)-\frac 12 \ln((\xi-1)^2+\delta)-2c\xi,\\
	(F_{log}^{\delta})''(\xi) &= \frac {2(1-\xi^2)+2\delta }{((1+\xi)^2+\delta)((1-\xi)^2+\delta)}-2c.
\end{align*}
As we can see, this regularization will let the drift  coefficient satisfy the Lipschitz condition. Thus, similar to \cite{C2020},
the modified energy $H_{\delta}(v)=\frac{\sigma}{2}\|\nabla v\|^2+\int_{\mathcal{D}}F_{\log}^{\delta}(v(\xi))d\xi$ is well-defined.  However, as a cost,
the bound of the first derivative of $F_{\log}^{\delta}\sim O(\ln(\frac 1{\delta}))$ and that of higher derivatives may depend on $\frac 1\delta$
polynomially.

\subsection{A priori bound of regularized SPDE with logarithmic potential}
In this part, we present some useful properties, including the well-posedness and moment bounds in the energy space,  of \eqref{log-reg-sac}. In what follows, let $\sigma=1$ for the convenience of the presentation.
In order to apply the It\^o formula rigorously, one could use
the spectral Galerkin approximation of \eqref{log-reg-sac} and then taking limit on the  Galerkin projection parameter.  For convenience, this procedure has been omitted in this paper.

\begin{prop}\label{prop-wel}
	Let Setting \ref{set-1} hold with some $\bs\ge 0$.
	There exists a unique mild solution of \eqref{log-reg-sac} satisfying for any $p\ge 1$,
	\begin{align*}
		\sup_{t\in [0,T]}\|u^{\delta}(t)\|_{L^{2p}(\Omega;H)}^{2p} \le C(p,T,u(0)).
	\end{align*}
	Let Setting \ref{set-1} hold with some $\bs\ge 1$. In addition assume that $\epsilon \sim O(\delta^{\frac 12})$ in Case 1. It holds that
	\begin{align*}
		\sup_{t\in [0,T]}\|H_{\delta}(u^{\delta}(t))\|_{L^p(\Omega;\mathbb R)}\le C(p,T,u(0)).
	\end{align*}
\end{prop}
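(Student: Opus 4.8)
The starting point is that the regularization makes the drift well behaved: a direct estimate from the displayed formula for $(F_{\log}^{\delta})''$ gives $\|(F_{\log}^{\delta})''\|_{L^{\infty}(\mathbb R)}\le C(\delta^{-1/2}+c)$, so $(F_{\log}^{\delta})'$ is globally Lipschitz on $\mathbb R$ with at most linear growth, $(F_{\log}^{\delta})'(0)=0$, and moreover $(F_{\log}^{\delta})'(\xi)\,\xi\ge -2c\xi^2$ because $\xi\mapsto\tfrac12\ln\tfrac{(\xi+1)^2+\delta}{(\xi-1)^2+\delta}$ has the same sign as $\xi$. For the first assertion I would run the standard Banach fixed-point argument for the mild formulation in the space of adapted processes with norm $\sup_{t\in[0,T]}e^{-\beta t}\|u(t)\|_{L^{2p}(\Omega;H)}$: the semigroup $S(\cdot)$ is a contraction on $H$, the Nemytskii map induced by $(F_{\log}^{\delta})'$ is Lipschitz on $H$, and $t\mapsto\epsilon\int_0^tS(t-s)B(u(s))\,dW(s)$ is well defined and Lipschitz in $u$ via the It\^o isometry and $\|B(u)\|_{\mathcal L_2(H,H)}\le C(1+\|u\|_H)$ (from \eqref{lip-hs} with $\gamma=0$ in Case 1, and directly from $\sum_i\|h_i\|_{\infty}^2<\infty$ in Case 2). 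Choosing $\beta$ large gives a contraction, hence a unique mild solution; the moment bound then follows by applying It\^o's formula to $\|u^{\delta}(t)\|_H^{2p}$ (rigorously through the Galerkin approximation, as remarked), using $\langle Au^{\delta},u^{\delta}\rangle\le0$, the sign condition on $(F_{\log}^{\delta})'$, the linear growth of $\|B(u^{\delta})\|_{\mathcal L_2(H,H)}$ in the It\^o correction, the Burkholder--Davis--Gundy inequality for the martingale term, and Gr\"onwall's lemma; every constant is independent of $\delta$.

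For the energy bound I would use the gradient-flow structure: with $\sigma=1$, \eqref{log-reg-sac} reads $du^{\delta}=-DH_{\delta}(u^{\delta})\,dt+\epsilon B(u^{\delta})\,dW$ with $DH_{\delta}(v)=-Av+(F_{\log}^{\delta})'(v)$, and It\^o's formula (again via Galerkin) gives
\begin{align*}
	dH_{\delta}(u^{\delta}) &= -\|DH_{\delta}(u^{\delta})\|^2\,dt+\epsilon\langle DH_{\delta}(u^{\delta}),B(u^{\delta})\,dW\rangle\\
	&\quad+\frac{\epsilon^2}{2}\sum_i\Big(\|\nabla B(u^{\delta})e_i\|^2+\langle (F_{\log}^{\delta})''(u^{\delta})B(u^{\delta})e_i,B(u^{\delta})e_i\rangle\Big)dt.
\end{align*}
The first term is $\le0$ and is dropped. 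Since $F_{\log,1}^{\delta}$ is bounded below uniformly in $\delta$, one has $\|u^{\delta}\|_{H^1}^2\le 2H_{\delta}(u^{\delta})+2c\|u^{\delta}\|^2+C|\mathcal D|$, so the term $\tfrac{\epsilon^2}{2}\sum_i\|\nabla B(u^{\delta})e_i\|^2=\tfrac{\epsilon^2}{2}\|B(u^{\delta})\|_{\mathcal L_2(H,H^1)}^2\le C\epsilon^2(1+\|u^{\delta}\|_{H^1}^2)$ — here \eqref{lip-hs} with $\gamma=1$ and thus $\bs\ge1$ enters — is amenable to Gr\"onwall once combined with the $L^2$ bound from the first part. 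The remaining correction is controlled by $\tfrac{\epsilon^2}{2}\|(F_{\log}^{\delta})''\|_{L^{\infty}}\|B(u^{\delta})\|_{\mathcal L_2(H,H)}^2\le C\epsilon^2\delta^{-1/2}(1+\|u^{\delta}\|^2)$; in Case 1 the hypothesis $\epsilon\sim O(\delta^{1/2})$ renders $\epsilon^2\delta^{-1/2}$ bounded, whereas in Case 2 one instead estimates $\sum_i\langle(F_{\log}^{\delta})''(u^{\delta})h_i(u^{\delta}),h_i(u^{\delta})\rangle$ pointwise, exploiting that $h_i$ and $h_i'$ vanish at $\pm1$ (so, taking $\bs\ge1$, $|h_i(\xi)|\le C\|h_i\|_{W^{2,\infty}_0[-1,1]}(1-|\xi|)^2$, with $h_i$ extended by zero) to compensate the $\delta^{-1/2}$ singularity of $(F_{\log}^{\delta})''$, which is concentrated near $\xi=\pm1$; a short computation then shows $(F_{\log}^{\delta})''(\xi)h_i(\xi)^2$ is bounded uniformly in $\delta$ and $\xi$. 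Finally, applying It\^o's formula to $(H_{\delta}(u^{\delta})+K)^p$ for a constant $K$ chosen so that $H_{\delta}+K\ge1$, followed by BDG, Young's inequality and Gr\"onwall, yields the claimed $L^p$ bound with constants independent of $\delta$.

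The main obstacle is precisely this last correction term: because $(F_{\log}^{\delta})''$ blows up like $\delta^{-1/2}$ as $\delta\to0$, the energy estimate cannot be closed uniformly in $\delta$ by soft arguments, and one must rely either on the scaling $\epsilon=O(\delta^{1/2})$ or on the compatibility between the decay of the noise coefficients $h_i$ at $\pm1$ and the behaviour of $(F_{\log}^{\delta})''$; making the latter pointwise estimate sharp enough is the delicate point. A secondary, routine, difficulty is the rigorous justification of It\^o's formula for the $\delta$-dependent functionals $\|\cdot\|_H^{2p}$ and $H_{\delta}$ (which is only finite on $H^1$), handled by passing through the finite-dimensional Galerkin systems and letting the projection parameter tend to infinity, as the authors indicate.
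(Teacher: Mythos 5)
Your proposal is correct and follows essentially the same route as the paper: Itô's formula for $\|u^{\delta}\|^2$ combined with the sign of the logarithmic term, BDG and Gronwall for the $H$-moment bound, then Itô's formula for $H_{\delta}(u^{\delta})$, dropping the dissipation and controlling the two Itô correction terms via $\epsilon\sim O(\delta^{1/2})$ in Case 1 and the vanishing of the $h_i$ at $\pm1$ in Case 2. Your refinements (the sharper $O(\delta^{-1/2})$ bound on $(F_{\log}^{\delta})''$ versus the paper's cruder $\epsilon^{2p}/\delta^{p}$ factor, and absorbing the gradient correction term through the energy itself via Gronwall rather than through the a priori bound \eqref{pri-first}) are minor variants of the same argument.
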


\begin{proof}
	The proof of existence and uniqueness of the mild solution is standard due to the Lipschitz continuity of $F_{\log}^{\delta}$ and the assumption \eqref{lip-hs} on $B$ (see, e.g., \cite{Kru14a}). It suffices to show the desired  a priori bound. By applying It\^o's formula to $\frac 12\|u^{\delta}(t)\|^2$ and using the integration by parts,  it follows that
	\begin{align*}
		\frac 12 \|u^{\delta}(t)\|^2 =&\frac 12 \|u(0)\|^2
		-\int_0^t \<\nabla u^{\delta}(s),\nabla u^{\delta}(s)\> ds
		- \int_0^t \<u^{\delta}(s),(F_{\log}^{\delta})'(u^{\delta}(s))\> ds\\
		&+\frac {\epsilon^2}2 \int_0^t \|B(u^{\delta}(s))\|_{\mathcal L_2(H)}^2 ds
		+\int_0^t \epsilon \<u^{\delta}(s),B(u^{\delta}(s))dW(s)\>.
	\end{align*}
	Notice that $\ln(\frac {(\xi+1)^2+\delta} {(\xi-1)^2+\delta})$ is monotone on $(-1,1)$ and that  $\ln(\frac {(\xi+1)^2+\delta} {(\xi-1)^2+\delta })\xi> 0$ when  $|\xi|\ge 1$.
	By taking the $p$th moment, using Burkholder's inequality, Young's inequality and H\"older's inequality, it holds that for $p\ge 1,$
	\begin{align*}
		&\tps\|u^{\delta}(t)\|_{L^{2p}(\Omega;H)}^{2p}
		+\Big\|\int_0^t\|\nabla u^{\delta}(s)\|^2ds\Big\|_{L^{p}(\Omega ;\mathbb R)}^p+\Big\|\int_0^t\<\ln(\frac {(u^{\delta}(s)+1)^2+\delta}{(u^{\delta}(s)-1)^2+\delta}),u^{\delta}(s)\>ds\Big\|_{L^p(\Omega;\mathbb R)}^p
		\\
		&\le C(p,c,T)(1+\epsilon^{2p})\int_0^t (1+\|u^{\delta}(s)\|_{L^{2p}(\Omega;H)}^{2p}) ds.
	\end{align*}
	Then Gronwall's inequality leads to
	\begin{align}
		\label{pri-first}
		&\sup_{s\in [0,T]}\|u^{\delta}(t)\|_{L^{2p}(\Omega;H)}^{2p}
		+\Big\|\int_0^T\|\nabla u^{\delta}(s)\|^2ds\Big\|_{L^{p}(\Omega ;\mathbb R)}^p\\\nonumber
		&+\Big\|\int_0^T\<\ln(\frac {(u^{\delta}+1)^2+\delta}{(u^{\delta}-1)^2+\delta}),u^{\delta}(s)\>ds\Big\|_{L^p(\Omega;\mathbb R)}^p
		\le C(p,c,T)\|u(0)\|_{L^{2p}(\Omega;H)}^{2p}.
	\end{align}
	We proceed to estimating the bound of the modified energy in Case 1 and Case 2.
	By applying the Ito's formula to $H_{\delta}(u^\delta(t))$, we have that
\begin{align*}
		H_{\delta}(u^{\delta}(t))=& H_{\delta}(u^{\delta}(0))-\int_0^t \|A u^{\delta}(s)-(F_{\log}^{\delta})'(u^{\delta }(s))\|^2ds\\
		&+\int_0^t\<(F_{log}^{\delta})'(u^{\delta}(s)), \epsilon B(u^{\delta}(s))dW(s)\>+\epsilon \int_0^t\<\nabla  u^{\delta}(s), \nabla B(u^{\delta}(s))dW(s)\>\\\nonumber
		&+ \frac 12\int_0^t\sum_{j\in \mathbb N} \epsilon^2 \<\nabla  (B(u^{\delta}(s)) e_j), \nabla( B(u^{\delta}(s)) e_j)\> ds\\\nonumber
		&+ \frac 12\int_0^t \epsilon^2 \sum_{j\in \mathbb  N}\<(F_{log}^{\delta})''(u^{\delta}(s))  B(u^{\delta}(s))e_j,  B(u^{\delta}(s))e_j\>ds.
	\end{align*}
	In Case 1,  taking $p$th moment yields that
	\begin{align*}
		& \| H_{\delta}(u^{\delta}(t))\|_{L^p(\Omega;\mathbb R)}^p
		+\Big\|\int_0^t \|A u^{\delta}(s)-(F_{\log}^{\delta})'(u^{\delta }(s))\|^2ds\Big\|_{L^p(\Omega;\mathbb R)}^p\\
		&\le C(p,T) \| H_{\delta}(u^{\delta}(0))\|_{L^p(\Omega;\mathbb R)}^p+ C(p,T)\epsilon^{2p} \int_0^t \|\nabla B(u^{\delta}(s))\|_{L^{2p}(\Omega;{\mathcal L_2(H)})}^{2p}ds
		\\
		&+C(p,T)   \int_0^t \frac {\epsilon^{2p}}{\delta^p}\|B(u^{\delta}(s))\|_{L^{2p}(\Omega;{\mathcal L_2(H)})}^{2p}ds.
	\end{align*}
	This, together with \eqref{lip-hs} and \eqref{pri-first}, implies that for $\epsilon\sim O(\delta^{\frac 12}),$
	it holds that
	\begin{align*}
		\sup_{t\in [0,T]}\|H_{\delta}(u^{\delta}(t))\|_{L^p(\Omega;\mathbb R)}\le C(p,T,u(0)).
	\end{align*}
	Similarly, in Case 2 using the fact that $h_i(\pm 1)=0$ and $h_i\in W_0^{\bs+1,\infty}[-1,1]$,  it holds that
	\begin{align*}
		& \| H_{\delta}(u^{\delta}(t))\|_{L^p(\Omega;\mathbb R)}^p
		+\Big\|\int_0^t \|A u^{\delta}(s)-(F_{\log}^{\delta})'(u^{\delta }(s))\|^2ds\Big\|_{L^p(\Omega;\mathbb R)}^p\\
		&\le C(p,T) \| H_{\delta}(u^{\delta}(0))\|_{L^p(\Omega;\mathbb R)}^p+ C(p,T,u(0))\epsilon^{2p}.
	\end{align*}
	Combining the above estimates, we complete the proof.
\end{proof}
As a consequence of Proposition \ref{prop-wel}, the averaged evolution law of the modified energy holds,
\begin{align}\label{cor-ene-evo}
	\E[H_{\delta}(u^{\delta}(t))]
	&=\E[H_{\delta}(u^{\delta}(0))]
	- \int_0^t \E[\|A u^{\delta}(s)-(F_{\log}^{\delta})'(u^{\delta }(s))\|^2]ds \\\nonumber
	&+ \frac 12 \epsilon^2\int_{0}^t \E [\|\nabla B(u^{\delta}(s))\|_{\mathcal L_2(H)}^2 ] ds\\\nonumber
	&+ \frac 12 \epsilon^2 \int_{0}^t \sum_{j\in \mathbb N} \E [\<(F_{log}^{\delta})''(u^{\delta}(s)) B(u^{\delta}(s))e_j,  B(u^{\delta}(s))e_j\>] ds.
\end{align}
It should be remarked that the energy regularization may be not available for  the SPDE with logarithmic potential driven by  the space-time white noise. This will be studied in the future.
The following is devoted to the moment bounds of the solution in the Sobolev spaces.

\begin{prop}\label{prop-h1}
	Let Setting \ref{set-1} hold with some $\bs\ge 1$. In addition assume that $\epsilon \sim O(\delta^{\frac 12})$ in Case 1. Then it holds that for $p\ge 1,$
	\begin{align*}
		\sup_{t\in [0,T]}\|u^{\delta}(t)\|_{L^{2p}(\Omega;H^1)} \le C(p,T,u(0)).
	\end{align*}
\end{prop}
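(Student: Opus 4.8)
The plan is to deduce the $H^1$-moment bound directly from the modified-energy estimate in Proposition~\ref{prop-wel}, the only extra ingredient being a sharp, $\delta$-uniform lower bound on the regularized convex part $F_{\log,1}^\delta$ of the potential. (One could instead run a fresh It\^o computation for $\|\nabla u^\delta(t)\|^2$, but this is unnecessary and heavier.) First I would show that $F_{\log,1}^\delta(\xi)\ge 0$ for every $\xi\in\mathbb R$ and every $\delta>0$: from the displayed expression for $(F_{\log}^\delta)'$ one gets $(F_{\log,1}^\delta)'(\xi)=\frac12\ln\big((\xi+1)^2+\delta\big)-\frac12\ln\big((\xi-1)^2+\delta\big)$, whose sign equals that of $\xi$ since $(\xi+1)^2-(\xi-1)^2=4\xi$; hence $F_{\log,1}^\delta$ decreases on $(-\infty,0]$, increases on $[0,\infty)$, and attains its minimum at $\xi=0$, where $F_{\log,1}^\delta(0)=\ln(1+\delta)+2\sqrt\delta\arctan(1/\sqrt\delta)\ge 0$. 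Consequently $F_{\log}^\delta(\xi)\ge -c\xi^2$ for all $\xi$, and therefore, for every $v\in H^1$,
\[
\frac12\|\nabla v\|^2=H_\delta(v)-\int_{\mathcal D}F_{\log}^\delta(v)\,dx\le H_\delta(v)+c\|v\|^2 .
\]

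Next I would apply this with $v=u^\delta(t)$ to get the pathwise bound $\|\nabla u^\delta(t)\|^2\le 2H_\delta(u^\delta(t))+2c\|u^\delta(t)\|^2$, and note that its right-hand side is nonnegative because $H_\delta(v)+c\|v\|^2=\frac12\|\nabla v\|^2+\int_{\mathcal D}F_{\log,1}^\delta(v)\,dx\ge 0$; hence raising to the $p$th power is harmless and
\[
\|\nabla u^\delta(t)\|^{2p}\le C_p\big(|H_\delta(u^\delta(t))|^{p}+c^{p}\|u^\delta(t)\|^{2p}\big).
\]
Taking expectations, the hypotheses of Proposition~\ref{prop-wel} are exactly the ones assumed here (namely $\bs\ge 1$, and $\epsilon\sim O(\delta^{1/2})$ in Case~1, which is precisely what makes the $H_\delta$-moment bound hold), so both $\sup_{t\in[0,T]}\E[|H_\delta(u^\delta(t))|^{p}]$ and $\sup_{t\in[0,T]}\E[\|u^\delta(t)\|^{2p}]$ are bounded by $C(p,T,u(0))$. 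This yields $\sup_{t\in[0,T]}\E[\|\nabla u^\delta(t)\|^{2p}]\le C(p,T,u(0))$, and combining with the $L^2$-moment bound via $\|v\|_{H^1}^2\le C(\|v\|^2+\|\nabla v\|^2)$ gives $\sup_{t\in[0,T]}\|u^\delta(t)\|_{L^{2p}(\Omega;H^1)}\le C(p,T,u(0))$.

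The main (and essentially only) obstacle is controlling the $\delta$-dependence: a crude lower bound on $F_{\log,1}^\delta$ would leave a constant that blows up as $\delta\to 0$, so the monotonicity argument producing the clean bound $F_{\log,1}^\delta\ge 0$ is what keeps the final constant independent of $\delta$, while the restriction $\epsilon\sim O(\delta^{1/2})$ in Case~1 plays the analogous role on the stochastic side through Proposition~\ref{prop-wel}. The remaining manipulations---Young's inequality, the norm equivalence, and the passage to $p$th moments---are routine and require no new ideas.
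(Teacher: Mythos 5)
Your proposal is correct and follows essentially the same route as the paper: both deduce the $H^1$ bound from Proposition \ref{prop-wel} by writing $\tfrac12\|\nabla u^{\delta}\|^2$ as $H_{\delta}(u^{\delta})$ minus the potential integral and controlling that integral, $\delta$-uniformly, by $C(1+\|u^{\delta}\|^2)$. The only difference is cosmetic: the paper bounds $\big|\int_{\mathcal D}F_{\log}^{\delta}(u^{\delta})\,dx\big|$ via the growth estimates $\ln\xi\le C\xi^{\theta}$, $|\ln\xi|\le C\xi^{-\theta}$, whereas you use the one-sided bound $F_{\log,1}^{\delta}\ge 0$ (via the sign of $(F_{\log,1}^{\delta})'$ and the value at $0$), which is a clean and equally valid way to make the same step, and also justifies passing to $p$th moments.
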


\begin{proof}
	Since $\ln \xi\le C \xi^{\theta}$ if $\xi \ge 1$ and $|\ln \xi| \le C \xi^{-\theta}$ if $0<\xi<1$  for $\theta>0$, it holds that
	\begin{align*}
		&|\int_{\mathcal D}F_{log}^{\delta}(u^{\delta}(t)) dx|\\
		&\le \Big|\int_{\mathcal D}\frac 1 2 (u^{\delta}(t)+1)\ln((u^{\delta}(t)+1)^2 +\delta)
		-(\frac 12 (u^{\delta}(t)-1)\ln((u^{\delta}(t)-1)^2+\delta) dx\Big|\\
		&+ \Big|\int_{\mathcal D}\sqrt{\delta} \arctan( \frac {u^{\delta}(t)+1}{\sqrt{\delta}})
		-\sqrt{\delta} \arctan (\frac {u^{\delta}(t)-1}{\sqrt{\delta}}))dx \Big|+c \|u^{\delta}(t)\|^2.
	\end{align*}
	The boundedness of $H_{\delta}(u^{\delta}(t))$ and $\|u^{\delta}(t)\|$ in Proposition \ref{prop-wel} implies the desired result.
\end{proof}

\begin{lm}\label{lm-con-h2}
	Let Setting \ref{set-1} hold with $\bs\ge {1+\beta}$, where $\beta \in [0,1)$. Then it holds that for $p\ge 2,$
	\begin{align*}
		\|u^{\delta}(t)\|_{L^p(\Omega;H^{\beta+1})}&\le C(p,T,u(0))|\ln(\delta)|,\\
		\|u^{\delta}(t)-u^{\delta}(s)\|_{L^p(\Omega;H)} &\le C(p,T,u(0))\Big(|\ln(\delta)|(t-s)^{\frac {\beta+1}2}+\epsilon(t-s)^{\frac 12}\Big).
	\end{align*}
\end{lm}

\begin{proof}
	Applying the property that $\|S(t)(-A)^{\alpha}v\|\le C (1+t^{- \alpha })\|v\|_{H^{2\alpha}},$ $\alpha>0$ (see e.g. \cite{Kru14a}),  we have that
	\begin{align*}
		\|(-A)^{\frac {\beta+1}2}u^{\delta}(t)\|
		&\le \|(-A)^{\frac {\beta+1}2}S(t)u(0)\|
		+\|\int_{0}^t (-A)^{\frac {\beta+1}2}S(t-s)(F_{\log}^{\delta})'(u^{\delta}(s))ds\|\\
		&+ \|\int_{0}^t (-A)^{\frac {\beta+1}2}S(t-s)\epsilon B(u^{\delta}(s))dW(s)\|\\
		&\le C\|u(0)\|_{H^{\beta+1}}+ C\int_{0}^t (t-s)^{-\frac {\beta+1}2}(1+|\ln(\delta)|+\|u^{\delta}(s)\|^2) ds\\
		&+\|\int_{0}^t (-A)^{\frac {\beta+1}2}S(t-s)\epsilon B(u^{\delta}(s))dW(s)\|.
	\end{align*}
	Therefore, it suffices to bound the stochastic integral term. According to Burkholder's inequality, Minkowski inequality and \eqref{lip-hs}, we obtain
	that for any $p\ge 2$,
	\begin{align*}
		&\Big\|\int_{0}^t (-A)^{\frac {\beta+1}2}S(t-s)\epsilon B(u^{\delta}(s))dW(s)\Big\|_{L^p(\Omega; H)}\\
		&\le C\epsilon \Big(\int_{0}^t \Big(\E \Big[\Big(\sum_{i\in \mathbb N}\|S(t-s)(-A)^{\frac {\beta+1}2} B(u^{\delta}(s))e_i\|^2\Big)^{\frac p2}\Big]\Big)^{\frac 2p} ds\Big)^{\frac 12}\\
		&\le C\epsilon \Big(\int_0^t (t-s)^{-\beta} (\E[\|(-A)^{\frac 12}B(u^{\delta}(s))\|^{p}_{\mathcal L_2(H)}])^{\frac 2p}ds\Big)^{\frac 12}\le C(T,u(0),p)\epsilon.
	\end{align*}
	As a consequence, it holds that
	\begin{align*}
		\|(-A)^{\frac {\beta+1}2}u^{\delta}(t)\|_{L^p(\Omega;H)}&\le C\|u(0)\|_{H^{\beta+1}}+  C(T,u(0),p) (|\ln(\delta)|+\epsilon).
	\end{align*}
	Using the property that
	$\|(S(t)-I)v\|\le Ct^{\alpha}\|v\|_{H^{2\alpha}}$ with $\alpha\in [0,1]$ (see e.g. \cite{Kru14a}),  Burkholder's inequality and Proposition \ref{prop-wel}, it follows that for $0\le s\le t\le T,$
	\begin{align*}
		&\|u^{\delta}(t)-u^{\delta}(s)\|_{L^p(\Omega;H)}\\
		\le&
		\Big\|\int_s^t S(t-r) (F_{\log}^{\delta})'(u^{\delta}(r))dr\Big\|_{L^p(\Omega;H)}\\
		&+ \Big\|\int_0^s (S(t-r)-S(s-r)) (F_{\log}^{\delta})'(u^{\delta}(r))dr\Big\|_{L^p(\Omega;H)}\\
		&+ \Big\|\int_s^t S(t-r)\epsilon B(u^{\delta}(r))dW(r)\Big\|_{L^p(\Omega;H)}\\
		&+\Big\|\int_0^s (S(t-r)-S(s-r)) \epsilon B(u^{\delta}(r))dW(r)\Big\|_{L^p(\Omega;H)}\\
		\le& C(T,u(0),p)|\ln(\delta)|((t-s)+(t-s)^{\frac {\beta+1}2})+C(T,u(0),p)\epsilon(t-s)^{\frac 12}.
	\end{align*}
	Combining the above estimates, we complete the proof.
\end{proof}\subsection{Limit of regularized SPDE with logarithmic potential}
Now, we are in a position to present the strong convergence of the regularized models. More precisely, we prove that
in Case 1 ($\epsilon \to 0,\delta \to0$), the limit equation of \eqref{log-reg-sac} is
\begin{align}\label{det-sac}
	du_{det}(t)-A u_{det}(t)dt+(F_{\log})'(u_{det}(t))dt=0, \;
\end{align}
while in Case 2 ($\epsilon\sim O(1),\delta\to0$), the limit equation is
\begin{align}\label{sto-sac}
	du(t)-A u(t)dt+(F_{\log})'(u(t))dt=\epsilon B(u(t))dW(t).
\end{align}The well-posedness of \eqref{sto-sac} has been shown in \cite{MR4151195}.
Our strategy is constructing a sequence of solutions $u^{\delta}$ and showing its convergence (see, e.g., \cite{C2020}).
The key step of our approach lies on the upper bound estimate of the probability that the solution escapes the Stampacchia maximum bound, i.e., $u^{\delta}(t,x)\notin [-1,1]$.

To this end, we first show the tail estimate of $u^{\delta},$ which indicates that $(1-u^{\delta})^{-}$ and $(u^{\delta}+1)^{-}$ are small in the sense of $L^2(\mathcal O),$
by  setting  up the It\^o formula of some suitable functionals as in
\cite{MR3427686}.
We define the upper bound functional  $\int_{\mathcal D}|f^1(v(\xi))|^2d\xi:=\int_{\mathcal D} |(1-v(\xi))^-|^2 d\xi$ and the lower bound functional $\int_{\mathcal D}|f^2(v(\xi))|^2d\xi:=\int_{\mathcal D} |(v(\xi)+1)^{-}|^2 d\xi$.
Let $\kappa$ be a small positive parameter.
Define the regularization approximations of $f^j$ by  $f_{\kappa}^{j},j=1,2$ such that $f_{\kappa}^j\in \mathcal C^2(\mathbb R)$ satisfying
\begin{align}\label{con-reg}
	&f_{\kappa}^j(\xi)=0, \; \xi\in [-1,1], \; f_{\kappa}^{j}(\xi)=0 \; \text{otherwise}, \\\nonumber
	&|f_{\kappa}^{j}(\xi)|\le c_1(1+|\xi|), \; |(f_{\kappa}^{j})'(\xi)|\le c_1, \\\nonumber
	& |(f_{\kappa}^{j})''(\xi)|\le c_1, |f^{j}_{\kappa}(\xi)(f_{\kappa}^{j})'(\xi) |\le c_1.
\end{align}
Here $c_1$ is some positive constant.
We in addition suppose that
\begin{align*}
	f^1_{\kappa}(\xi)= \xi-1, \;\text {if} \;  1-\xi<-\kappa,\;
	f^1_{\kappa}(\xi)=0, \; \text{if} \; 1-\xi\ge 0.
\end{align*}
and
\begin{align*}
	f^2_{\kappa}(\xi)= -\xi-1, \;\text {if} \;  \xi+1<-\kappa,\;
	f^2_{\kappa}(\xi)=0, \; \text{if} \; \xi+1\ge  0.
\end{align*}
It can be seen that the set of $f^j_{\kappa}$ is not empty by using a $C^2$ interpolation (see, e.g., \cite{MR3427686}). Below is one concrete example.
\begin{ex}
	\begin{equation}
		f_{\kappa}^1(\xi)=\left\{ \begin{aligned}
			&\xi-1 &\; \text{if}\; 1-\xi <-\kappa, \\
			&-\frac {3}{\kappa^4}(1-\xi)^5-\frac {8}{\kappa^3}(1-\xi)^4-\frac 6 {\kappa^2} (1-\xi)^3 & \; \text{if}\; -\kappa \le 1-\xi<0,\\
			&0 & \; \text{if} \; 1-\xi \ge 0,
		\end{aligned}
		\right.
	\end{equation}
	\begin{equation}
		f_{\kappa}^2(\xi)=\left\{ \begin{aligned}
			&-(\xi+1) &\; \text{if}\; \xi+1<-\kappa, \\
			&-\frac {3}{\kappa^4}(\xi+1)^5-\frac {8}{\kappa^3}(\xi+1)^4-\frac 6 {\kappa^2} (\xi+1)^3 & \; \text{if}\; -\kappa \le  \xi+1< 0,\\
			&0 & \; \text{if} \; \xi+1\ge 0.
		\end{aligned}
		\right.
	\end{equation}
\end{ex}

\begin{lm}\label{lm-tail}
	Let Setting \ref{set-1} hold with some $\bs>\frac d2$. Then the following tail estimates holds,
	\begin{align*}
		&\mathbb P(\sup_{s\in[0,T]}|u^{\delta}(s)|_E>1+\delta_0)\to 0, \; \text{as} \;  \epsilon  \sim o(\delta_0)\to 0
	\end{align*}
	in Case 1, and
	\begin{align*}
		\mathbb P(\sup_{s\in [0,T]} |u^{\delta}(s)|_E>1)=0.
	\end{align*}
	in Case 2.
\end{lm}

\begin{proof}

	
	Since the derivative of $F_{\log}^{\delta}$ is finite, one can show that $u^{\delta}\in E, a.s.$ by a standard procedure (see, e.g., \cite{BCH18,CH18}). However, its $E$-norm may be not uniformly bounded with respect to $\delta$ by this argument.
	Instead,
	we will consider the evolution of $\|f^1_{\kappa}(u(t))\|^2$ and $\|f^2_{\kappa}(u(t))\|^2$, and then take limits on $\kappa.$
	Let us illustrate the procedures to estimate  $\|f^1(u(t))\|^2$ since that of the case $\|f^2(u(t))\|^2$ is similar.
	
	Applying the It\^o formula  to $\frac 12 \|f^1_{\kappa}(u(t))\|^2$, we obtain
	\begin{align*}
		&\frac 12\|f^1_{\kappa}(u^{\delta}(t))\|^2\\
		&~= \frac 12\|f^1_{\kappa}(u(0))\|^2
		+  \int_0^t \<(f^1_{\kappa})'(u^{\delta}) f^1_{\kappa}(u^{\delta}), Au^{\delta}(s)-(F_{log}^{\delta})'(u^{\delta}(s))\> ds\\
		&~~+\int_{0}^t  \epsilon  \<(f^1_{\kappa})'(u^{\delta}(s))f^1_{\kappa}(u^{\delta}(s)),B(u^{\delta}(s))dW(s)\>\\
		&~~+\int_0^t \frac 12 \epsilon^2 \sum_{j\in \mathbb N}\<(f^1_{\kappa})''(u^{\delta}(s))f_{\kappa}^1(u^{\delta}(s))+|(f^1_{\kappa})'(u^{\delta}(s))|^2)B(u^{\delta}(s))e_j,B(u^{\delta}(s))e_j\>ds.
	\end{align*}
	Taking $\kappa \to 0$, thanks to Propositions \ref{prop-wel}, \eqref{con-reg} and \eqref{prop-h1}, we have that
	\begin{align*}
		&\frac 12\|f^1(u^{\delta}(t))\|^2\\
		&~= \frac 12\|f^1(u(0))\|^2
		-\int_0^t \<\mathbb I_{\{u^{\delta}(s)>1\}} (u^{\delta}(s)-1),(F_{log}^{\delta})'(u^{\delta}(s))\> ds\\
		&~~- \int_0^t \|\mathbb I_{\{u^{\delta}(s)>1\}} \nabla u^{\delta}(s)\|^2ds+\int_{0}^t  \epsilon  \<\mathbb I_{\{u^{\delta}(s)>1\}}(u^{\delta}(s)-1),B(u^{\delta}(s))dW(s)\>\\
		&~~+\int_0^t \frac 12 \epsilon^2 \sum_{j\in \mathbb N}  \<\mathbb I_{\{u^{\delta}(s)>1\}}  B(u^{\delta}(s))e_j,B(u^{\delta}(s))e_j\>ds.
	\end{align*}
	Taking supreme over $t\in [0,t_1]$ for $t_1\le T$,  then
	taking $p$th moment and using Burkerhold's inequality, we obtain that
	\begin{align*}
		&\E [\sup_{t\in [0,t_1]}\|f^1(u^{\delta}(t))\|^{2p}]+\E \Big[\Big(\int_0^{t_1} \mathbb I_{\{u^{\delta}(s)>1\}}\|\nabla u^{\delta}(s)\|^2ds\Big)^p\Big]\\
		&+ \E \Big[\Big(\int_0^{t_1} \mathbb I_{\{u^{\delta}(s)>1\}}\<(u^{\delta}(s)-1),(F_{log,1}^{\delta})'(u^{\delta}(s))\> ds\Big)^p\Big]\\
		&\le C(p) \E [\|f^1(u^{\delta}(0))\|^{2p}]
		+ C(p,T)\epsilon^{2p}\int_0^{t_1} (1+\E[\|B(u^{\delta}(s)))\mathbb I_{\{u^{\delta}(s)>1\}}\|_{\mathcal L_2(H)}^{2p} ]) ds.
	\end{align*}
	As a consequence, it holds that in Case 1,
	\begin{align}\label{pri-case1}
		\E [\sup_{t\in [0,T]}\|f^1(u^{\delta}(t))\|^{2p}]\le C(T,p)(\|f^1(u(0))\|^{2p}+\epsilon^{2p}\|u(0)\|^{2p})
	\end{align}
	and that in Case 2,
	\begin{align}\label{pri-case2}
		\E [\sup_{t\in [0,T]}\|f^1(u^{\delta}(t))\|^{2p}]\le C(T,p)(\|f^1(u(0))\|^{2p}).
	\end{align}
	By repeating the above procedures to $f^2$ and using the fact that $f^j(u(0))=0,j=1,2$, it is not hard to see that in Case 2,
	$$\mathbb P(\sup_{s\in [0,T]} |u^{\delta}(s)|_E>1)=0.$$
	Let $\delta_0>0$ be a small number. Denote $\mathcal A_{s}=\{x\in\mathcal D | u^{\delta}(s)>1+\delta_0\}$, $\mathcal B_s=\{x\in\mathcal D | u^{\delta}(s)<-1-\delta_0\}.$
	In Case 1, notice that for any $s\in [0,T]$,
	\begin{align}\label{cor-use}
		|\mathcal A_s|+|\mathcal B_s|\le \frac {\|f^1(u^{\delta}(s))\|^2+\|f^2(u^{\delta}(s))\|}{\delta_0^2}\le \eta_s \frac {\epsilon^2}{\delta_0^2},
	\end{align}
	where $\eta_s$ is a positive stochastic process with any finite $p$th moment.
	As a consequence, the Lebesgue measures  $|\mathcal A_s|,|\mathcal B_s| \to 0, a.s.$ as $\epsilon \sim o(\delta_0)\to 0,$
	which completes the proof.
\end{proof} Next, we provide the strong convergence analysis of \eqref{log-reg-sac}.

\begin{tm}\label{tm-con1}
	Let Setting \ref{set-1} hold with some $\bs>\frac d2$, $p\ge 1$.  It holds that in Case 1,
	\begin{align*}
		\E[\|u^{\delta}(t)-u_{\det}(t)\|^{2p}]\le C(\delta^2+\epsilon^2+(1+\ln^2(\delta)) \frac {\epsilon^2}{\delta_0^2})^p,
	\end{align*}
	where $\delta_0\sim O(\delta)$ is the small parameter in Lemma \ref{lm-tail}, and that in Case 2,
	\begin{align*}
		\E[\|u^{\delta}(t)-u(t)\|^{2p}]\le C\delta^{2p},
	\end{align*}
\end{tm}

\begin{proof}
	We only present the details of $p=1$ for simplicity.
	We first show the convergence in Case 1.
	Denote by $u_{det}(t)$ the exact solution of the deterministic Allen-Cahn equation with $F_{log}$, i.e.,
	\begin{align*}
		du_{det}(t)-A u_{det}(t)dt+F'_{log}(u_{det}(t))dt=0, \;
	\end{align*}
	which
	satisfies the  maximum bound principle
	\begin{align*}
		|u_{\det}(t,x)|< 1, \;\text{if}\;  |u_{\det}(0,x)|< 1,
	\end{align*}
	and $u(t)\in  H^{\bs}(\mathcal D)$ if $u(0)\in H^{\bs}(\mathcal D)$. Considering the difference between $u^{\delta}$ and $u$,  taking expectation and applying Burkholder's inequality , we have that
	\begin{align*}
		&\frac 12 \E [\|u^{\delta}(t)-u_{det}(t)\|^2]+\int_0^t\E[ \|\nabla (u^{\delta}(s)-u_{det}(s))\|^2]ds \\
		&=-\int_0^t \E [\<(F_{\log}^{\delta})'(u^{\delta}(s))- (F_{\log})'(u_{det}(s)), u^{\delta}(s)-u_{det}(s)\>] ds\\
		&+\int_0^t \frac {\epsilon^2}2 \E [\| B(u^{\delta }(s)) \|_{\mathcal L_2(H)}^2] ds+ \epsilon \E [\int_0^t  \<u_{det}(s)-u^{\delta}(s),B(u^{\delta}(s))dW_s\>]\\
		&\le -\int_0^t \E [\int_{\mathcal A_s\cup \mathcal B_s} (F_{\log}^{\delta})'(u^{\delta}(s))- (F_{\log})'(u_{det}(s)))(u^{\delta}(s)-u_{det}(s)) dx] ds\\
		& -\int_0^t \E [\int_{\mathcal A_s^c\cap \mathcal B_s^c}((F_{\log}^{\delta})'(u^{\delta}(s))- (F_{\log})'(u_{det}(s)))(u^{\delta}(s)-u_{det}(s))dx] ds\\
		&+ \int_0^t \frac {\epsilon^2} 2\E [\| B(u^{\delta }(s))\|_{\mathcal L_2(H)}^2] ds=:
		II_1+II_2+II_3.
	\end{align*}
	Since $II_3\sim O(\epsilon^2),$ it suffices to estimate $II_1$-$II_2.$
	Thanks to Proposition \ref{prop-wel} and the proof of Lemma \ref{lm-tail}, applying H\"older's inequality, it holds that
	\begin{align*}
		|II_1|
		&\le C \int_0^t\E [\|u^{\delta}(s)-u_{det}(s)\|^2] ds\\
		&+\int_{0}^t \E\Big[\|(u^{\delta}(s)-u_{det}(s))\| \sqrt{\int_{\mathcal A_s\cup \mathcal B_s}|(F_{\log}^{\delta})'(u^{\delta}(s))- (F_{\log})'(u_{det}(s)))|^2dx }\Big]ds\\
		&\le C \int_0^t\E [\|u^{\delta}(s)-u_{det}(s)\|^2] ds+ C(T)(1+\ln^2(\delta)) \frac {\epsilon^2}{\delta_0^2}.
	\end{align*}
	Thanks to the fact that
	\begin{align*}
		(F_{\log}^{\delta})'(\xi_1)- (F_{\log}^{\delta})'(\xi_2)&=\frac {2(1-\chi^2)+2\delta }{((1+\chi)^2+\delta)((1-\chi)^2+\delta)}(\xi_1-\xi_2),
	\end{align*}
	where $\chi=\theta \xi_1+(1-\theta)\xi_2$ for some $\theta\in (0,1)$, it follows that if $|\chi|\in [1,1+\delta_0]$, then
	\begin{align*}
		|2(1-\chi^2)|\le 2((1+\delta_0)^2-1)\le 4\delta_0+2\delta_0^2.
	\end{align*}
	In the following, we take $\delta_0\sim O(\delta).$
	Notice that
	\begin{align*}
		II_2&\le C \int_0^t\E[\|u^{\delta}(s)-u_{det}(s)\|^2] ds\\
		&-\int_0^t \E [\int_{\mathcal A_s^c\cap \mathcal B_s^c} ((F_{\log,1}^{\delta})'(u^{\delta}(s))- (F_{\log,1}^{\delta})'(u_{det}(s)))(u^{\delta}(s)-u_{det}(s)) dx] ds \\
		&-\int_0^t \E [\int_{\mathcal A_s^c\cap \mathcal B_s^c} ((F_{\log,1}^{\delta})'(u_{det}(s))- ((F_{\log,1})'(u_{det}(s)))(u^{\delta}(s)-u_{det}(s)) dx] ds,
	\end{align*}
	where $(F_{\log,1})'(\xi):=\ln(1+\xi)-\ln(1-\xi).$
	Therefore, we have that
	\begin{align*}
		II_2&\le C \int_0^t \E[\|u^{\delta}(s)-u_{det}(s)\|^2] ds  +C\delta^2.
	\end{align*}
	Combining the above estimates with Gronwall's inequality yields that
	\begin{align*}
		\E[\|u^{\delta}(t)-u_{det}(t)\|^2]\le C(\delta^2+\epsilon^2+(1+\ln^2(\delta)) \frac {\epsilon^2}{\delta_0^2}).
	\end{align*}
	Similar steps as in Case 1, together with Lemma \ref{lm-tail},  yield the desired result in Case 2.
\end{proof}

To end this section, we show the random effect on the behaviors of energy thanks to  Proposition \ref{lm-con-h2} and Theorem \ref{tm-con1}.

\begin{prop}\label{prop-con}
	Let Setting \ref{set-1} hold with some $\bs=1+\beta>\frac d2$, $\beta\in (0,1)$ and $\delta_0\sim O(\delta)$. Then it holds that in Case 1, for $\epsilon \sim o(\delta_0),$
	\begin{align}
		\E[|H(u^{\delta}(t))-H(u_{\det}(t))|]&\le C(\delta+\epsilon+|\ln(\delta)|\frac {\epsilon}{\delta_0})^{\frac {(\bs-1)}{\bs}}|\ln(\delta)|,
	\end{align}
	and that in Case 2,
	\begin{align}
		\E[|H(u^{\delta}(t))-H(u(t))|] &\le C\delta^{\frac {(\bs-1)}{\bs}}|\ln(\delta)|.
	\end{align}
\end{prop}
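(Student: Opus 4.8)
The plan is to split the energy difference into a gradient part and a Flory--Huggins part, and to estimate each by feeding the $L^2(\mathcal D)$ strong convergence rate of Theorem~\ref{tm-con1} into the (logarithmically degenerating) Sobolev a priori bounds of Lemma~\ref{lm-con-h2} and Proposition~\ref{prop-h1}, using Lemma~\ref{lm-tail} only to make sense of $H(u^\delta)$ when $u^\delta$ slightly leaves the maximum bound. Write $H(v)=\tfrac12\|\nabla v\|^2+\int_{\mathcal D}F_{\log}(v)\,dx$ and $H_\delta(v)=\tfrac12\|\nabla v\|^2+\int_{\mathcal D}F_{\log}^\delta(v)\,dx$. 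From the explicit formula for $F_{\log}^\delta$ one checks $\sup_{|\xi|\le 1}|F_{\log}^\delta(\xi)-F_{\log}(\xi)|\le C\sqrt\delta$; combined with Lemma~\ref{lm-tail} (which confines $\{\,|u^\delta|_E>1+\delta_0\,\}$, $\delta_0\sim\delta$, to an event of vanishing probability, and on whose complement a fixed extension of $F_{\log}$ still agrees with $F_{\log}^\delta$ up to $O(\sqrt\delta)$), this reduces the claim to estimating $\E|H_\delta(u^\delta(t))-H(u_{\det}(t))|$. I would then decompose
\[
H_\delta(u^\delta(t))-H(u_{\det}(t))=\underbrace{\tfrac12\big(\|\nabla u^\delta(t)\|^2-\|\nabla u_{\det}(t)\|^2\big)}_{=:I}+\underbrace{\int_{\mathcal D}\!\big(F_{\log}^\delta(u^\delta(t))-F_{\log}^\delta(u_{\det}(t))\big)dx}_{=:II}+\underbrace{\int_{\mathcal D}\!\big(F_{\log}^\delta(u_{\det}(t))-F_{\log}(u_{\det}(t))\big)dx}_{=:III}.
\]

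The two potential terms are easy. Term $III$: since $u_{\det}(t,x)\in(-1,1)$ by the maximum bound principle, $|III|\le|\mathcal D|\sup_{|\xi|\le1}|F_{\log}^\delta(\xi)-F_{\log}(\xi)|\le C\sqrt\delta$. Term $II$: the bound $\|(F_{\log}^\delta)'\|_{L^\infty(\mathbb R)}\le C|\ln\delta|$ noted after \eqref{log-reg-sac} makes $F_{\log}^\delta$ globally $C|\ln\delta|$-Lipschitz, so $|II|\le C|\ln\delta|\,\|u^\delta(t)-u_{\det}(t)\|_{L^1(\mathcal D)}\le C|\ln\delta|\,\|u^\delta(t)-u_{\det}(t)\|$; taking expectation, Cauchy--Schwarz in $\Omega$ and Theorem~\ref{tm-con1} give $\E|II|\le C|\ln\delta|\big(\delta+\epsilon+|\ln\delta|\tfrac\epsilon{\delta_0}\big)$ in Case~1 (resp. $C\delta|\ln\delta|$ in Case~2), which is already inside the asserted bound because the base quantity tends to $0$ and $\tfrac{\bs-1}{\bs}\in(0,1)$, so a power $\tfrac{\bs-1}{\bs}\le1$ only enlarges it.

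The genuine work is term $I$. Write $I=\tfrac12\<\nabla(u^\delta(t)-u_{\det}(t)),\nabla(u^\delta(t)+u_{\det}(t))\>$, so $|I|\le\tfrac12\|u^\delta(t)-u_{\det}(t)\|_{H^1}\|u^\delta(t)+u_{\det}(t)\|_{H^1}$. Since only an $L^2(\mathcal D)$ rate is available for the difference, I would interpolate $\|w\|_{H^1}\le C\|w\|^{1-1/\bs}\|w\|_{H^{\bs}}^{1/\bs}$ with $\bs=1+\beta$ for $w=u^\delta(t)-u_{\det}(t)$: Lemma~\ref{lm-con-h2} together with the $H^{\bs}$-regularity of $u_{\det}$ bounds $\|w\|_{H^{\bs}}$ in every $L^q(\Omega)$ by $C|\ln\delta|$, Theorem~\ref{tm-con1} bounds $\|w\|$ in $L^2(\Omega)$ by $C(\delta+\epsilon+|\ln\delta|\tfrac\epsilon{\delta_0})$ (resp. $C\delta$), and Proposition~\ref{prop-h1} bounds $\|u^\delta(t)+u_{\det}(t)\|_{H^1}$ in every $L^q(\Omega)$ by a constant. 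A three-factor Hölder inequality in $\Omega$, choosing the exponent on $\|w\|$ equal to $2$, then yields
\[
\E|I|\le C\big(\E\|w\|^2\big)^{\frac{\bs-1}{2\bs}}|\ln\delta|^{1/\bs}\le C\big(\delta+\epsilon+|\ln\delta|\tfrac\epsilon{\delta_0}\big)^{\frac{\bs-1}{\bs}}|\ln\delta|
\]
in Case~1 (resp. $C\delta^{\frac{\bs-1}{\bs}}|\ln\delta|$ in Case~2), where we used $|\ln\delta|^{1/\bs}\le|\ln\delta|$. Adding the contributions of $I$, $II$, $III$ and the $O(\sqrt\delta)$ reduction term gives the two stated estimates.

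\textbf{Main obstacle.} The delicate point is term $I$: one must upgrade the $L^2(\mathcal D)$ convergence rate to an $H^1(\mathcal D)$ rate, and the only higher-order Sobolev control on $u^\delta$ (Lemma~\ref{lm-con-h2}) degrades like $|\ln\delta|$ as $\delta\to0$, so the interpolation necessarily costs the reduced exponent $\tfrac{\bs-1}{\bs}$ and an extra factor $|\ln\delta|$ — this is exactly the shape appearing in the proposition. A secondary technical nuisance is assigning a meaning to $H(u^\delta)$ in Case~1 while $u^\delta$ may marginally exceed the Stampacchia bound; this is handled by Lemma~\ref{lm-tail}, which localizes the excess to a set of vanishing probability on which $F_{\log}^\delta$ and a fixed extension of $F_{\log}$ differ by only $O(\sqrt\delta)$, hence contributes nothing at the order claimed.
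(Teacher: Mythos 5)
Your proposal is correct and follows essentially the same route as the paper's proof: the gradient part is handled by the interpolation $\|w\|_{H^1}\le C\|w\|^{\frac{\bs-1}{\bs}}\|w\|_{H^{\bs}}^{\frac1{\bs}}$ fed with the $L^2$ rate of Theorem \ref{tm-con1} and the $|\ln(\delta)|$-sized $H^{1+\beta}$ bounds of Lemma \ref{lm-con-h2}, while the potential part uses the global $C|\ln(\delta)|$-Lipschitz bound on $F_{\log}^{\delta}$ together with the $O(\sqrt{\delta})$ (the paper uses $O(\delta)$) closeness of $F_{\log}^{\delta}$ to $F_{\log}$ on $[-1,1]$. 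Your additional reduction of $H(u^{\delta})$ to $H_{\delta}(u^{\delta})$ via Lemma \ref{lm-tail} is a harmless refinement making explicit a point the paper leaves implicit, and does not change the argument.
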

\begin{proof}
	Using the Sobolev interpolation inequality 
 $$\|(-A)^{\frac 12} v\|\le C\|v\|^{\frac {\bs-1}{\bs}}\|(-A)^{\frac \bs 2} v\|^{\frac {1}{\bs}}, $$ and applying Proposition \ref{lm-con-h2} and Theorem \ref{tm-con1}, it follows that in Case 1,
	\begin{align*}
		\|\nabla (u^{\delta}(t)-u_{\det}(t))\|_{L^2(\Omega; H)}
		&\le C(\delta+\epsilon+|\ln(\delta)|\frac {\epsilon}{\delta_0})^{\frac {(\bs-1)}{\bs}}|\ln(\delta)|^{\frac 1{\bs}}.
	\end{align*}
	and that in Case 2,
	\begin{align*}
		\|\nabla (u^{\delta}(t)-u(t))\| _{L^2(\Omega; H)} &\le C\delta^{\frac {(\bs-1)}{\bs}} |\ln(\delta)|^{\frac 1{\bs}}.
	\end{align*}
	On the other hand,
	by Proposition \ref{tm-con1}, H\"older's and Young's inequalities,
	we have that for $v(t)=u(t)$ or $u_{det}(t),$
	\begin{align*}
		&\E [\|F_{\log}^{\delta}(u^{\delta}(t))-F_{\log}(v(t))\|_{L^1}]\\
		&\le \E [\|F_{\log}^{\delta}(v(t))-F_{\log}(v(t))\|_{L^1}]
		+\E [\|F_{\log}^{\delta}(u^{\delta}(t))-F_{\log}^{\delta}(v(t))\|_{L^1}]\\
		&\le C(\delta+\E [\|v(t)\|^2])
		+\E [\|F_{\log}^{\delta}(u^{\delta}(t))-F_{\log}^{\delta}(v(t))\|_{L^1}]\\
		&\le C(\delta+(1+\ln(\delta)) \sqrt{\E [\|v(t)-u^{\delta}(t)\|^2]}).
	\end{align*}
	Thus, we get that in Case 1,
	\begin{align*}
		\E [\|F_{\log}^{\delta}(u^{\delta}(t))-F_{\log}(u_{det}(t))\|_{L^1}]\le  C(\delta+(1+|\ln(\delta)|)(\epsilon+\delta+|\ln(\delta)| \frac {\epsilon}{\delta_0}) )
	\end{align*}
	and that in Case 2,
	\begin{align*}
		\E [\|F_{\log}^{\delta}(u^{\delta}(t))-F_{\log}(u(t))\|_{L^1}]\le C(\delta+(1+|\ln(\delta)|)\delta).
	\end{align*}
	Combining the above estimates, we complete the proof.
\end{proof}

\section{Numerical  approximation}
\label{sec-3}

There have been a lot of numerical results on the discretization for SPDEs with Lipschitz nonlinearities. But less attention has been paid on parabolic SPDEs with logarithmic potentials. Thanks to the regularized models in section \ref{sec-2}, we are able to do numerical analysis for \eqref{log-sac}.
Although the implicit discretization is a suitable choice of \eqref{log-sac}, it often requires to solve nonlinear algebraic equations with randomness. It is still highly desirable to develop stable numerical approximations for \eqref{log-sac} which could be solved explicitly to save computational costs.
However, due to the singularity of the drift coefficient, it is not easy to achieve this goal.  

 In this section, we propose a semi-implicit scheme for the considered model. We present a detailed analysis for the stabilized full discretization. Similar arguments are also applicable for the classical implicit full discretizations.
In the spatial direction, we take the spectral Galerkin method  for simplicity.

\subsection{Semi-implicit scheme with stabilization term}
\label{sub-sec-imp}

In order to propose a stable semi-implicit scheme, we add a small stabilization term into the numerical discretization. For convenience, let us use the following stabilized scheme with a parameter $\alpha\ge 0$,
\begin{align}\label{sta-semi}
	u_{j+1}^N=u_j^N+(A+2c)\tau u_{j+1}^N-(F^{\delta}_{\log,1})'(u_j^N)\tau-\alpha(u_{j+1}^N-u_{j}^N)\tau+\epsilon P^N B(u_j^N)\delta W_j.
\end{align}
which uses the backward Euler discretization for the linear unbounded part and the forward Euler discretization for the potential and stochastic terms.
Here $u_0^N=P^N u(0)$, $S_{\tau}=(I-A\tau)^{-1},$ $\delta W_j=W(t_{j+1})-W(t_j), t_j=j\tau$,  $j\le J$, $J\tau=T$, and
$N$ is the parameter of the spectral Galerkin method, i.e., the dimension of the projection on to the span of first $N+1$ eigenspace.
Note that the above full discretization of \eqref{log-sac} can be rewritten as
\begin{align}\label{imp-euler}
	u^N_{j+1}=S_{\tau}\Big(u^N_{j}-P^N(F_{\log,1}^{\delta})'(u^{N}_{j})\tau-(\alpha-2c)\tau u_{j+1}^N+\alpha u_j^N\tau+P^N\epsilon B(u_j^N) \delta W_j\Big).
\end{align}

The key steps to derive the convergence of stable numerical scheme lies on the regularity estimates in time and space.

\begin{lm}\label{dis-h1}
	Let Setting \ref{set-1} hold with some $\bs\ge 0$ and $p\ge 1$.  There exists $C:=C(p,T,u(0))$ such that
	\begin{align*}
		\sup_{N\in \mathbb N^+}\sup_{j\le J}\|u_j^N\|_{L^{2p}(\Omega;H)}\le C(1+|\ln(\delta)|\tau^{\frac 12}). \;
	\end{align*}
\end{lm}

\begin{proof}
	
	For convenience, we only give the proof for $p=1$.
	By taking $H$-inner product on the both sides of \eqref{sta-semi} with $u_{j+1}^N$, and using integration by parts and \eqref{imp-euler}, one can obtain that
\begin{align*}
		&\frac 12 (1+\alpha\tau) \|u_{j+1}^N\|^2 +\|\nabla u_{j+1}^N\|^2\tau
	\\
	&\le \frac 12 (1+\alpha\tau)\|u_j^N\|^2
	+2c \|u_{j+1}^N\|^2\tau	-\<(F^{\delta}_{\log,1})'(u_j^N),u_{j+1}^N\>\tau
	+\epsilon\<B(u_j^N)\delta W_j,u_{j+1}^N\>.
\end{align*}

Using H\"older's and Young' inequality, as well as the property that $\|(S_{\tau}-I)v\|\le C\tau^{\alpha}\|v\|_{H^{2\alpha}},$ $ \alpha\in [0,1],$ it follows that
\begin{align*}
	&\frac 12 (1+\alpha\tau) \|u_{j+1}^N\|^2 +\|\nabla u_{j+1}^N\|^2\tau
	\\
	\le& \frac 12 (1+\alpha\tau)\|u_j^N\|^2
	+2c \|u_{j+1}^N\|^2\tau	-\<(F^{\delta}_{\log,1})'(u_j^N),u_{j}^N\>\tau-
	\\
	&\<(F^{\delta}_{\log,1})'(u_j^N),(S_{\tau}-I)u_{j}^N\>\tau-\<(F^{\delta}_{\log,1})'(u_j^N),-S_{\tau}P^N(F^{\delta}_{\log,1})'(u_j^N)\tau 
	\\
	&-S_{\tau}(\alpha-2c)u_{j+1}^N+\alpha u_j^N\tau\>\tau
	-\<(F^{\delta}_{\log,1})'(u_j^N),S_{\tau}\epsilon B(u_j^N)\delta W_j \>\tau
	\\
	&+\epsilon\<B(u_j^N)\delta W_j,S_{\tau}u_{j}^N\>+\epsilon\<B(u_j^N)\delta W_j,-S_{\tau} (F_{\log,1}^{\delta})'(u_j^N)\tau
	\\
	&-(\alpha-2c)\tau u_{j+1}^N+\alpha u_j^N\tau \>+\epsilon^2\<B(u_j^N)\delta W_j,S_{\tau}B(u_j^N)\delta W_j\>\\
	\le &	\frac 12 (1+\alpha\tau)\|u_j^N\|^2
	+2c \|u_{j+1}^N\|^2\tau
+C|\ln(\delta)|^2\tau^2	+C(\|u_{j+1}^N\|^2+\|u_j^N\|^2)\tau^2\\
	&-\<(F^{\delta}_{\log,1})'(u_j^N),S_{\tau}\epsilon B(u_j^N)\delta W_j \>\tau
	+\epsilon\<B(u_j^N)\delta W_j,S_{\tau}u_{j}^N\>\\
	&+\epsilon\<B(u_j^N)\delta W_j,-S_{\tau} (F_{\log,1}^{\delta})'(u_j^N)\tau-(\alpha-2c)\tau u_{j+1}^N+\alpha u_j^N\tau \>
	\\
	&+\epsilon^2\<B(u_j^N)\delta W_j,S_{\tau}B(u_j^N)\delta W_j\>.
\end{align*}
	Taking expectation and using Gronwall's inequality, it follows that
	\begin{align}
		\E[\|u_{j+1}^N\|^{2}]+\E[\sum_{i=0}^{j}\|\nabla u_{j+1}^N\|^2]\tau &\le C(T)\|u_0^N\|^2(1+\ln^2(\delta)\tau+\epsilon^2).
	\end{align}

\end{proof}

Now, we are in a position to present its moment bounds and regularity estimates in Sobolev norms.

\begin{lm}\label{lm-dis-h2}
	Let Setting \ref{set-1} hold with some $\bs=1+\beta$, $\beta\in [0,1)$  and $p\ge 2.$
	It holds that
	\begin{align*}
		\sup_{N\in \mathbb N^+}\sup_{j\le J-1}\|u_{j+1}^N-u_j^N\|_{L^p(\Omega; H)}&\le C(T,u(0),p)|\ln(\delta)|(\tau^{\frac {\beta+1}2}+\epsilon\tau^{\frac 12} ), \;\\
		\sup_{N\in \mathbb N^+}\sup_{j\le J}\|u_{j+1}^N\|_{L^p(\Omega;H^{\bs})}&\le C(T,u(0),p,\beta)(\epsilon+|\ln(\delta)|).
	\end{align*}
\end{lm}

\begin{proof}
	We first show the regularity  estimate in space direction.
Using the fact that
$$\|S_{\tau}^{k} (-A)^{\alpha}v\|\le C(1+(k\tau)^{-\alpha}) \|v\|, \alpha\in (0,1), k\in\mathbb N^+,$$
and Burkholder's inequality, we have that for $\beta\in (0,1),$
	\begin{align*}
		&\|(-A)^{\frac{\beta+1}2}u_{j+1}^N\|_{L^p(\Omega;H)}\\
		&=\|S_{\tau}^{j+1}(-A)^{\frac{\beta+1}2}u_0^N\|+\|P^N\sum_{i=0}^j (-A)^{\frac{\beta+1}2} S_{\tau}^{j+1-i} (F_{\log,1}^{\delta})'(u_i^N)\|_{L^p(\Omega;H)}\tau \\
		&+\|P^N\sum_{i=0}^j(-A)^{\frac {\beta+1}2}S_{\tau}^{j+1-i}(\alpha-2c)u_{j+1}^N+\alpha u_j^N\|_{L^p(\Omega;H)}\tau \\
		&+\|P^N\sum_{i=0}^j (-A)^{\frac{\beta+1}2} S_{\tau}^{j+1-i} \epsilon B(u_i^N)\delta W_j\|_{L^p(\Omega;H)}\\
		&\le C\|u(0)\|_{H^{\frac {\beta+1}2}}
		+C\sum_{i=0}^j (t_{j+1}-t_i)^{-\frac {\beta+1}2}\tau
		(1+|\ln (\delta)|)\\
		&+C\Big(\sum_{i=0}^j (t_{j+1}-t_i)^{-\beta}
		\tau (\E[\|(-A)^{\frac 12}B(u_{i}^N)\|_{\mathcal L_2(H)}^{p}])^{\frac 2p} \Big)^{\frac 12}\epsilon \\
		&\le C(T,u(0),p,\beta)(\epsilon+|\ln(\delta)|).
	\end{align*}
		Next we prove the discrete time regularity estimate.
	By Burkholder's inequality, Lemma \ref{dis-h1}, and the property that $\|(S_{\tau}-I)v\|\le C\tau^{\alpha}\|v\|_{H^{2\alpha}}, \alpha\in [0,1],$ we obtain
	\begin{align*}
		&\|u_{j+1}^N-u_j^N\|_{L^p(\Omega;H)}\\
		&\le \|(S_{\tau}-I)u_j^N\|_{L^p(\Omega;H)}+\tau \|(F_{\log,1}^{\delta})'(u_{j}^N)\|_{L^p(\Omega;H)}+\epsilon \|B(u_j^N)\delta W_j\|_{L^p(\Omega;H)}\\
		&+C(\|u_j^N\|_{L^p(\Omega;H)}+\|u_{j+1}^N\|_{L^p(\Omega;H)})\tau\\
		&\le C\tau^{\frac 12}\|u_j^N\|_{L^p(\Omega;H^1)}
		+C\tau|\ln(\delta)|+C\epsilon \|B(u_j)\|_{L^p(\Omega;\mathcal L_2(H))}\tau^{\frac 12} \\
		&\le C(T,u(0),p)|\ln(\delta)|(\tau^{\frac {\beta+1}2}+\epsilon\tau^{\frac 12} ).
	\end{align*}

\end{proof}

\subsection{Strong convergence analysis}
Now we present its convergence result of the considered scheme under $L^{2p}(\Omega;H)$ with $p=1$. Same convergence rate result still holds for other $p>1.$

\begin{tm}\label{main-con}
	Let Setting \ref{set-1} hold with some $\bs=1+\beta>\frac d2, \beta \in [0,1)$.
	It holds that for $\delta_0\sim O(\delta), $
	\begin{align*}
		\sup_{j\le J}\|u^{\delta}(t_{j})-u_j^N\|_{L^{2}(\Omega;H)}^2
		\le&
		C(T,u(0))|\ln(\delta)|^2\Big(\frac{\lambda_N^{-{\beta+1}}}{\delta^2}+\frac {\|f^1(u_0^N)\|^{2}+\|f^2(u_0^N)\|^{2}}{\delta_0^2}\\
       &+\frac {\epsilon^2}{\delta_0^2}+\frac {\tau^{\beta+1}}{\delta_0^{2}\delta^2}+\frac {\tau^2}{\delta_0^2\delta^2}+\frac {\tau\epsilon^2}{\delta_0^2\delta^2}\Big)
	\end{align*}
	in Case 1, and
	\begin{align*}
		&\sup_{j\le J}\|u^{\delta}(t_{j})-u_j^N\|_{L^{2}(\Omega;H)}^2\\
		&\le
		C(T,u(0))|\ln(\delta)|^2\Big(\frac{\lambda_N^{-{\beta+1}}}{\delta^2}+\frac {\|f^1( u_0^N)\|^{2}+\|f^2(u_0^N)\|^{2}}{\delta_0^2}+\frac {\tau^{\beta+1}}{\delta_0^{2}\delta^2}+\frac {\tau^2}{\delta_0^2\delta^2}\Big)
	\end{align*}
	in Case 2.
\end{tm}

\begin{proof}
	We only present the proof of Case 1 since that of Case 2 is similar.
	Notice that $u^{\delta}(t_{j+1})=(I-P^N)u^{\delta}(t_{j+1})+P^Nu(t_{j+1}).$
	The error bound of $(I-P^N)u^{\delta}(t_{j+1})$ is standard due to the fact that
	\begin{align}\label{spe-err}
		\|(I-P^N)v\|\le C\lambda_N^{-\frac \bs 2}\|v\|_{H^{\bs}}.
	\end{align}
	It suffices to derive an iterative formula for $Er_{j+1}:=P^Nu^{\delta}(t_{j+1})-u^N_{j+1}$.
	To this end, we consider $\E[\<Er_{j+1}-Er_j,Er_{j+1}\>]$ and obtain that
	\begin{align*}
		\frac 12\E [\|Er_{j+1}\|^2]
		&\le \frac 12 \E[\|Er_j\|^2]-\int_{t_j}^{t_{j+1}}\E [\<\nabla (P^Nu^{\delta}(s)-u^N_{j+1}),\nabla Er_{j+1}\>] ds \\
		&-\int_{t_j}^{t_{j+1}}\E[\<(F_{\log,1}^{\delta})'(u^{\delta}(s))-(F_{\log,1}^{\delta})'(u_{j}^N),Er_{j+1}\>]ds\\
		&+\E[\int_{t_j}^{t_{j+1}}\epsilon \<Er_{j+1}-Er_j,(B(u^{\delta}(s))-B(u_j^N))dW(s))\>]\\
		&+\E [\int_{t_j}^{t_{j+1}}\<-2cu^{\delta}(s)+2cu_{j+1}^N-\alpha(u_{j+1}^N-u_j^N),Er_{j+1}\>ds]\\
		&=:\frac 12 \E[\|Er_j\|^2]+III_1+III_2+III_3+III_4.
	\end{align*}
	By using Young's inequality and the mild formulation of $u^{\delta}(s)$, we get that for small $\kappa<1$,
	{\small
		\begin{align*}
			III_1
			&\le -(1-\kappa)\int_{t_j}^{t_{j+1}}\E [\|\nabla Er_{j+1}\|^2] ds\\
			&+
			C(\kappa)\int_{t_j}^{t_{j+1}} \E \Big[\Big\|(-A)^{\frac 12}\int_{0}^s (S(t_{j+1}-r)-S(s-r)) (F_{\log}^{\delta})'(u^{\delta}(r))dr\Big\|^2\Big] ds\\
			&+C(\kappa)\int_{t_j}^{t_{j+1}} \E \Big[\Big\|(-A)^{\frac 12}\int_{s}^{t_{j+1}} (S(t_{j+1}-r)(F_{\log}^{\delta})'(u^{\delta}(r))dr\Big\|^2\Big] ds\\
			&+C(\kappa)\int_{t_j}^{t_{j+1}} \E \Big[\Big\|(-A)^{\frac 12}\int_{0}^{s} (S(t_{j+1}-r)-S(s-r))\epsilon B(u^{\delta}(r))dW(r)\Big\|^2\Big] ds\\
			&+C(\kappa)\int_{t_j}^{t_{j+1}} \E \Big[\Big\|(-A)^{\frac 12}\int_{s}^{t_{j+1}} (S(t_{j+1}-r)\epsilon B(u^{\delta}(r))dW(r)\Big\|^2\Big] ds.
		\end{align*}
	}By Proposition \ref{prop-h1} and the property  that $\|A^{-\alpha}(S(t)-I)v\|\le Ct^{\alpha}\|v\|$, $\alpha\in (0,1)$, we obtain that
	\begin{align*}
		III_1&\le -(1-\kappa)\int_{t_j}^{t_{j+1}}\E [\|\nabla Er_{j+1}\|^2] ds+
		C(\kappa,u(0),T)\tau^{2} (\epsilon^2+\ln^2(\delta)).
	\end{align*}
	Similarly, one can obtain that
	\begin{align*}
		III_4&\le CEr_{j+1}^2\tau+C|\ln(\delta)|^2(\tau^{\frac {\beta+1}2}+\epsilon\tau^{\frac 12}+\lambda_N^{-\frac {\beta+1}2} )^2.
	\end{align*}

	For the term $III_2$, by Young's inequality, \eqref{spe-err}, \eqref{cor-use} and Lemma \ref{lm-con-h2}, it can be seen that
	\begin{align*}
		III_2\le& C(\kappa) \E[\|Er_{j+1}\|^2]\tau
		+C(\kappa,T,u(0))\tau |\ln(\delta)|^2(\frac {\epsilon^2}{\delta^2}+\frac{\lambda_N^{-\beta-1}}{\delta^2}) \\
		&-\int_{t_j}^{t_{j+1}}\E[\int_{ D_{j+1}\cup \mathcal E_{j+1}}((F_{\log,1}^{\delta})'(P^N u^{\delta}(t_{j}))-(F_{\log,1}^{\delta})'(u_{j}^N))Er_{j+1} dx]ds\\
		&-\int_{t_j}^{t_{j+1}}\E[\int_{D_{j+1}^c\cap \mathcal E_{j+1}^c}((F_{\log,1}^{\delta})'(P^N u^{\delta}(t_{j}))-(F_{\log,1}^{\delta})'(u_{j}^N))Er_{j+1} dx]ds\\
		\le& C(\kappa)\E [\|Er_{j+1}\|^2]\tau
		+C(\kappa,T,u(0))\tau |\ln(\delta)|^2(\frac {\epsilon^2}{\delta^2}+\frac {\lambda_N^{-\beta-1}}{\delta^2}) \\
		&+\Big|\int_{t_j}^{t_{j+1}}\E[\int_{\mathcal E_{j+1}}((F_{\log,1}^{\delta})'(P^N u^{\delta}(t_{j}))-(F_{\log,1}^{\delta})'(u_{j}^N))Er_{j+1} dx]ds\Big|,
	\end{align*}
	where $D_{j+1}=\{x\in \mathcal D| |u^{\delta}(t_{j+1})|\ge 1+\delta_0\},$ $\mathcal E_{j+1}=\{x\in \mathcal D| |u_{j+1}^N|\ge 1+\delta_0 \}.$
	Note that the term $C(\kappa,T,u(0))\tau |\ln(\delta)|^2\frac {\epsilon^2}{\delta^2}$
	disappears in Case 2 thanks to Lemma \ref{lm-tail}.

	Next we show a priori bound on $\mathcal E_{j+1}$.
	To this end, we construct a continuous interpolation of the considered numerical scheme via the flow of a local continuous problem defined on $[t_j,t_{j+1}],$
	\begin{align*}
		\widetilde u(t)=&\widetilde u(t_j)+P^NAS_{\tau}\widetilde u(t_j)(t-t_j)-P^NS_{\tau}(F_{\log,1}^{\delta})'(u_{j}^N)(t-t_j)\\
		&+2cS_{\tau}P^Nu_{j+1}^N(t-t_j)-\alpha S_{\tau}P^N(u_{j+1}^N-u_j^N)(t-t_j)\\
        &+P^NS_{\tau}\epsilon B(\widetilde u(t_j))(W(t)-W(t_j))
	\end{align*}
	with $\widetilde u(t_j)=u_j^N.$
Similar to the proof of Lemma \ref{lm-dis-h2}, one can obtain the regularity estimate of $\widetilde u$, i.e., for $\beta\in (0,1),$ $s\le t,$ $p\ge 2,$
	\begin{align}\label{int-dis-time}
		\|\widetilde u(t)-\widetilde u(s)\|_{L^p(\Omega; H)}&\le C(T,u(0),p)(|\ln(\delta)|+\epsilon)|\max(t-s,\tau)|^{\min(\frac {\beta+1}2,\frac 12)}, \\\label{int-dis-space}
		\|\widetilde u(t)\|_{L^p(\Omega;H^{1+\beta})}&\le C(T,u(0),p,\beta)(\epsilon+|\ln(\delta)|).
	\end{align}
	Denote $[s]$ the largest integer $j$ such that $j\tau \le s$, $s\in [0,T]$.
	Following the steps in the proof of Lemma \ref{lm-tail}, it follows that
	\begin{align*}
		&\frac 12 \|f^1(\widetilde u(t))\|^2\\
		\le&\tps  \frac 12 \|f^1(\widetilde u(0))\|^2
		-\int_0^t \<\mathbb I_{\widetilde u(s)>1} \nabla \widetilde u(s),\mathbb I_{\widetilde u(s)>1} \nabla S_{\tau}\widetilde u([s]\tau)\>ds
		\\
		&\tps-\int_0^t  \<\widetilde u(s)-1, \mathbb I_{\widetilde u(s)>1} P^NS_{\tau}(F_{\log,1}^{\delta})'(\widetilde u([s]\tau)\> ds\\
       &\tps+\int_0^t \epsilon  \<\mathbb I_{\widetilde u(s)>1}(\widetilde u(s)-1), P^N S_{\tau} B(\widetilde u[s]\tau)dW(s)\> +\frac {\epsilon^2}2 \int_0^t \mathbb \|\mathbb I_{\widetilde u(s)>1} B(\widetilde u([s]\tau))\|^2_{\mathcal L_2(H)} ds\\
		&\tps+\int_0^t \<\widetilde u(s)-1,\mathbb I_{\widetilde u(s)>1} P^N S_\tau(2c\widetilde u([s]\tau+\tau)-\alpha(\widetilde u([s]\tau+\tau)-\widetilde u([s]\tau))\>ds.
	\end{align*}
	Using the property of $(F_{\log,1}^{\delta})'$, Burkholder's inequality and \eqref{int-dis-time}-\eqref{int-dis-space}, we get
	\begin{align*}
		&\E [\sup_{t\in [0,T]} \|f^1(\widetilde u(t))\|^{2p}]\\
		\le& C(T,u(0),p) \Big(\|f^1(\widetilde u(0))\|^{2p}+|\ln(\delta)|^{2p}(\epsilon^{2p}+(\frac {{\tau^{\frac {\beta+1}2}}}{\delta}+\frac {\tau}{\delta}+\frac {\epsilon\tau^{\frac 12}}{\delta})^{2p}) \Big).
	\end{align*}
	Similar results hold for $\E [\sup_{t\in [0,T]} \|f^2(\widetilde u(t))\|^{2p}].$
	Thus,  we conclude that there exists a positive stochastic process $\eta_2(s)$ with any finite $p$th-moment such that
	\begin{align*}
		\mathcal E_{j+1}\le \eta_2(s) \Big(\frac{\|f^1(\widetilde u(0))\|^{2}+\|f^2(\widetilde u(0))\|^{2}}{\delta_0^2}+\frac {|\ln(\delta)|^2}{\delta_0^2}( {\epsilon^2}+\frac {\tau^{\beta+1}}{\delta^2}+\frac {\tau^2}{\delta^2}+\frac {\tau\epsilon^2}{\delta^2})\Big),\; a.s.
	\end{align*}
	Substituting the above inequality into the estimate of $III_2$, we have that
	\begin{align*}
		III_2\le& C(\kappa)\E [\|Er_{j+1}\|^2]\tau
		+C(\kappa,T,u(0))\tau |\ln(\delta)|^2(\frac {\epsilon^2}{\delta^2}+\frac {\lambda_N^{-\beta-1}}{\delta^2})+C(T,u(0),p)\\
		&\Big(\frac{\|f^1(\widetilde u(0))\|^{2}+\|f^2(\widetilde u(0))\|^{2}}{\delta_0^2}+\frac {|\ln(\delta)|^2}{\delta_0^2}( {\epsilon^2}+\frac {\tau^{\beta+1}}{\delta^2}+\frac {\tau^2}{\delta^2}+\frac {\tau\epsilon^2}{\delta^2})\Big).
	\end{align*}

	Thanks to the properties of the conditional expectation and stochastic integral,
	it follows that
    $$III_3= III_{31}+III_{32}+III_{33}+III_{34}+III_{35}, $$
    where
    \begin{align*}
		III_{31} :=&-\E\Big[\int_{t_j}^{t_{j+1}}\epsilon \<\int_{t_j}^{t_{j+1}}S(t_{j+1}-r)P^N(F_{\log}^{\delta})'(u^{\delta}(r)) dr,\\
        &(B(u^{\delta}(s))- B(u^{\delta}(t_j)))dW(s)\>\Big], \\
		III_{32}:=&-\E\Big[\int_{t_j}^{t_{j+1}}\epsilon \<\int_{t_j}^{t_{j+1}}S(t_{j+1}-r)P^N(F_{\log}^{\delta})'(u^{\delta}(r)) dr,\\
       &P^N(B(u^{\delta}(t_j))- B(u_j^N))dW(s)\>\Big], \\
		III_{33}:=&\E\Big[\int_{t_j}^{t_{j+1}}\epsilon \<S_{\tau}(\alpha u_{j+1}^N)\tau,P^N(B(u^{\delta}(s))- B(u_j^N))dW(s)\>\Big],\\
		III_{34}:=&\E\Big[\int_{t_j}^{t_{j+1}}\epsilon \<S_{\tau}(\alpha u_{j+1}^N)\tau,P^N(B(u^{\delta}(t_j))- B(u_j^N))dW(s)\>\Big],
	\end{align*}
and $III_{35}$ is defined as
\begin{align*}
		III_{35}:=&\E\Big[\int_{t_j}^{t_{j+1}} \epsilon^2 \<\int_{t_j}^{t_{j+1}}(S(t_{j+1}-r)B(u^{\delta}(r))-S_{\tau}B(u_{j}^N))dW(r),P^N(B(u^{\delta}(s))\\
        &- B(u_j^N))dW(s)\>\Big].
\end{align*}
	Similar to the estimates of $III_1$-$III_2$, by using \eqref{spe-err}  and Proposition \ref{prop-h1}, one can show that
	\begin{align*}
		&III_{31}+ III_{32}+III_{33}+III_{34}\\
  \le & C\epsilon \tau |\ln(\delta)|(\tau|\ln(\delta)|+\epsilon\tau)+ C\epsilon \lambda_N^{-1-\beta}|\ln(\delta)|\tau+
		C\|Er_j\|^2\tau.
	\end{align*}
	Next, we deal with $III_{35}.$
	Using the property that $\|(S_{\tau}-I)v\|\le C\|v\|_{ H^{\alpha}}\tau^{\frac \alpha 2}, \alpha \in [0,2]$, Burkholder's inequality  and  Proposition \ref{prop-h1}, we have that
	\begin{align*}
		&III_{35}\\
  &\le
		\epsilon^2\E [ \int_{t_j}^{t_{j+1}} \<\int_{t_j}^{t_{j+1}}B(u^{\delta}(r))-B(u^{\delta}(t_j)) dW(r),P^N(B(u^{\delta}(s))-B(u^{\delta}(t_j)))dW(s)\>]\\
		&+\epsilon^2\E [ \int_{t_j}^{t_{j+1}} \<\int_{t_j}^{t_{j+1}}B(u^{\delta}(r))-B(u^{\delta}(t_j)) dW(r),P^N(B(u^{\delta}(t_j))-B(u^N_j))dW(s)\>]\\
		&+ \epsilon^2\E [ \int_{t_j}^{t_{j+1}}\<\int_{t_j}^{t_{j+1}}B(u^{\delta}(t_j))-B(u_j^N) dW(r),P^N(B(u^{\delta}(s))-B(u^{\delta}(t_j)))dW(s)\>]\\
		&+ \epsilon^2\E [ \int_{t_j}^{t_{j+1}}\<\int_{t_j}^{t_{j+1}}B(u^{\delta}(t_j))-B(u_j^N) dW(r),P^N(B(u^{\delta}(t_j))-B(u_j^N))dW(s)\>]\\
		&+\epsilon^2\E [ \int_{t_j}^{t_{j+1}}\<\int_{t_j}^{t_{j+1}} ((S(t_{j+1}-r)-I)B(u^{\delta}(r))-(S_\tau-I)B(u_j^N)) dW(r),\\
		&\quad P^N(B(u^{\delta}(s))-B(u^{\delta}(t_j)))dW(s)\>]\\
		&+\epsilon^2 \E[\int_{t_j}^{t_{j+1}}\<\int_{t_j}^{t_{j+1}} ((S(t_{j+1}-r)-I)B(u^{\delta}(r))-(S_\tau-I)B(u_j^N)) dW(r),\\
		&\quad P^N(B(u^{\delta}(t_j))-B(u_j^N))dW(s)\>]\\
		&\le C\epsilon^2(\tau^2|\ln(\delta)|^2+\epsilon^2\tau^2
		+\lambda_N^{-1-\beta}|\ln(\delta)|\tau) +C\|Er_j\|^2\tau.
	\end{align*}
	Summarizing the above estimates,
	Gronwall's inequality yields the desired result.
\end{proof}

Thanks to Theorem \ref{main-con}, we are in a position to present the convergence of the energy functional as in Proposition \ref{prop-con}.

\begin{prop}\label{prop-con-ene}
	Let Setting \ref{set-1} hold with some $\bs=1+\beta>\frac d2$, $\beta\in [0,1)$. It holds that  in Case 1, for a small $\delta_0\sim O(\delta)$,
	\begin{align*}
		&\E[|H_{\delta}(u^{\delta}(t_j))-H_{\delta}(u_j^N)|]\\\nonumber
		&\le C\Big(\frac{\lambda_N^{-\frac {\beta+1}2}}{\delta}+\frac {\|f^1(u_0^N)\|+\|f^2(u_0^N)\|}{\delta_0}+\frac {|\ln(\delta)|}{\delta_0}( {\epsilon}+\frac {\tau^{\frac{\beta+1}2}}{\delta}+\frac {\tau}{\delta}+\frac {\tau^{\frac 12}\epsilon}{\delta}) \Big)^{\frac {(\bs-1)}{\bs}}|\ln(\delta)|,
	\end{align*}
	and that in Case 2,
	\begin{align*}
		&\E[|H_{\delta}(u^{\delta}(t_j))-H_{\delta}(u_j^N)|] \\\nonumber
		&\le C\Big(\frac{\lambda_N^{-\frac {\beta+1}2}}{\delta}+\frac {\|f^1(u_0^N)\|+\|f^2(u_0^N)\|}{\delta_0}+\frac {|\ln(\delta)|}{\delta_0}( \frac {\tau^{\frac{\beta+1}2}}{\delta}+\frac {\tau}{\delta})\Big)^{\frac {(\bs-1)}{\bs}}|\ln(\delta)|.
	\end{align*}
\end{prop}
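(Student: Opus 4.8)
The plan is to follow the strategy of Proposition~\ref{prop-con}, with the limiting solution replaced by the numerical solution $u_j^N$ and Theorem~\ref{tm-con1} replaced by Theorem~\ref{main-con}. I would start from the splitting
\[
	H_{\delta}(u^{\delta}(t_j))-H_{\delta}(u_j^N)=\tfrac12\big\langle\nabla(u^{\delta}(t_j)-u_j^N),\,\nabla(u^{\delta}(t_j)+u_j^N)\big\rangle+\int_{\mathcal D}\big(F_{\log}^{\delta}(u^{\delta}(t_j))-F_{\log}^{\delta}(u_j^N)\big)\,dx,
\]
so that it suffices to control the gradient part and the logarithmic-potential part separately in $L^1(\Omega)$.

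For the gradient part I would apply the Sobolev interpolation inequality $\|(-A)^{1/2}w\|\le C\|w\|^{(\bs-1)/\bs}\|(-A)^{\bs/2}w\|^{1/\bs}$ to $w=u^{\delta}(t_j)-u_j^N$ (with $\bs=1+\beta$, the exponents $1/\bs$ and $(\bs-1)/\bs$ are exactly those in the statement). Taking $L^2(\Omega)$ norms and using H\"older's inequality in $\Omega$ with exponents $\bs$ and $\bs/(\bs-1)$ gives $\E\|\nabla w\|^2\le C(\E\|w\|^2)^{(\bs-1)/\bs}(\E\|(-A)^{\bs/2}w\|^2)^{1/\bs}$; here $\E\|w\|^2$ is controlled by Theorem~\ref{main-con} — which is, up to constants, the parenthesised error in the claim — while $\E\|(-A)^{\bs/2}w\|^2\le C(\epsilon+|\ln(\delta)|)^2$ comes from the uniform $H^{\bs}$ bounds in Lemma~\ref{lm-con-h2} and Lemma~\ref{lm-dis-h2}. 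Since $\|\nabla(u^{\delta}(t_j)+u_j^N)\|$ is bounded in every $L^{p}(\Omega)$ by Proposition~\ref{prop-h1} and Lemma~\ref{lm-dis-h2}, one Cauchy--Schwarz in $\Omega$ converts $\E\big[\langle\nabla w,\nabla(u^{\delta}(t_j)+u_j^N)\rangle\big]$ into a bound of the stated form, with the $(\bs-1)/\bs$ power of the Theorem~\ref{main-con} error and the $|\ln(\delta)|$ prefactor.

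For the potential part the key observation is that $F_{\log,1}^{\delta}$ is globally Lipschitz with constant $\tfrac12|\ln(\delta)|+C$: indeed $(F_{\log,1}^{\delta})'(\xi)=\tfrac12\ln\!\big((\xi+1)^2+\delta\big)-\tfrac12\ln\!\big((\xi-1)^2+\delta\big)$ is uniformly bounded on $\mathbb R$ by $\tfrac12|\ln(\delta)|+C$ for small $\delta$, while $F_{\log,2}^{\delta}(\xi)=-c\xi^2$ is handled through $|a^2-b^2|=|a-b|\,|a+b|$. Integrating over $\mathcal D$, using Cauchy--Schwarz in space and in $\Omega$, and the second-moment bounds on $\|u^{\delta}(t_j)\|$ and $\|u_j^N\|$ from Proposition~\ref{prop-wel} and Lemma~\ref{dis-h1}, one obtains $\E\big[\|F_{\log}^{\delta}(u^{\delta}(t_j))-F_{\log}^{\delta}(u_j^N)\|_{L^1}\big]\le C|\ln(\delta)|\,(\E\|u^{\delta}(t_j)-u_j^N\|^2)^{1/2}$, which by Theorem~\ref{main-con} is of lower order than the gradient contribution (note $(\bs-1)/\bs<1$ and the parenthesised error tends to $0$), hence absorbed into the stated bound. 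Distinguishing Case~1 from Case~2 only changes which terms enter Theorem~\ref{main-con} (in particular, in Case~2 the terms involving $\epsilon/\delta_0$ drop by Lemma~\ref{lm-tail}); the rest of the argument is identical.

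The main obstacle I expect is the careful bookkeeping of the logarithmic factors: both the Lipschitz constant of $F_{\log}^{\delta}$ and the $H^{\bs}$-regularity bounds for $u^{\delta}(t_j)$ and $u_j^N$ degenerate like $|\ln(\delta)|$, so one must verify that after interpolation and the H\"older splitting only a single power of $|\ln(\delta)|$ multiplies the $(\bs-1)/\bs$-th power of the $L^2$ error, rather than accumulating extra logarithms, and that the potential contribution is genuinely dominated. All the substantive probabilistic work has already been carried out in Theorem~\ref{main-con} (via the escape-probability estimate of Lemma~\ref{lm-tail}); what remains here is an essentially deterministic interpolation argument.
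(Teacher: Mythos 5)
Your proposal follows essentially the same route as the paper's own (very terse) proof: the Sobolev interpolation inequality $\|(-A)^{\frac 12}w\|\le C\|w\|^{\frac{\bs-1}{\bs}}\|(-A)^{\frac \bs 2}w\|^{\frac 1\bs}$ applied to $w=u^{\delta}(t_j)-u_j^N$ together with the $H^{\bs}$ bounds of Lemma \ref{lm-con-h2} and Lemma \ref{lm-dis-h2} for the gradient part, the $O(|\ln(\delta)|)$ Lipschitz bound of $F_{\log}^{\delta}$ for the potential part, and then Theorem \ref{main-con} with \eqref{spe-err} to conclude, so it is correct and not a different argument. The logarithmic bookkeeping you flag (the $|\ln(\delta)|^{1/\bs}$ from interpolation multiplying the $|\ln(\delta)|$-sized $H^1$ bound of $u_j^N$) is indeed the only delicate point, and the paper itself passes over it in exactly the same way.
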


\begin{proof}
	Following the steps as in the proof of Proposition \ref{prop-con}, we have that
	\begin{align*}
		\|\nabla(u^{\delta}(t_j)-u_j^N)\|_{L^2(\Omega;H)}&\le C\|(u^{\delta}(t_j)-u_j^N)\|_{L^2(\Omega;H)}^{\frac {\bs-1}{\bs}} |\ln(\delta)|^{\frac 1\bs},
	\end{align*}
	and that
	\begin{align*}
		&\E [\|F_{\log}^{\delta}(u^{\delta}(t_j))-F_{\log}(u_j^N)\|_{L^1}]\\
		&\le C(1+|\ln(\delta)| \sqrt{\E [\|u(t)-u^{\delta}(t)\|^2]}).
	\end{align*}
	Combining the above estimates with Theorem \ref{main-con} and \eqref{spe-err}, we complete the proof.
\end{proof}

The above result also implies that the proposed scheme is stable in the energy space.
Thanks to Theorem \ref{main-con} and Theorem \ref{tm-con1}, one can obtain the overall strong error estimate.

\begin{cor}\label{main-cor}
	Let Setting \ref{set-1} hold with some $\bs=1+\beta>\frac d2, \beta \in [0,1)$.
	It holds that for $\delta_0\sim O(\delta), $
	\begin{align*}
		&\sup_{j\le J}\|u_{det}(t_{j})-u_j^N\|_{L^{2}(\Omega;H)}^2\\
		&\le
		C|\ln(\delta)|^2\Big(\frac{\lambda_N^{-{\beta+1}}}{\delta^2}+\frac {\|f^1(u_0^N)\|^{2}+\|f^2(u_0^N)\|^{2}}{\delta_0^2}+\frac {\epsilon^2}{\delta_0^2}+\frac {\tau^{\beta+1}}{\delta_0^{2}\delta^2}+\frac {\tau^2}{\delta_0^2\delta^2}+\frac {\tau\epsilon^2}{\delta_0^2\delta^2}\Big)\\
		&+C(\delta^2+\epsilon^2)
	\end{align*}
	in Case 1, and
	\begin{align*}
		&\sup_{j\le J}\|u(t_{j})-u_j^N\|_{L^{2}(\Omega;H)}^2\\
		&\le
		C|\ln(\delta)|^2\Big(\frac{\lambda_N^{-{\beta+1}}}{\delta^2}+\frac {\|f^1( u_0^N)\|^{2}+\|f^2(u_0^N)\|^{2}}{\delta_0^2}+\frac {\tau^{\beta+1}}{\delta_0^{2}\delta^2}+\frac {\tau^2}{\delta_0^2\delta^2}\Big)+C\delta^2
	\end{align*}
	in Case 2.
\end{cor}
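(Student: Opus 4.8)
The plan is to obtain the result as an immediate consequence of the triangle inequality combined with the two convergence estimates already established: Theorem~\ref{tm-con1}, which controls the regularization error ($u^\delta \to u_{det}$ in Case~1, $u^\delta \to u$ in Case~2), and Theorem~\ref{main-con}, which controls the discretization error $u^\delta(t_j) \to u_j^N$. Concretely, for each $j \le J$ I would write
\begin{align*}
	u_{det}(t_j) - u_j^N = \big(u_{det}(t_j) - u^\delta(t_j)\big) + \big(u^\delta(t_j) - u_j^N\big),
\end{align*}
take the $L^2(\Omega;H)$-norm, square it, and use $\|a+b\|^2 \le 2\|a\|^2 + 2\|b\|^2$ to split the total error into a regularization part and a numerical part, each of which is estimated separately.

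For the regularization part I would invoke Theorem~\ref{tm-con1} with $p=1$, which in Case~1 gives
\begin{align*}
	\E[\|u^\delta(t_j) - u_{det}(t_j)\|^2] \le C\Big(\delta^2 + \epsilon^2 + (1+\ln^2(\delta))\tfrac{\epsilon^2}{\delta_0^2}\Big),
\end{align*}
with a constant uniform over $t_j \in [0,T]$. Since $\delta_0 \sim O(\delta)$ and $\delta$ is small, the factor $(1+\ln^2(\delta))$ is absorbed into $C|\ln(\delta)|^2$, so this contributes $C(\delta^2 + \epsilon^2)$ together with $C|\ln(\delta)|^2 \tfrac{\epsilon^2}{\delta_0^2}$, the latter already being present in the bound of Theorem~\ref{main-con}. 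In Case~2 the same theorem yields $\E[\|u^\delta(t_j) - u(t_j)\|^2] \le C\delta^{2}$, which produces the lone additive $C\delta^2$ term. For the numerical part I would simply quote the estimate of Theorem~\ref{main-con}, which is already stated as a supremum over $j \le J$, and is therefore directly compatible with the supremum in the corollary.

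Summing the two contributions and discarding the redundant lower-order terms then yields precisely the stated inequalities in Case~1 and Case~2, with $C = C(T,u(0))$. There is essentially no genuine obstacle here; the only points requiring care are purely bookkeeping: verifying that the extra regularization error $C(\delta^2+\epsilon^2)$ in Case~1 (respectively $C\delta^2$ in Case~2) is of the same order as, or dominated by, terms already appearing in Theorem~\ref{main-con}, and confirming that all constants can be taken independent of $j$, $N$, $\tau$, $\delta$ and $\epsilon$, so that the final bound holds uniformly in $j \le J$.
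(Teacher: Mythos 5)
Your proposal is correct and is exactly the argument the paper intends: the corollary is stated immediately after the remark ``Thanks to Theorem \ref{main-con} and Theorem \ref{tm-con1}, one can obtain the overall strong error estimate,'' i.e., the triangle inequality splitting into the regularization error (Theorem \ref{tm-con1}) and the discretization error (Theorem \ref{main-con}), with the $(1+\ln^2(\delta))\epsilon^2/\delta_0^2$ term absorbed into the existing $|\ln(\delta)|^2\epsilon^2/\delta_0^2$ term and the residual $C(\delta^2+\epsilon^2)$ (resp.\ $C\delta^2$) appearing additively. Your bookkeeping matches the stated bounds in both cases.
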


\begin{rk}
	It should be noticed that the derived convergence rate may be not optimal since it relies on $\frac 1{\delta}$.
	The upper bound of $\|f^1(u_0^N)\|$ and $\|f^2(u_0^N)\|$ could be estimated as follows. Let us illustrate the estimate of  $\|f^1(u_0^N)\|$ since that of $\|f^2(u_0^N)\|$ is similar.
	By H\"older's inequality, Chebyshev's inequality, the Sobolev embedding theorem $L^4(\mathcal D)\longrightarrow H^1$ for $d\le 3$ and \eqref{spe-err}, it holds that
	\begin{align*}
		\|f^1(u_0^N)\|&\le |\{x\in \mathcal D| |u_0^N|>1\}|^{\frac 12}\|1-u_0^N\|_{L^4(\mathcal D)}\\
		&\le C(u(0))|\{x\in \mathcal D| |u_0^N-u(0)|>c_0\}|^{\frac 12}\\
		&\le C(u(0))\frac {\|u_0^N-u(0)\|}{c_0} \le  C(u(0),c_0)\lambda_N^{-\frac {\bs}2},
	\end{align*}
	where $c_0=1-|u(0)|_E>0$.
	In particular, if $\bs>\frac d2$, one can obtain an improved bound,
	\begin{align*}
		\|f^1(u_0^N)\|&\le |\{x\in \mathcal D| |u_0^N|>1\}|\|1-u_0^N\|_{E}\\
		&\le C(u(0),c_0)\lambda_N^{-\bs}.
	\end{align*}
	One can slightly modify the condition of $u(0)$ and consider the random initial data.
\end{rk}

To end this part, we show that the proposed scheme nearly preserve the Stampacchhia maximum principle via a strong convergence result in $L^p(\Omega; E).$
Thanks to the Gagliardo--Nirenberg inequality, one can see that the following asymptotic maximum principle holds.
Due to Lemma \ref{lm-dis-h2} and Proposition \ref{prop-con-ene}, from the Gagliardo--Nirenberg inequality, it holds that
\begin{align*}
	&\|u^{\delta}(t_{j+1})-u_{j+1}^N\|_{L^p(\Omega;E)}\\
	&\le C \|u^{\delta}(t_{j+1})-u_{j+1}^N\|_{L^p(\Omega;H)}^{\theta} (\|u^{\delta}(t_{j+1})\|_{L^p(\Omega;H^{ {\beta+1}})}^{1-\theta}+\|u_{j+1}^N\|_{L^p(\Omega;H^{ {\beta+1}})}^{1-\theta}),
\end{align*}
where $\theta=\frac {d}{2(\beta+1)}\in (0,1).$
This implies that $d<2\beta+2$ with $\beta\in (0,1).$
Applying Theorem \ref{main-con}, we get
\begin{align*}
	\|u^{\delta}(t_{j+1})-u_{j+1}^N\|_{L^p(\Omega;E)}\to 0
\end{align*}
as $|\ln(\delta)|(\frac{\lambda_N^{-\frac {\beta+1}2}}{\delta}+\frac {\|f^1(u_0^N)\|+\|f^2(u_0^N)\|}{\delta_0}+\frac {\tau^{\frac {\beta+1} 2}}{\delta_0\delta}+\frac {\tau}{\delta_0\delta}+\frac {\tau^{\frac 12}}{\delta_0\delta})\sim o(1)$.
Up to a subsequence, we  conclude that  in Case 1,
\begin{align*}
	\lim_{(\epsilon, \delta,\tau,N)\to (0,0,0,+\infty)}\mathbb P(|u_{j+1}^N|<1)=1,
\end{align*}
and in Case 2,
\begin{align*}
	\lim_{(\delta,\tau,N)\to (0,0,+\infty)}\mathbb P(|u_{j+1}^N|<1)=1.
\end{align*}

\section{Numerical experiments}
\label{sec-4}
In this section, we present several numerical experiments to validate our theoretical findings for   \eqref{log-sac} and \eqref{log-reg-sac}.
For simplicity, in what follows, the computational domains $\mathcal{D}$ are set to be $(0,2\pi)\times(0,2\pi)$  for the periodic boundary condition and  $(-1,1)\times(-1,1)$ for the homogeneous Neumann boundary condition, respectively.
The operator  $B(u)$ satisfies
\begin{align*}
	B(u)e_{l,k}=B(u)\sqrt{q_{k,l}}e_{l,k},
\end{align*}
where $q_{k,l}=\frac 1{1+k^2+l^2}$, $e_{l,k}=\exp\big(i(kx+ly)\big)$ for the periodic boundary condition
and $e_{l,k}=\phi_{k}(x)\phi_{l}(y)$
for  the homogeneous Neumann boundary condition. Here  $\phi_{k}(x)$ is defined by
$$\phi_{k}(x):=L_{k}(x)-\frac{k(k +1)}{(k +2)(k +3)}L_{k+2}(x)$$
with $L_{k}(x)$ being the Legendre polynomial with degree $k$, satisfying the homogeneous Neumann boundary condition. For the evaluation of the noise term, we have used the Algorithm 10.6 reported in \cite{LPS14}.
Moreover, we always set $c=3/2$ and choose the stabilizing parameter as $\alpha=2$ for the stabilized scheme \eqref{sta-semi} unless specified otherwise.

\begin{figure*}[!t]
	\centerline{\includegraphics[scale=0.35]{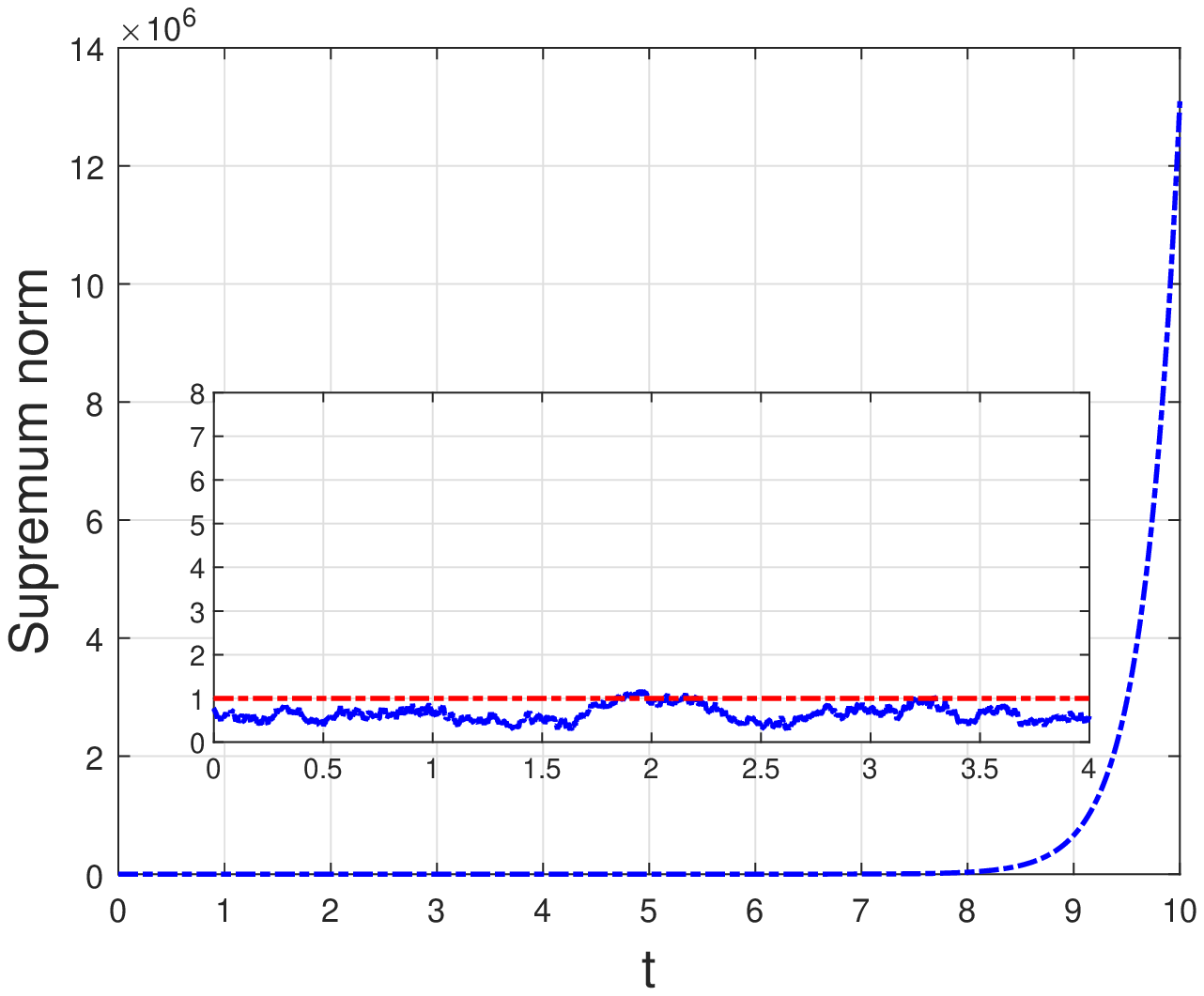}\includegraphics[scale=0.35]{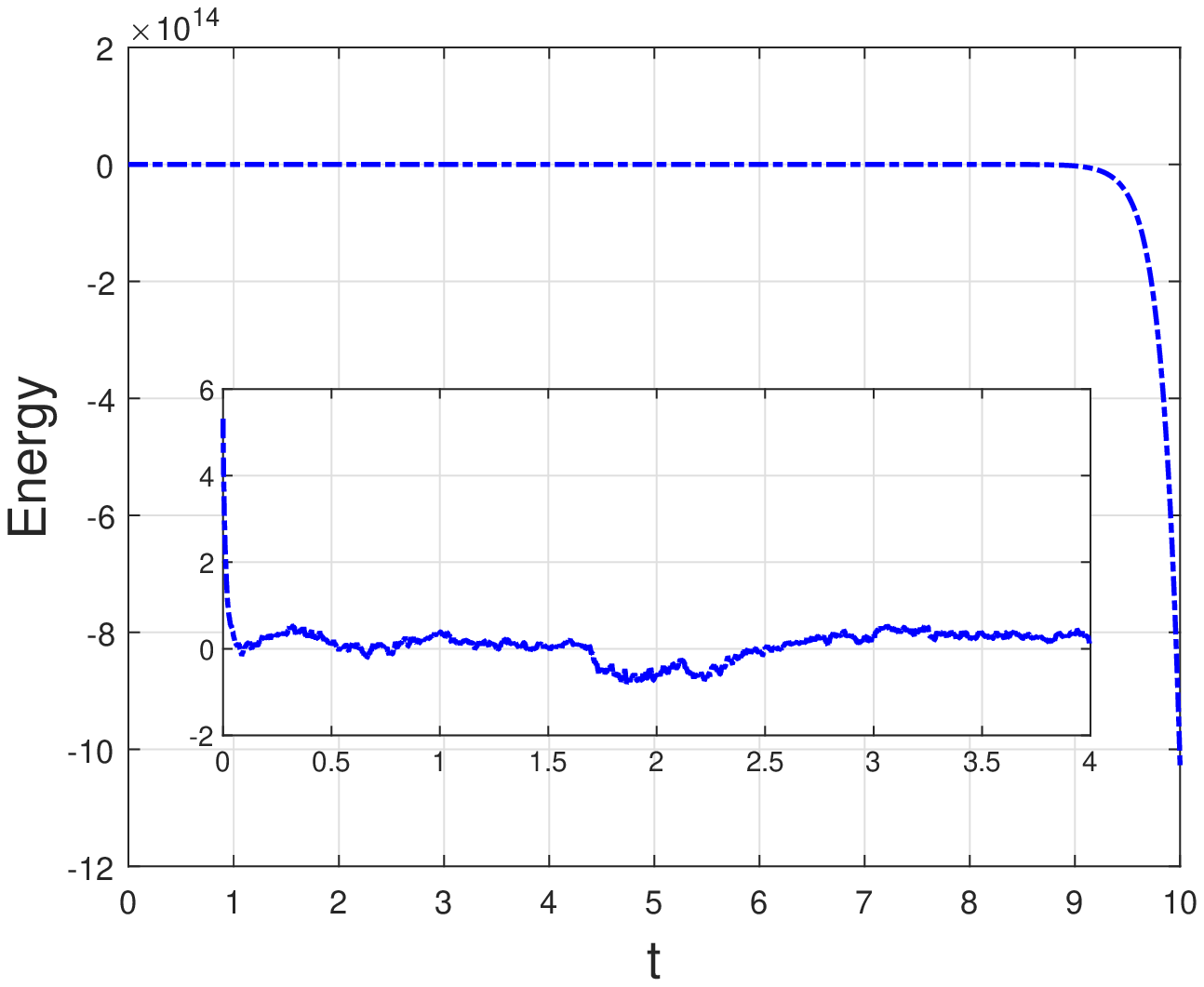}}
	\centerline{\includegraphics[scale=0.35]{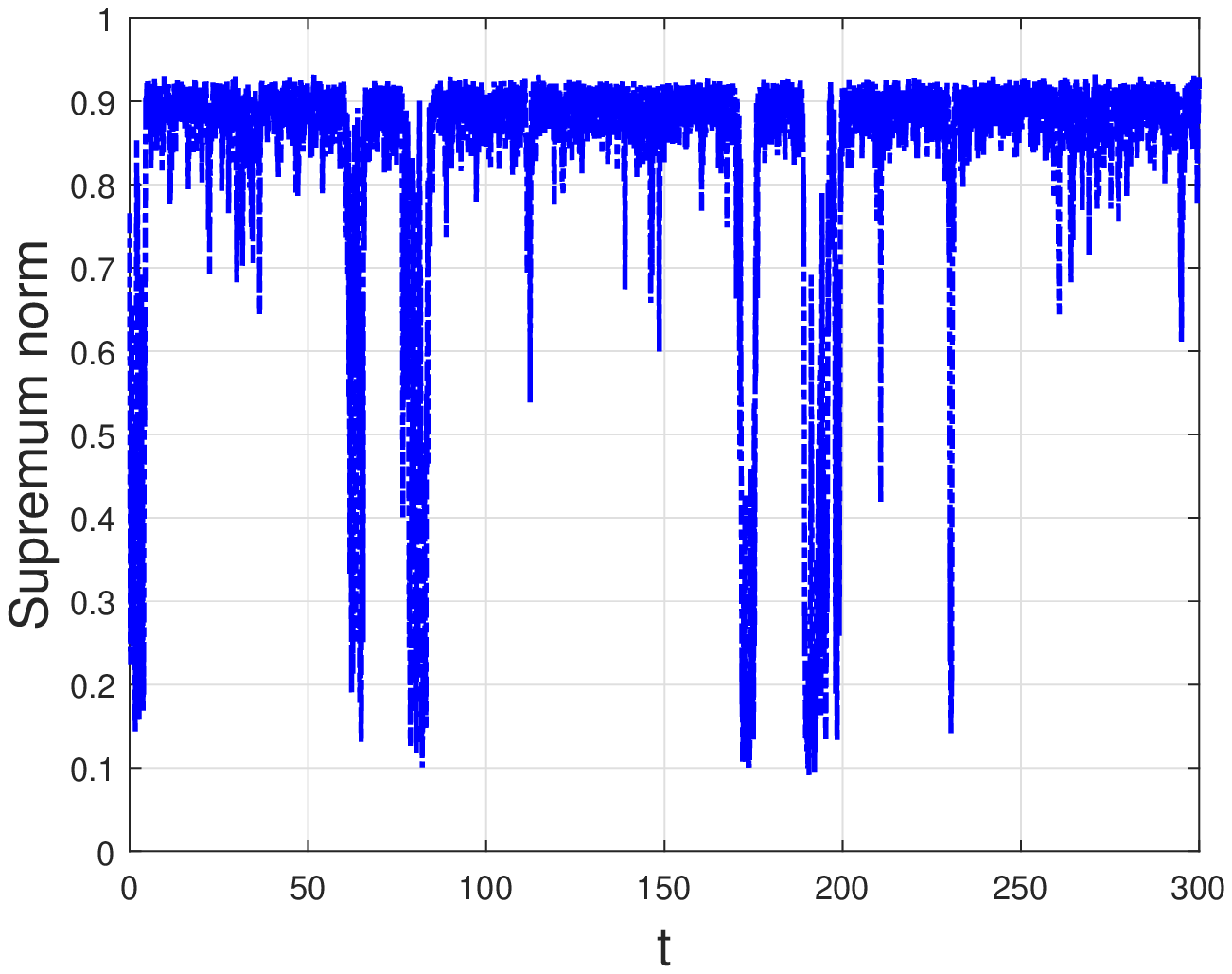}\includegraphics[scale=0.35]{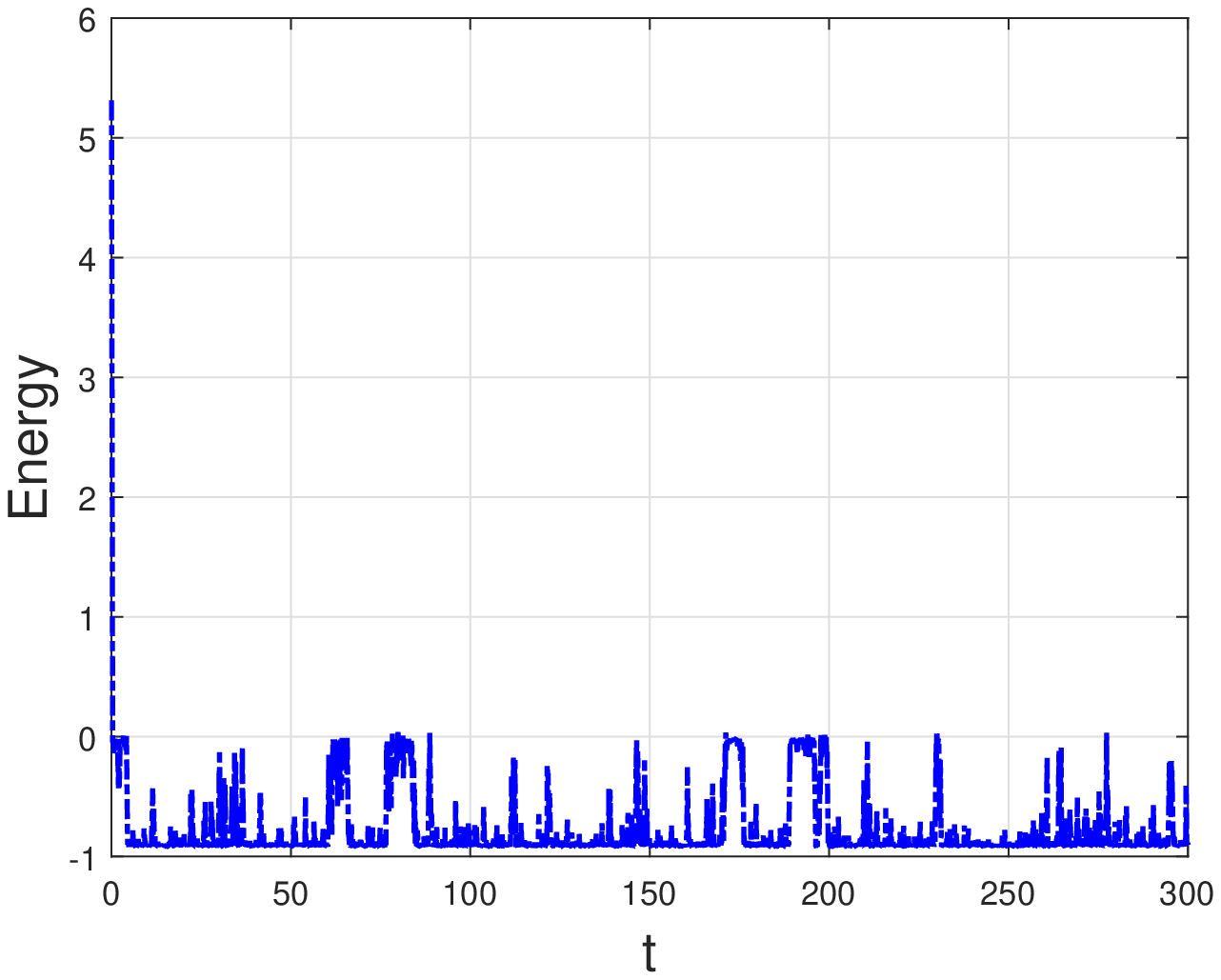}}
	\caption{The evolutions in time of the supremum norm (left) and the energy (right)  of the simulated solution with $B(u)=1$ (top) and $B(u)=\sin^{2}(\pi u)$ (bottom).
	}\label{fig1}
\end{figure*}
\subsection{Comparison for two type of noises:}
In this test, two type of noises $B(u)=1$ (in Case 1) and $B(u)=\sin^{2}(\pi u)$ (in Case 2) are considered for the model \eqref{log-reg-sac} with $\sigma=1,\delta=1e-18, \epsilon=1$, and the initial data $u(0)=0.4[\cos(\pi x)\sin(\pi y)+\sin(\pi x)\cos(2\pi y)]+0.2.$ The operator $A$ is subject to the homogeneous Neumann boundary condition. The simulation is performed by the stabilized scheme \eqref{sta-semi} with time step size $\tau=1e-3$, in which the Legendre spectral method with $128\times128$ modes is used for the spatial discretization. In Figure \ref{fig1}, we plot the evolutions in time of the supremum norm and the energy of the numerical solutions for both two types of noises up to $T=300.$ It is observed that the numerical solution and the corresponding energy will blow up at a finite time (about $t=10$) for the case of $B(u)=1$, which indicates the ill-posedness of the model with $\epsilon=1$ and $B(u)=1$. As shown in the last line of Figure \ref{fig1}, the regularized model  \eqref{log-reg-sac} with $B(u)=\sin^{2}(\pi u)$ is a more reasonable model, in which the Stampacchia maximum bound and the energy evolution law are almost preserved. Therefore, in what follows, we will focus our attention on Case 2 to test the performance of  proposed numerical schemes. For simplicity, we always choose $B(u)=\sin^{2}(\pi u)$ in the rest of this section.

\subsection{Test of the convergence accuracy}
For simplicity, we only numerically study the convergence order of  \eqref{sta-semi} for  \eqref{log-reg-sac} under the homogeneous Neumann boundary condition. Both the diffuse interface width $\sigma$ and the noise density $\epsilon$ are set to be $1$, and the initial data considered in this test is given by
$$u(0)=0.4[\cos(\pi x)\sin(\pi y)+\sin(\pi x)\cos(2\pi y)]+0.2.$$
The Legendre spectral method with $128\times128$ modes is used for the spatial discretization.
Since there is no exact solution available, we will use the numerical solution by
stabilized semi-implicit scheme \eqref{sta-semi} with a small enough time step size $\tau=1e-6$ as a reference solution.
The averaged $L^{2}$ errors over 1000 realizations as functions of the time step sizes are plotted in log-log scale  for several different tested $\delta$ in Figure \ref{fig3}. It is observed that the stabilized semi-implicit scheme \eqref{sta-semi} achieves half order  in time with several different tested $\delta=0.1, 1e-4$ and $1e-18$.

\begin{figure*}[t!]
	\begin{minipage}[t]{0.32\linewidth}
		\centerline{\includegraphics[scale=0.35]{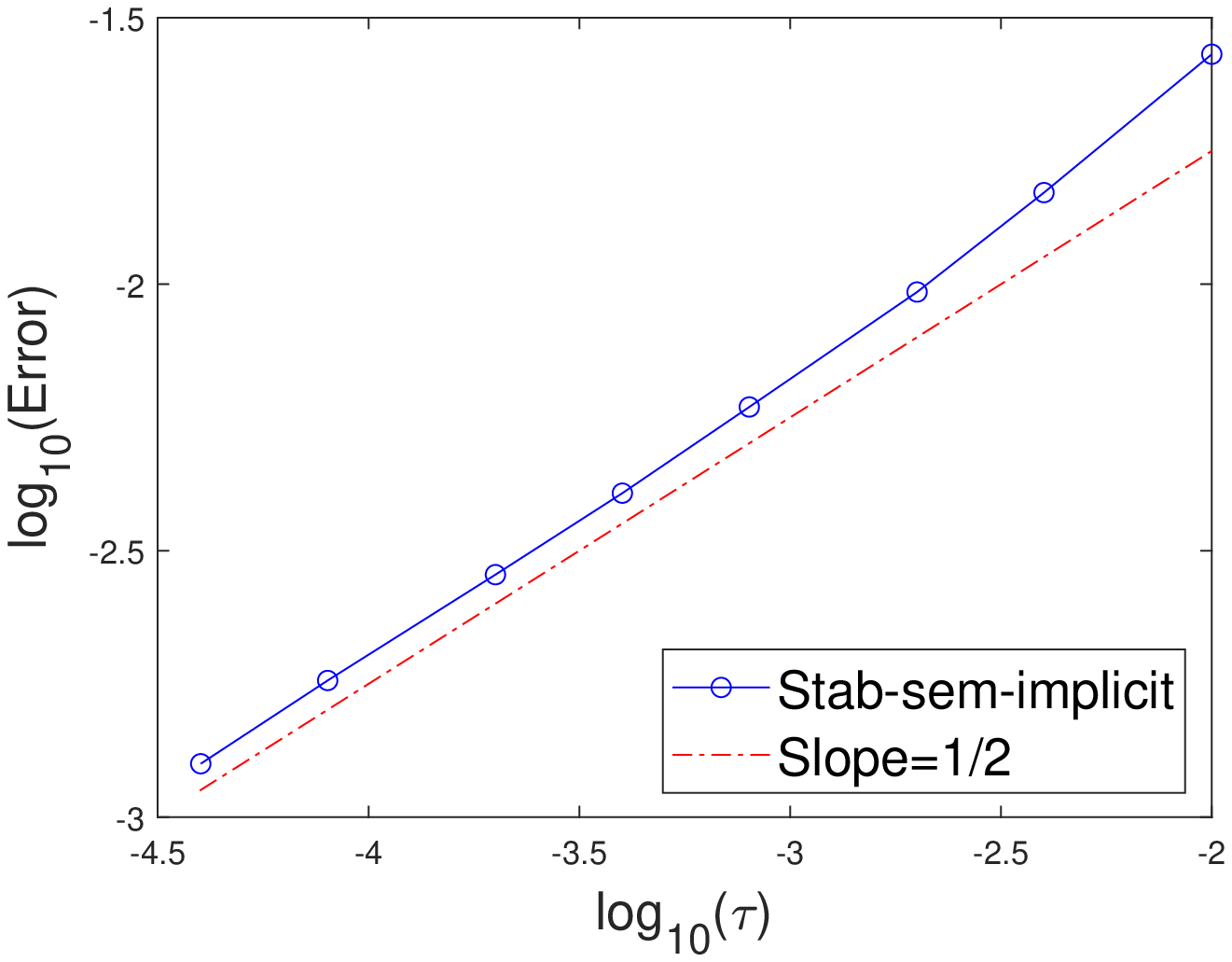}}
		\centerline{(a) $\delta=0.1$}
	\end{minipage}
	\begin{minipage}[t]{0.32\linewidth}
		\centerline{\includegraphics[scale=0.35]{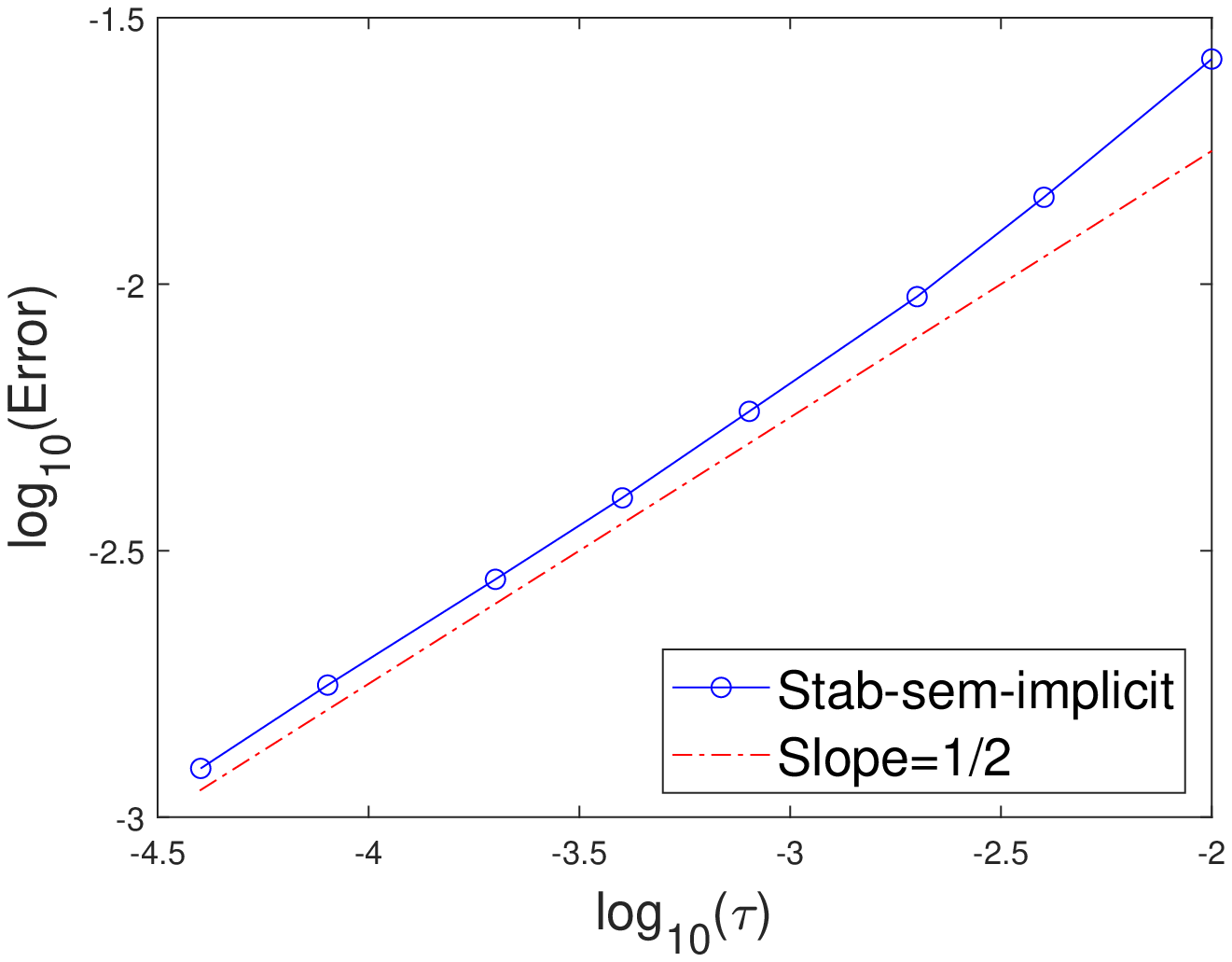}}
		\centerline{(b) $\delta=1e-4$}
	\end{minipage}
	\begin{minipage}[t]{0.32\linewidth}
		\centerline{\includegraphics[scale=0.35]{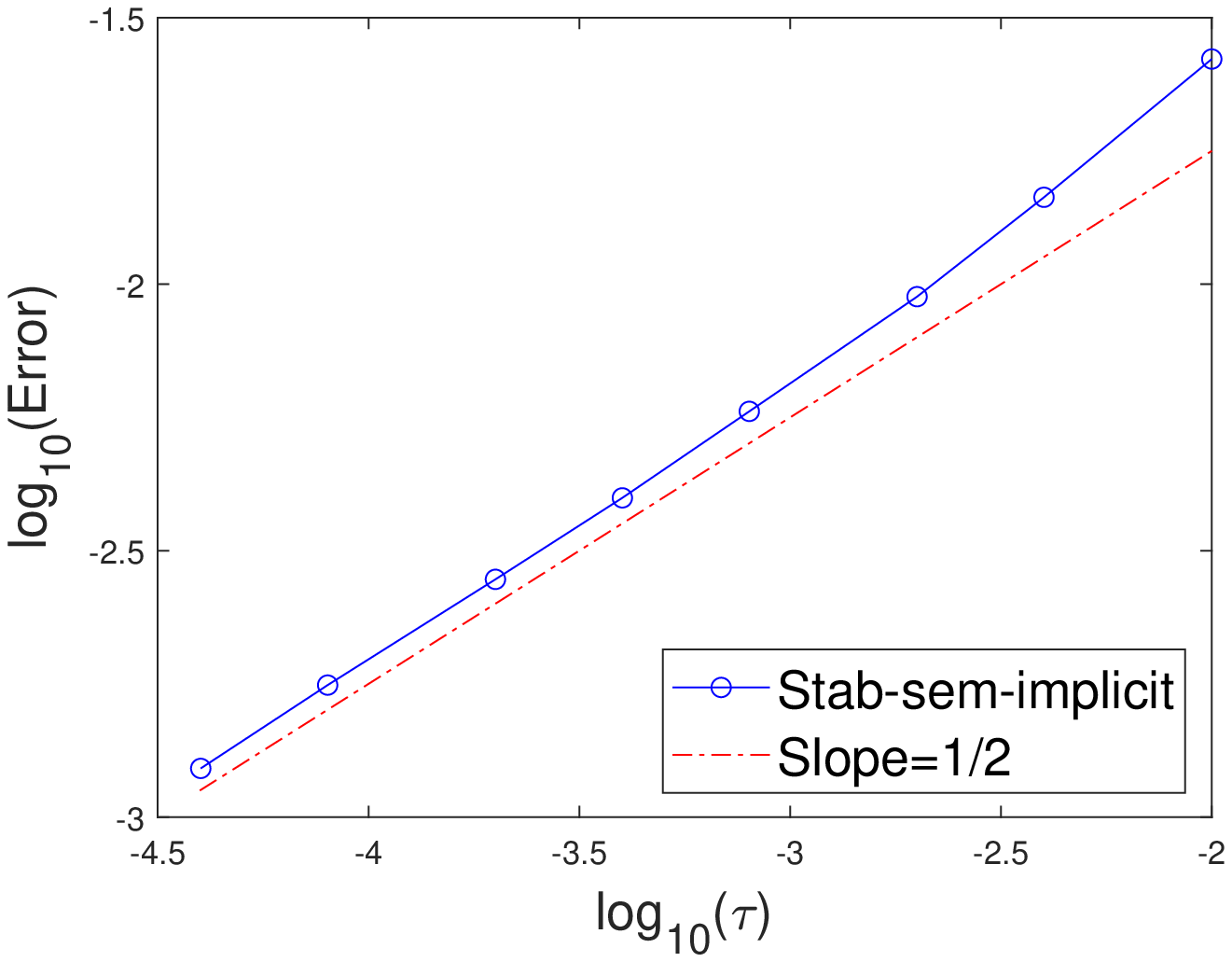}}
		\centerline{(c) $\delta=1e-18$}
	\end{minipage}
	\caption{The error behavior with respect to the time step size for the stabilized semi-implicit scheme $B(u)=\sin^{2}(\pi u)$ at $T=0.1$.
	}\label{fig3}
\end{figure*}

\subsection{Test on the effect of $\delta$ on the regularized model}
In this test, we investigate the influence of the regularized parameter $\delta$ on the model \eqref{log-reg-sac}.  The considered problem is subject to the homogeneous Neumann boundary condition with following initial condition:
$$u(0)=0.5\cos(\pi x)\cos(\pi y)+0.3.$$
The simulation up to $T=300$ is performed by the stabilized semi-implicit scheme \eqref{sta-semi} with $\tau=0.01$ and $128\times 128$ Legendre-type basis modes.
In Figure \ref{fig4}, the evolutions of the computed averaged energy over 500 realizations are plotted for the model with several different regularized parameters, that is $\delta= 1e-2, 1e-4, 1e-6, 1e-8, 1e-12,$ and $1e-18$. It is observed that the computed average energies converge to a limited one as $\delta\rightarrow0.$

\begin{figure*}[htbp]
	\begin{minipage}[t]{0.49\linewidth}
		\centerline{\includegraphics[scale=0.45]{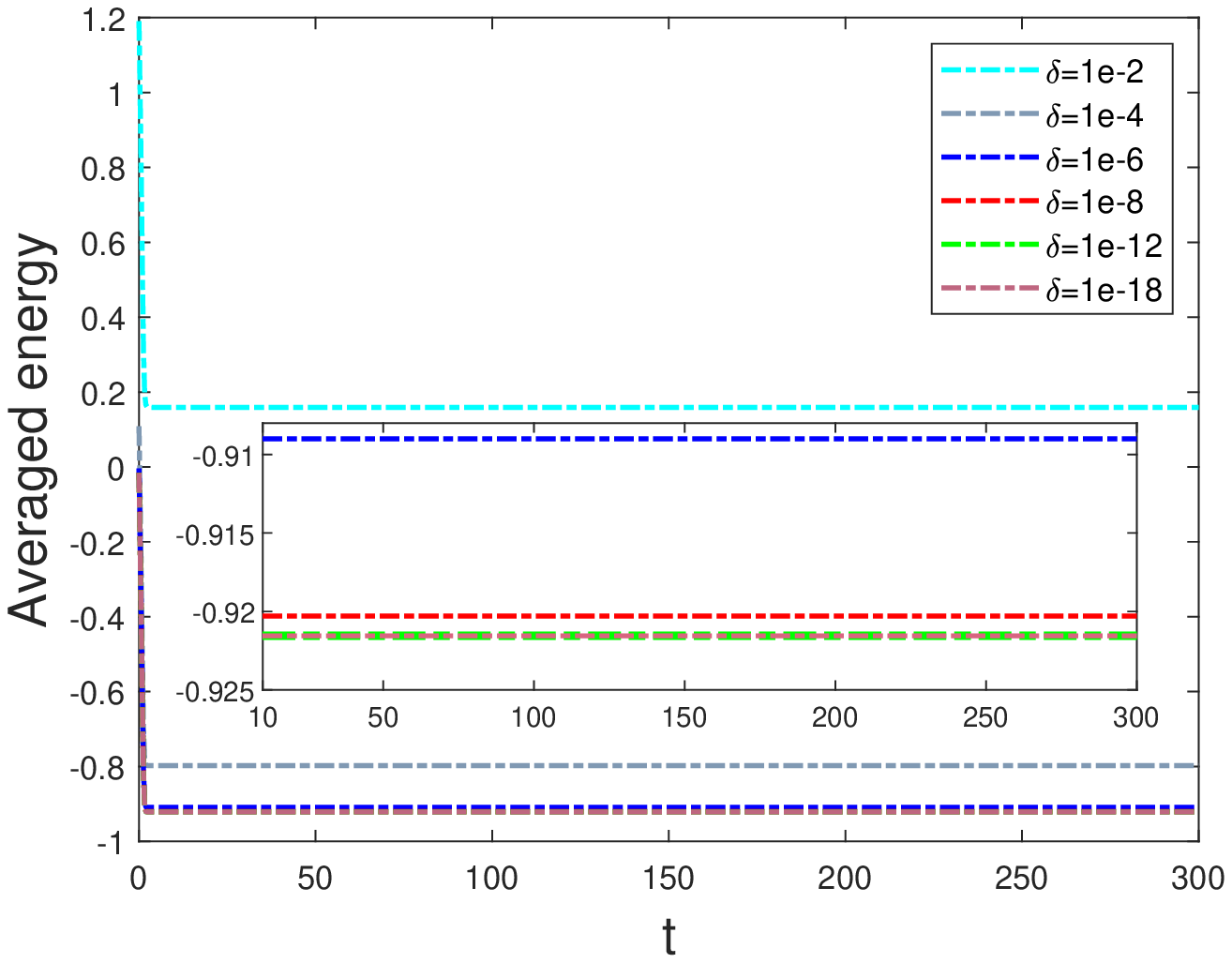}}
		\centerline{(a) energy evolution in time }
	\end{minipage}
	\begin{minipage}[t]{0.49\linewidth}
		\centerline{\includegraphics[scale=0.45]{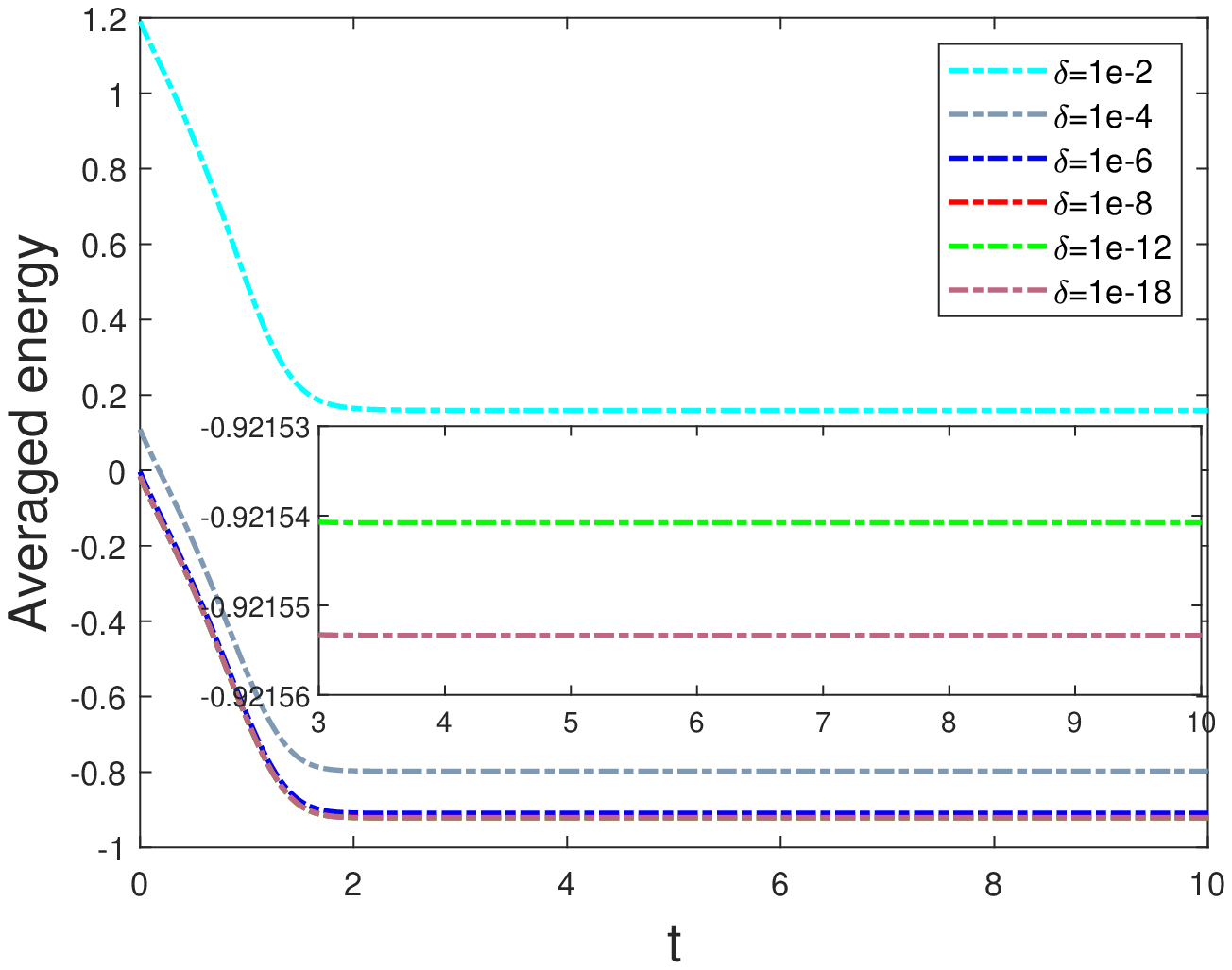}}
		\centerline{(b) zoom in at $[0,10]$}
	\end{minipage}
	\caption{The averaged energy evolution in time of \eqref{log-reg-sac} with $\sigma=0.1$, $\epsilon=1e-4$ and several different values of $\delta= 1e-2, 1e-4, 1e-6, 1e-8, 1e-12, 1e-18$.
	}\label{fig4}
\end{figure*}

\begin{figure*}[!b]
	\begin{minipage}[t]{0.32\linewidth}
		\centerline{\includegraphics[scale=0.35]{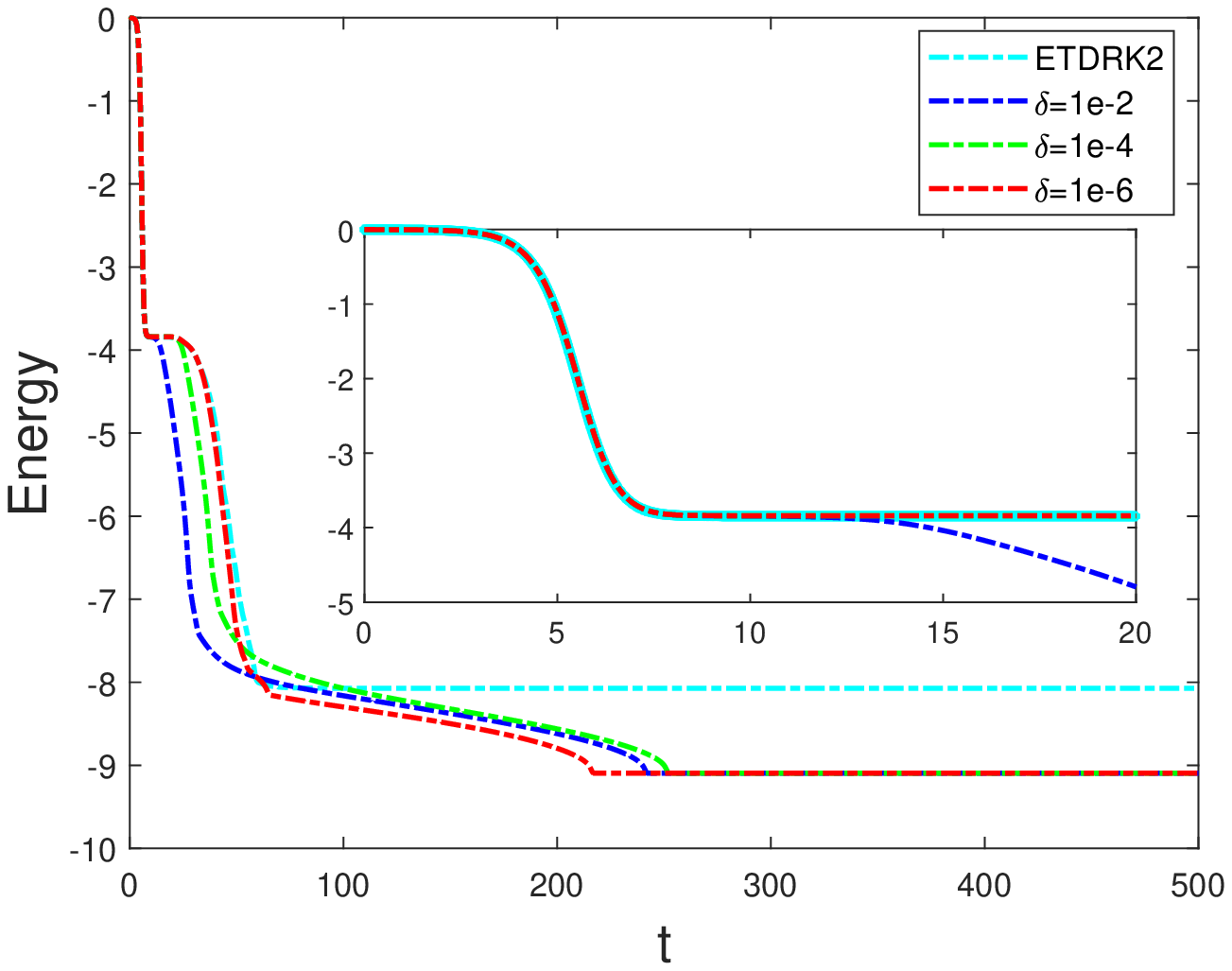}}
		\centerline{ (a) }
	\end{minipage}
	\begin{minipage}[t]{0.32\linewidth}
		\centerline{\includegraphics[scale=0.35]{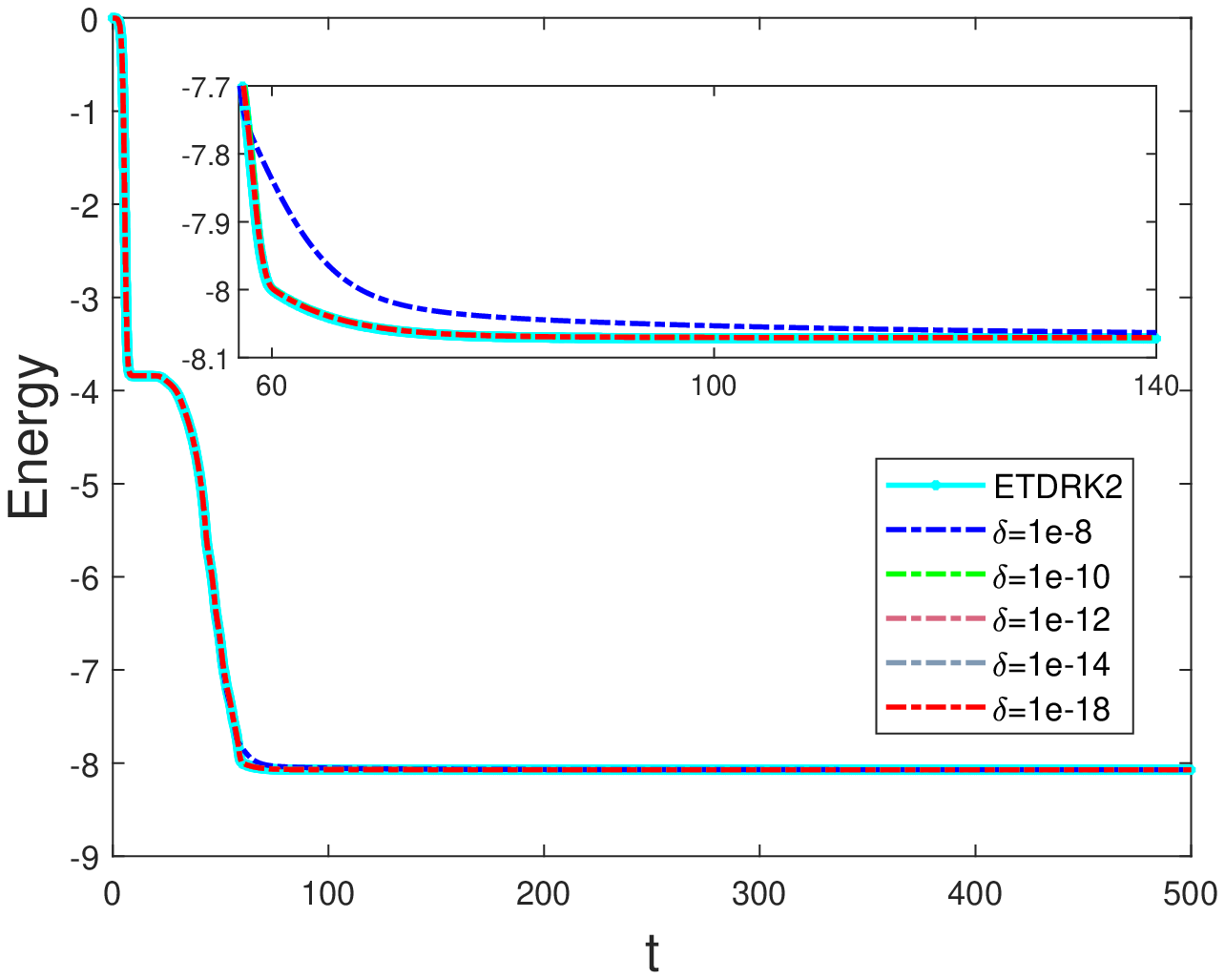}}
		\centerline{ (b)}
	\end{minipage}
	\begin{minipage}[t]{0.32\linewidth}
		\centerline{\includegraphics[scale=0.35]{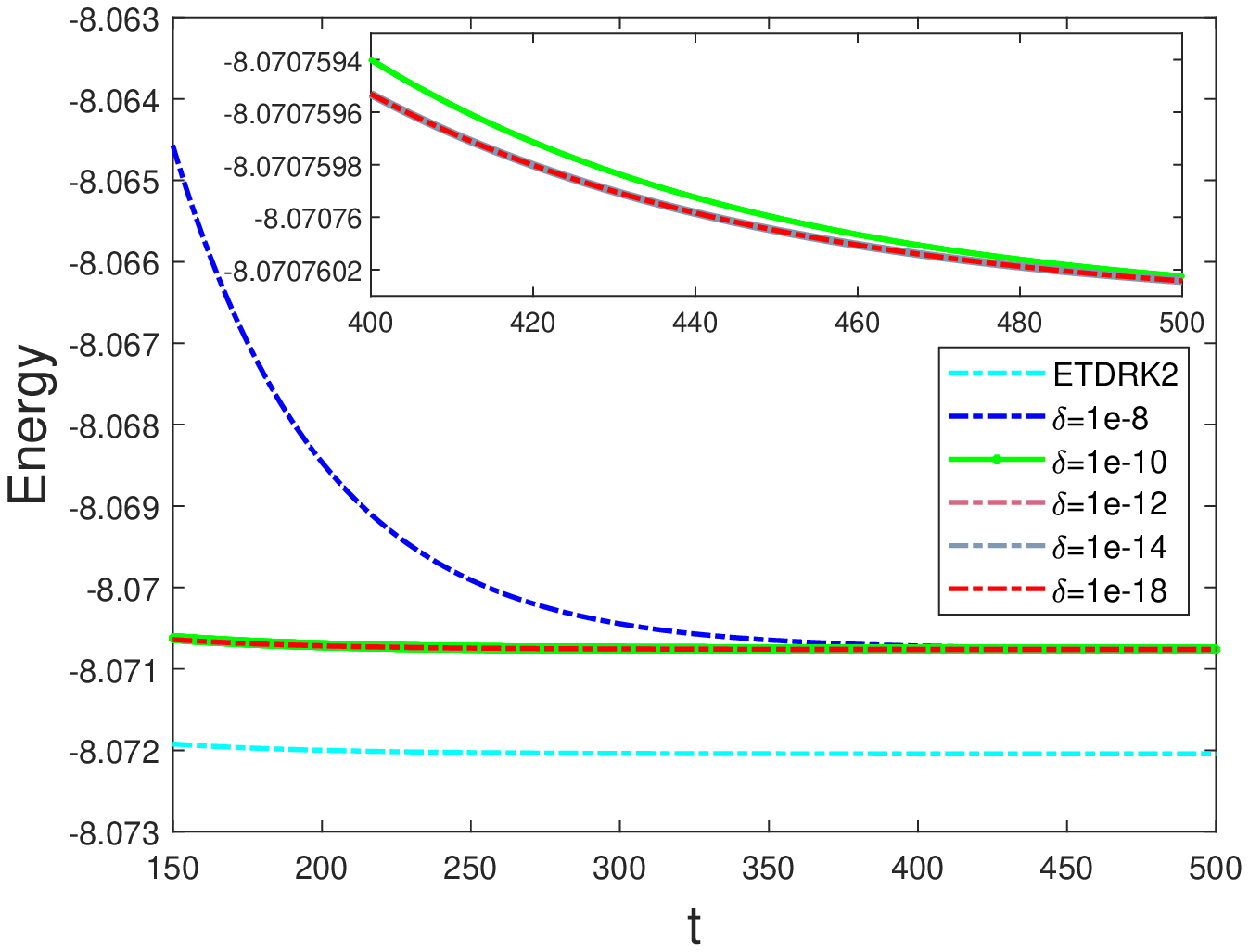}}
		\centerline{ (c)}
	\end{minipage}
	\caption{The energy evolution in time of \eqref{log-reg-sac} with $\delta=1e-18$, $\sigma=1e-2$ and several different values of $\epsilon= 1e-n$ with $n=2,4,6,8,10,12,14,16,18$.
	}\label{fig5}
\end{figure*}

\subsection{Coarsening dynamics}

The coarsening dynamics of the regularized model \eqref{log-reg-sac} and the deterministic model are numerically investigated in this part. The related parameters in \eqref{log-reg-sac} are set to be $\sigma=0.01, \delta=1e-18$, and the periodic boundary condition is considered with the initial data $u(0)=0.01\cos(\pi x)\sin(\pi y)$.

\begin{figure*}[htbp]
	\centerline{\includegraphics[scale=0.18]{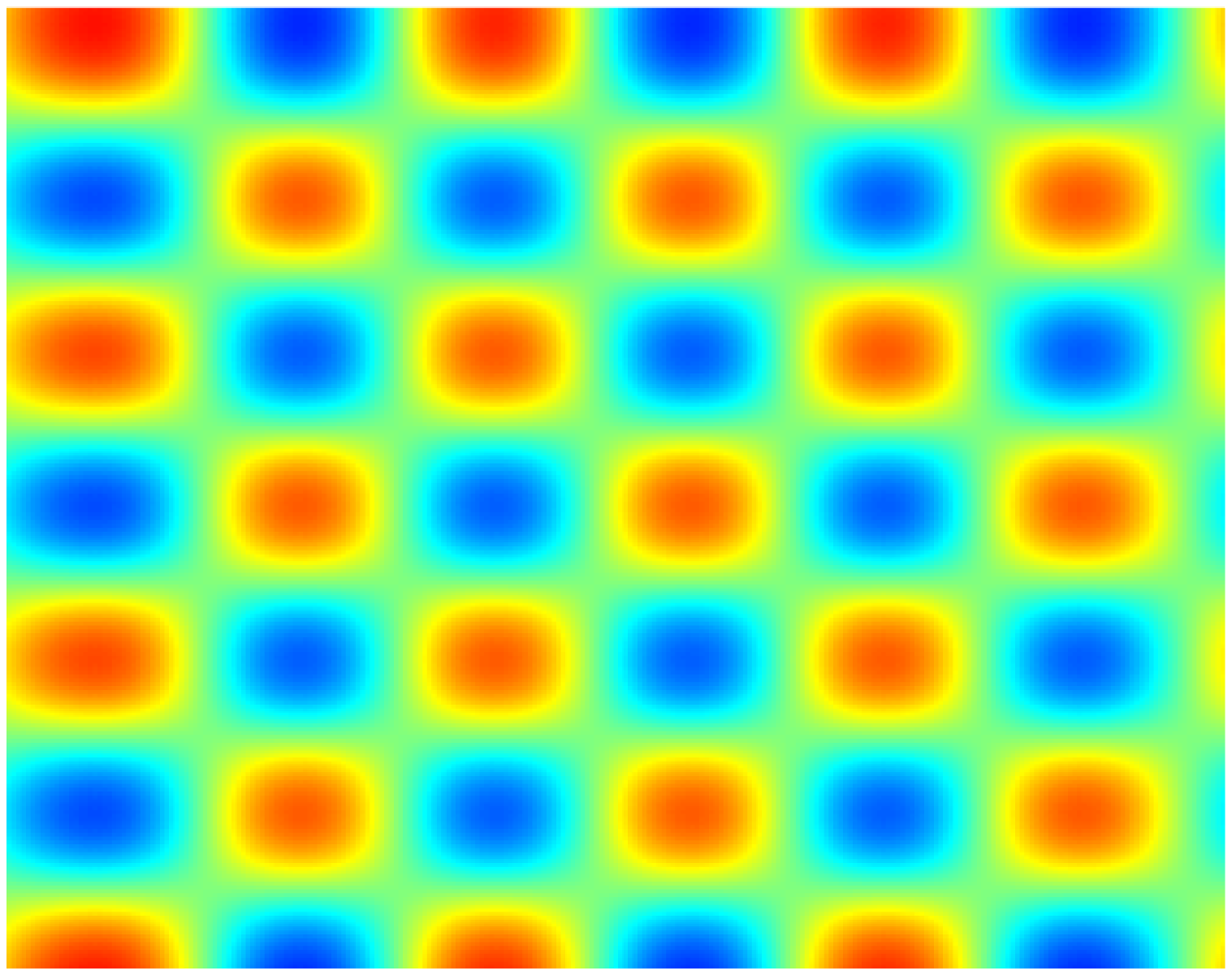}\includegraphics[scale=0.18]{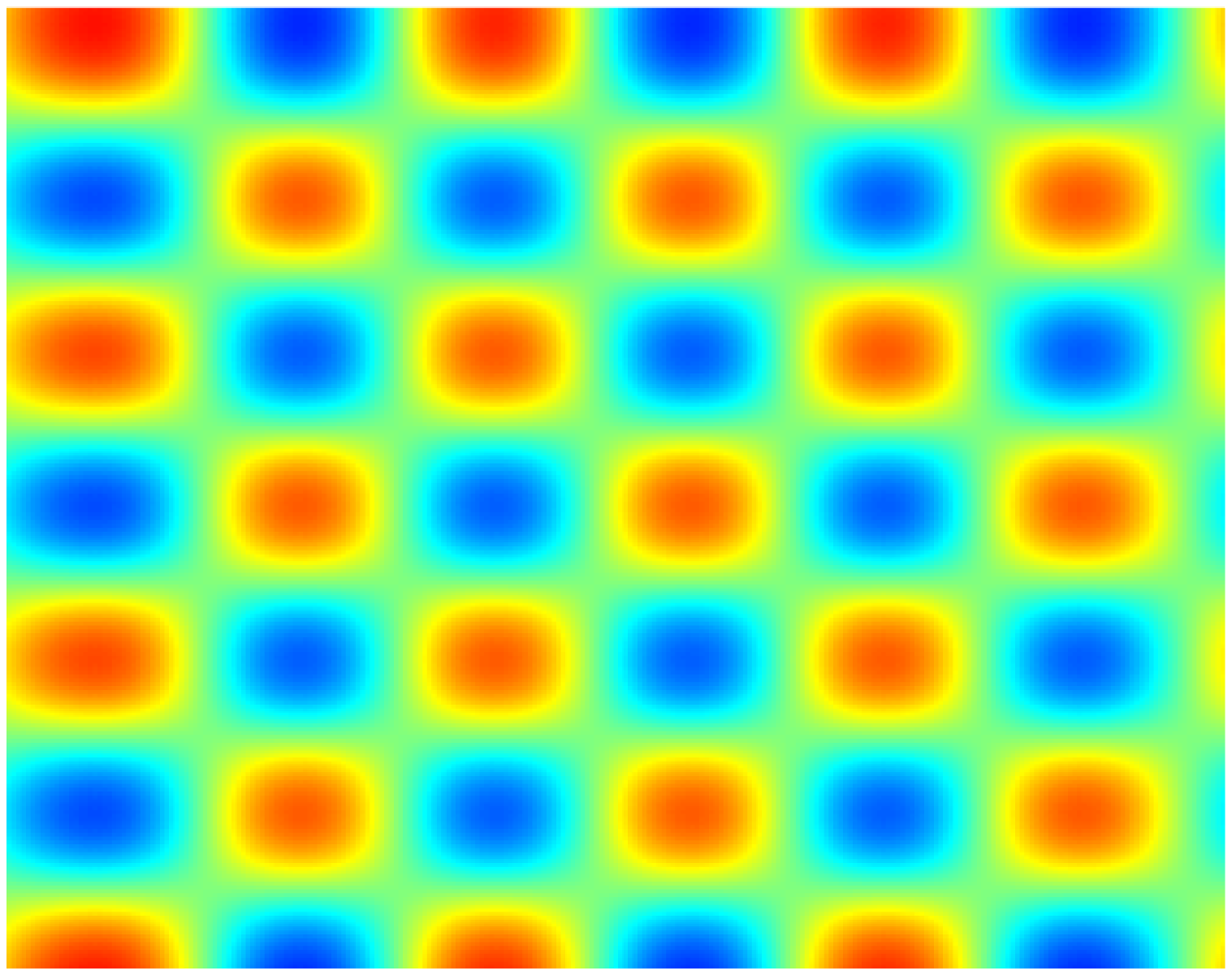}\includegraphics[scale=0.18]{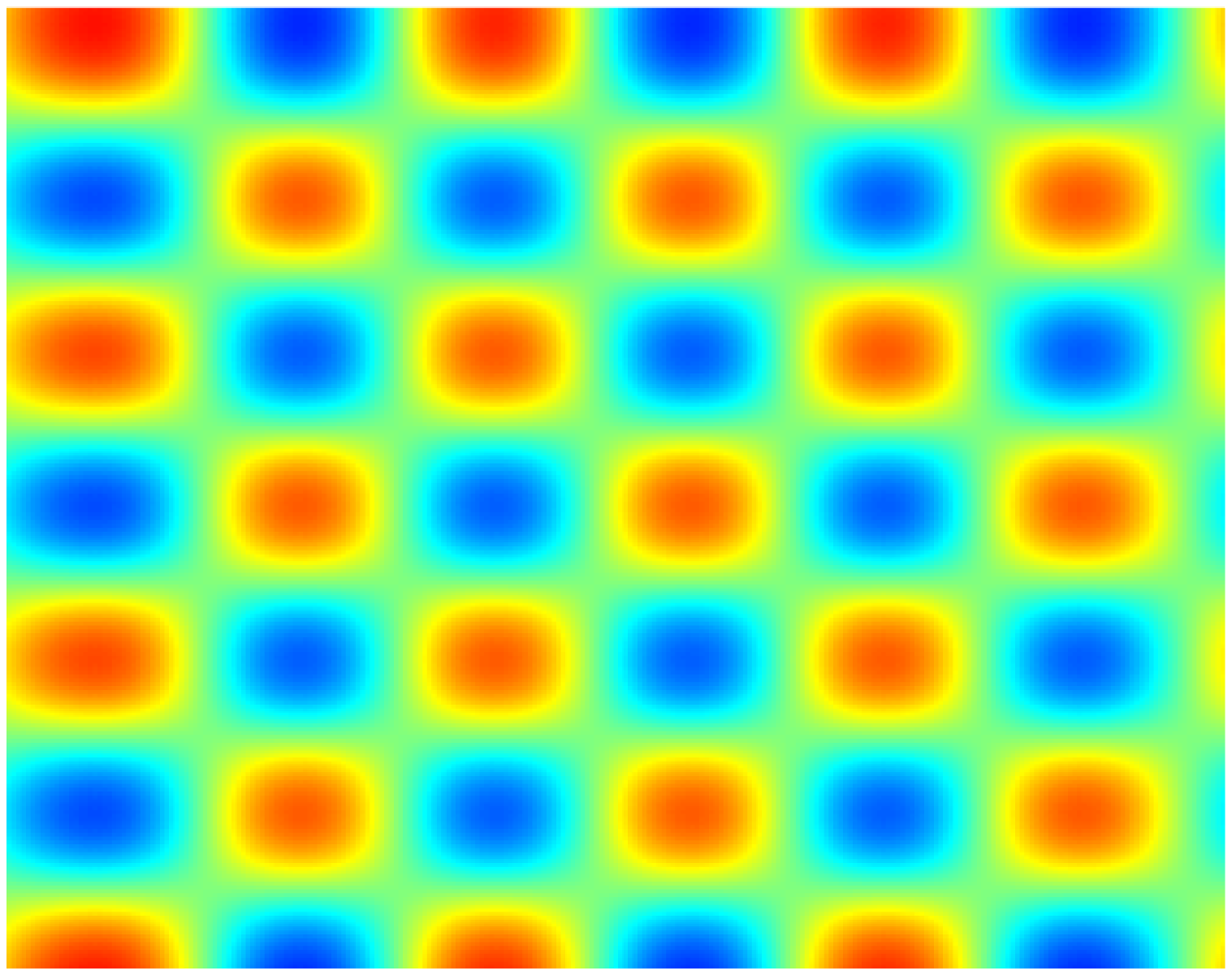}\includegraphics[scale=0.18]{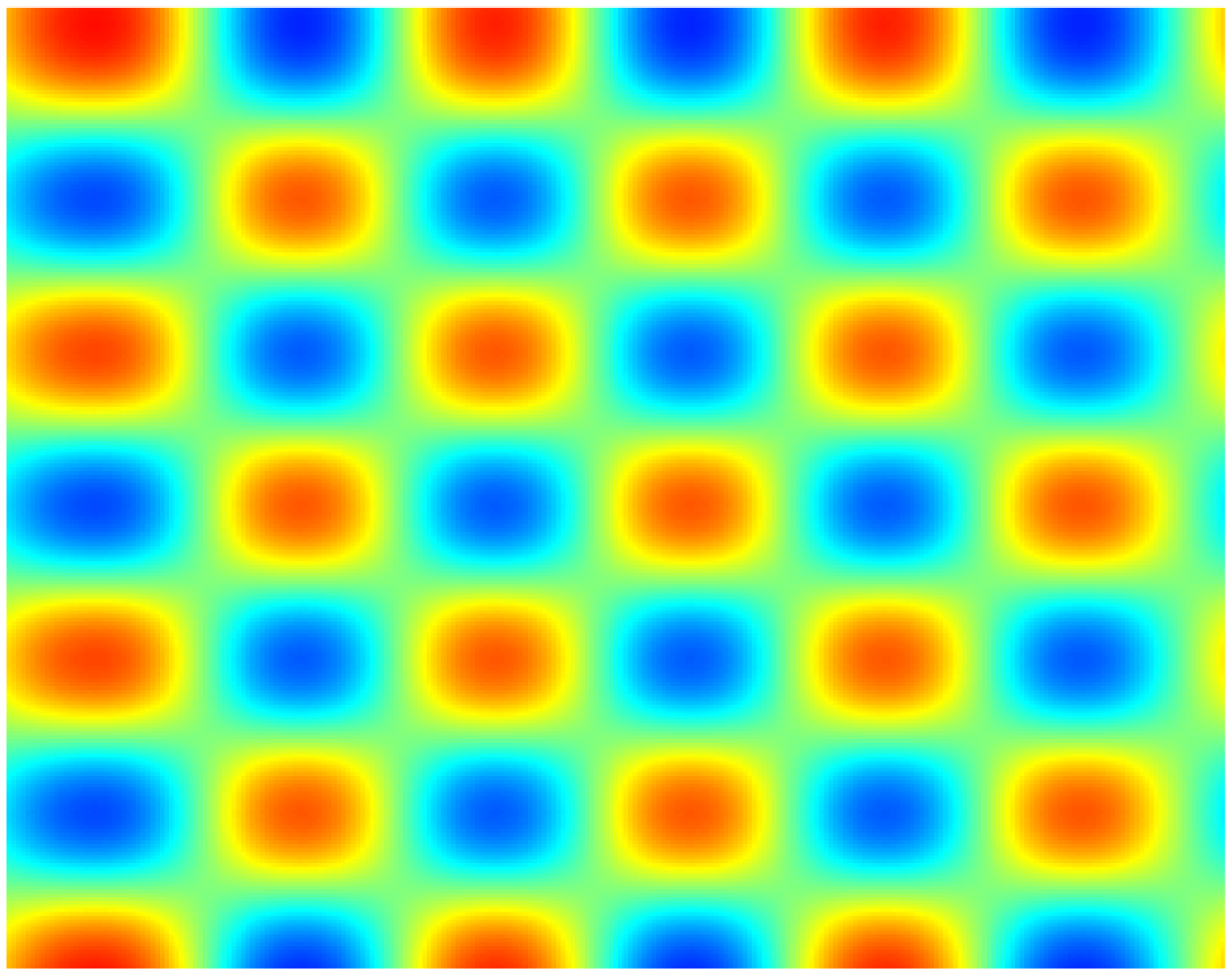}}
	\vskip -2mm
	\centerline{\includegraphics[scale=0.18]{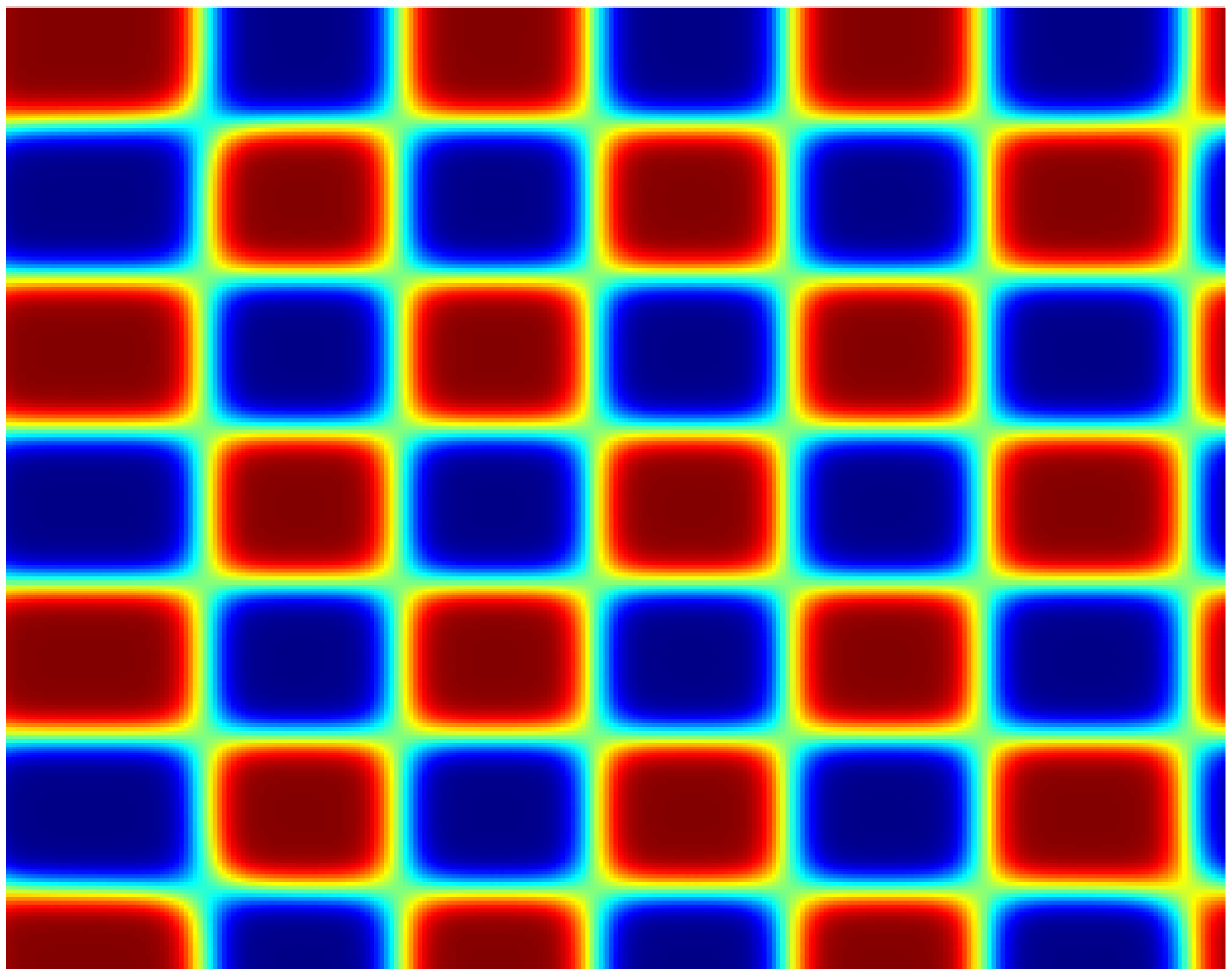}\includegraphics[scale=0.18]{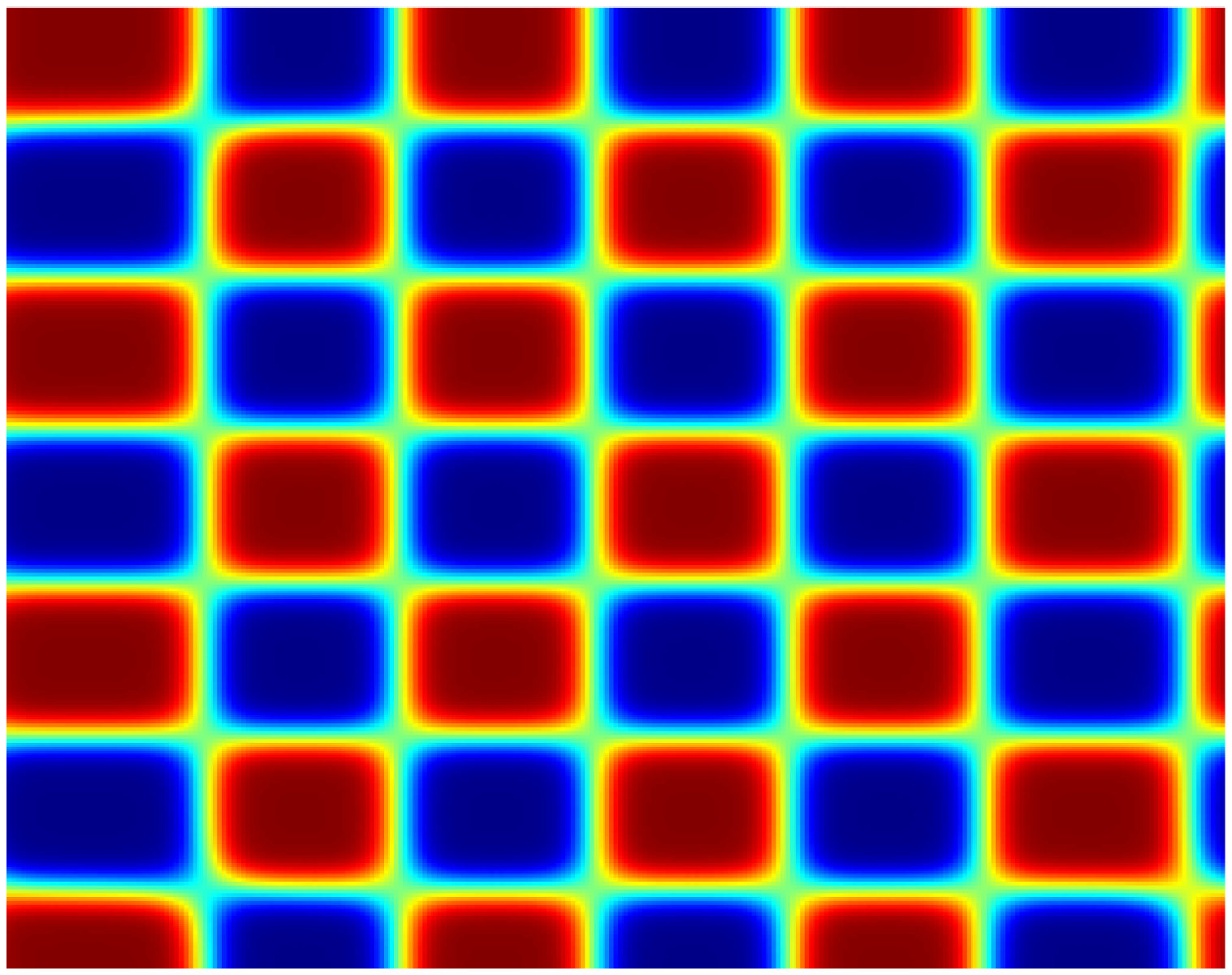}\includegraphics[scale=0.18]{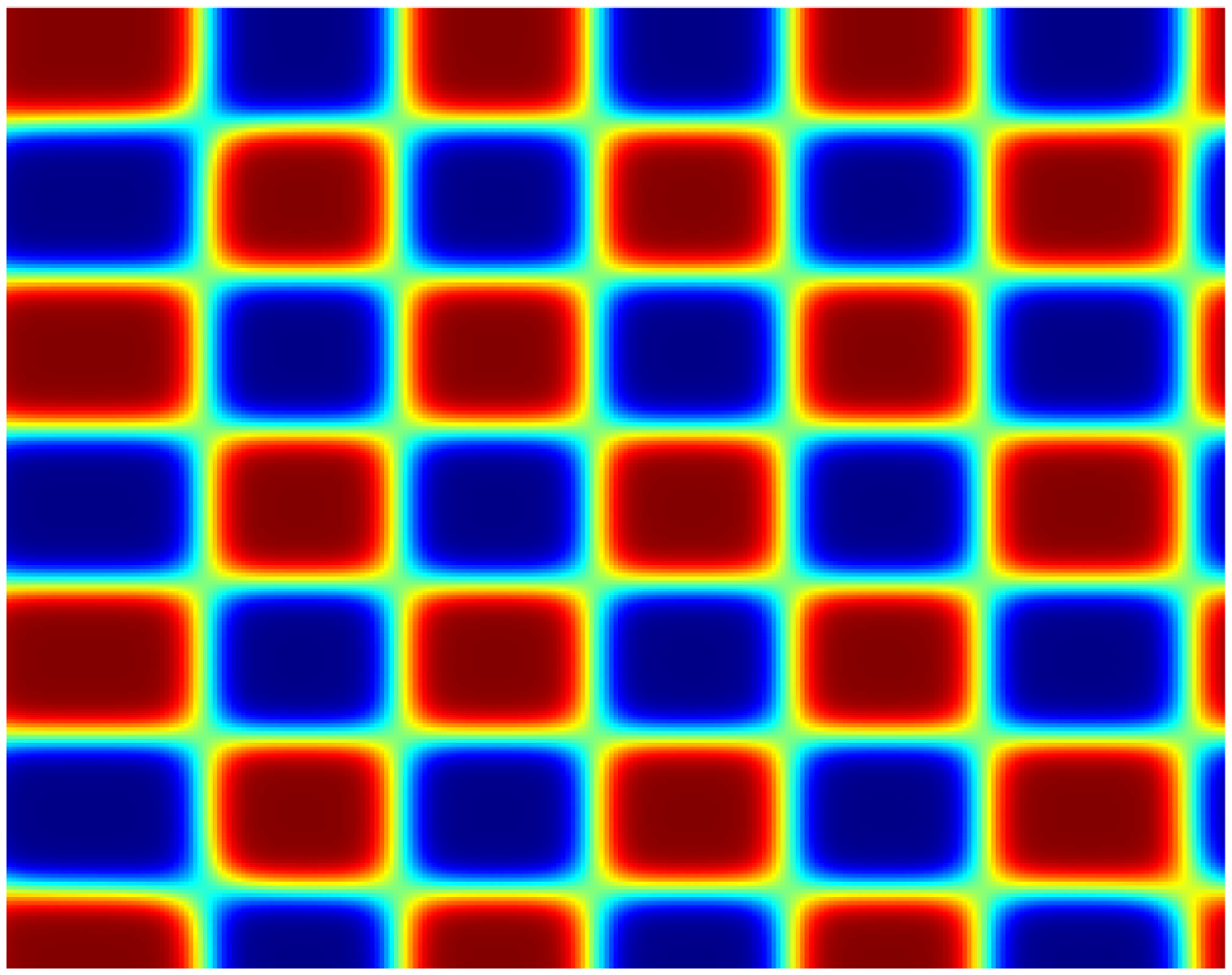}\includegraphics[scale=0.18]{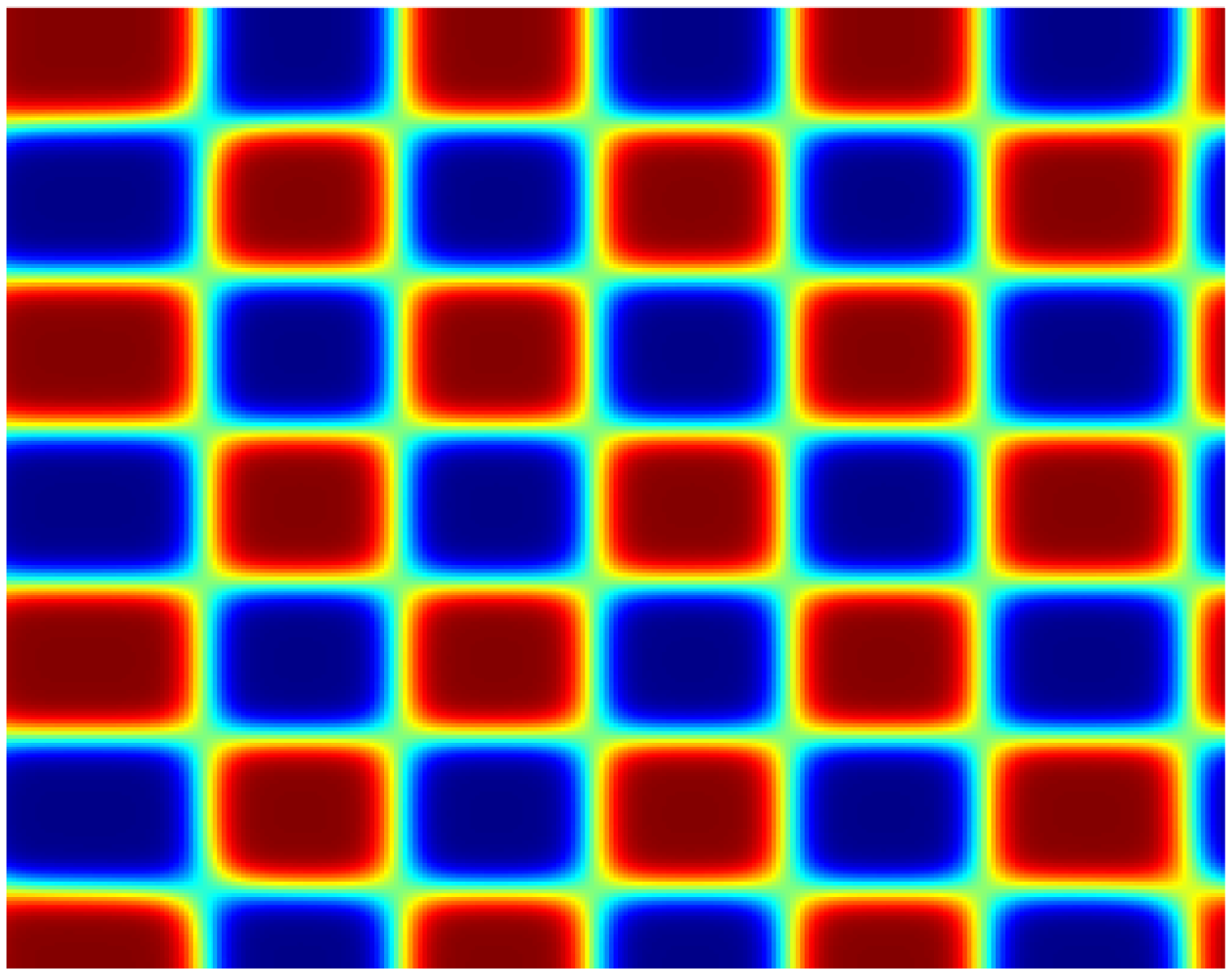}}
	\vskip -2mm
	\centerline{\includegraphics[scale=0.18]{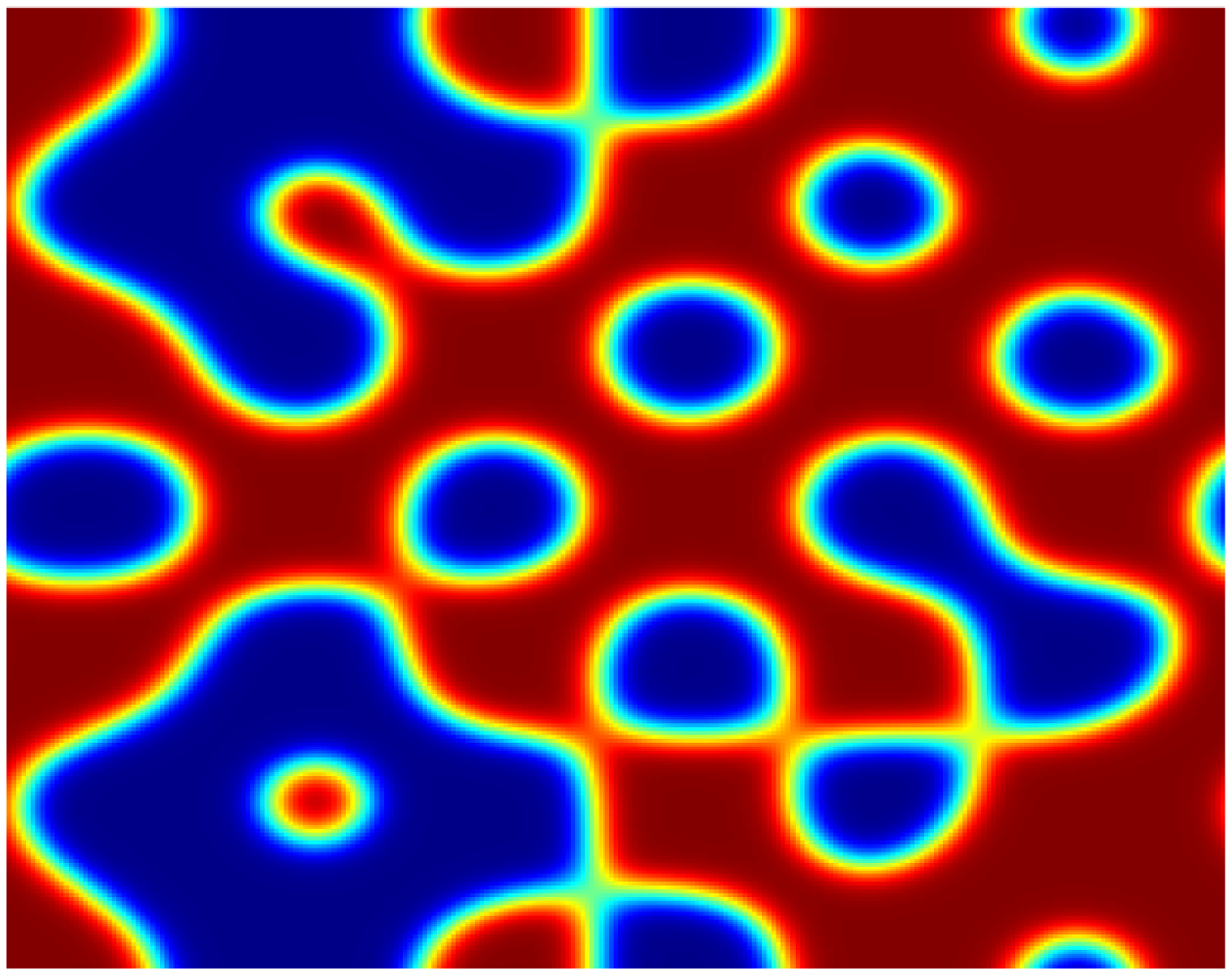}\includegraphics[scale=0.18]{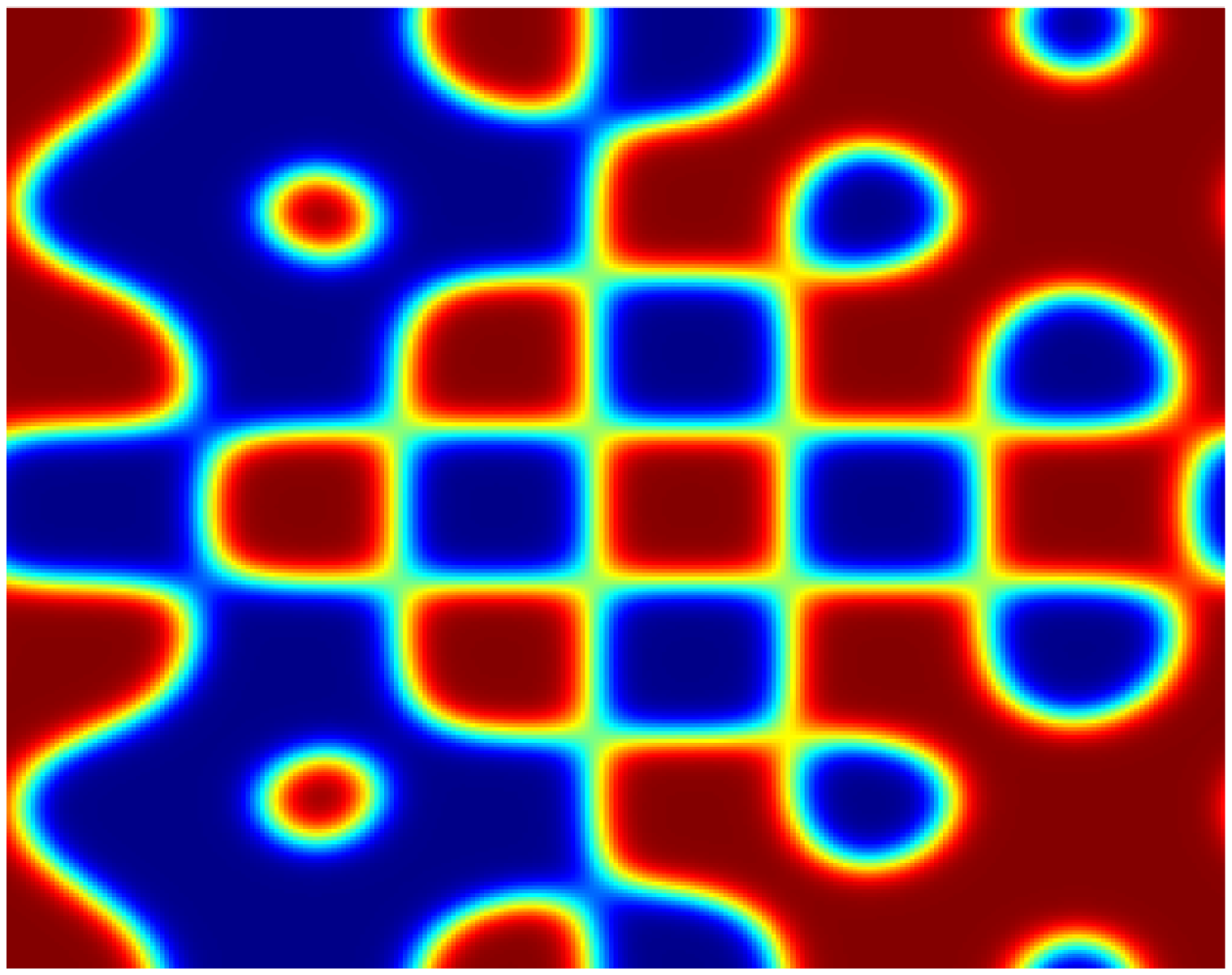}\includegraphics[scale=0.18]{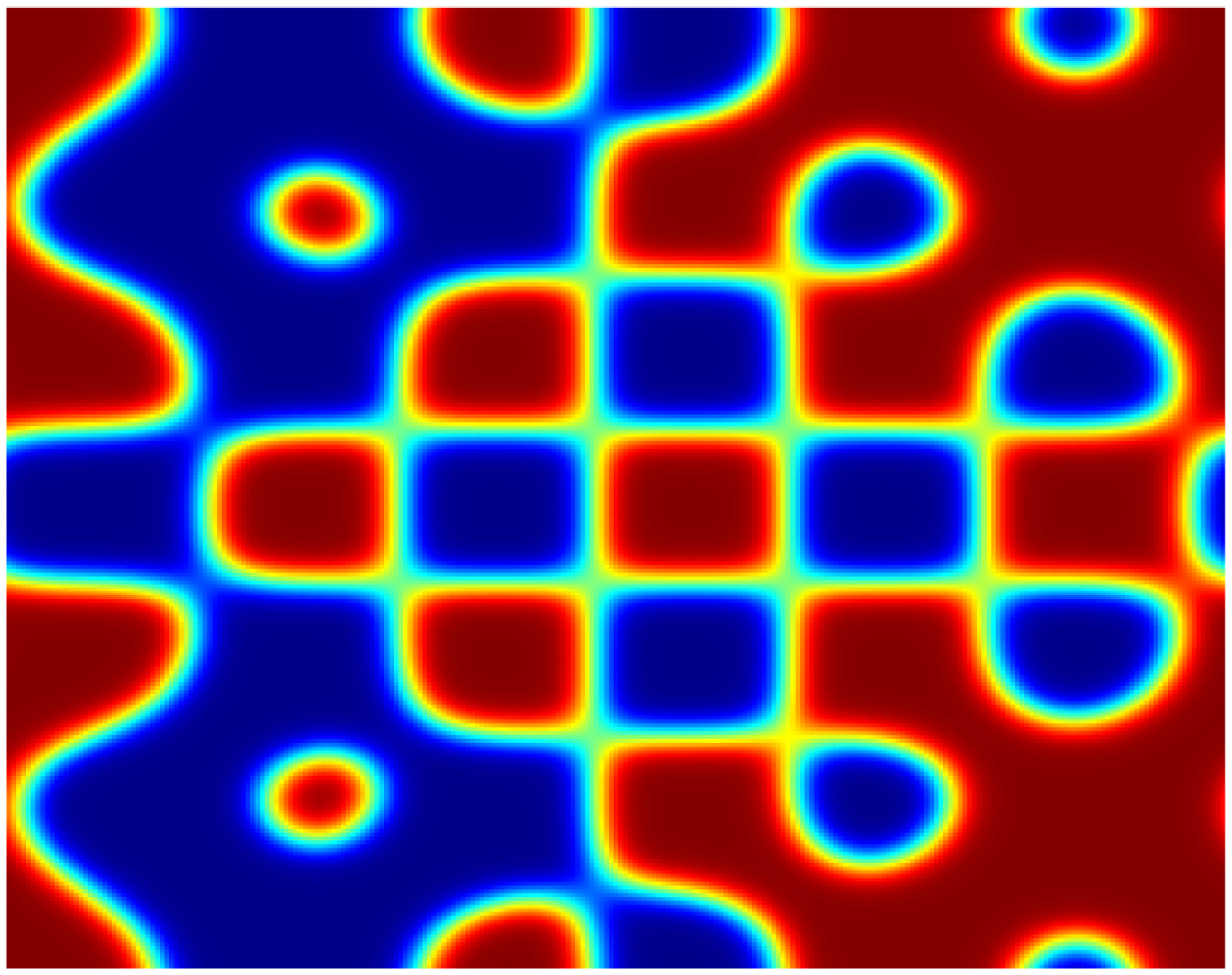}\includegraphics[scale=0.18]{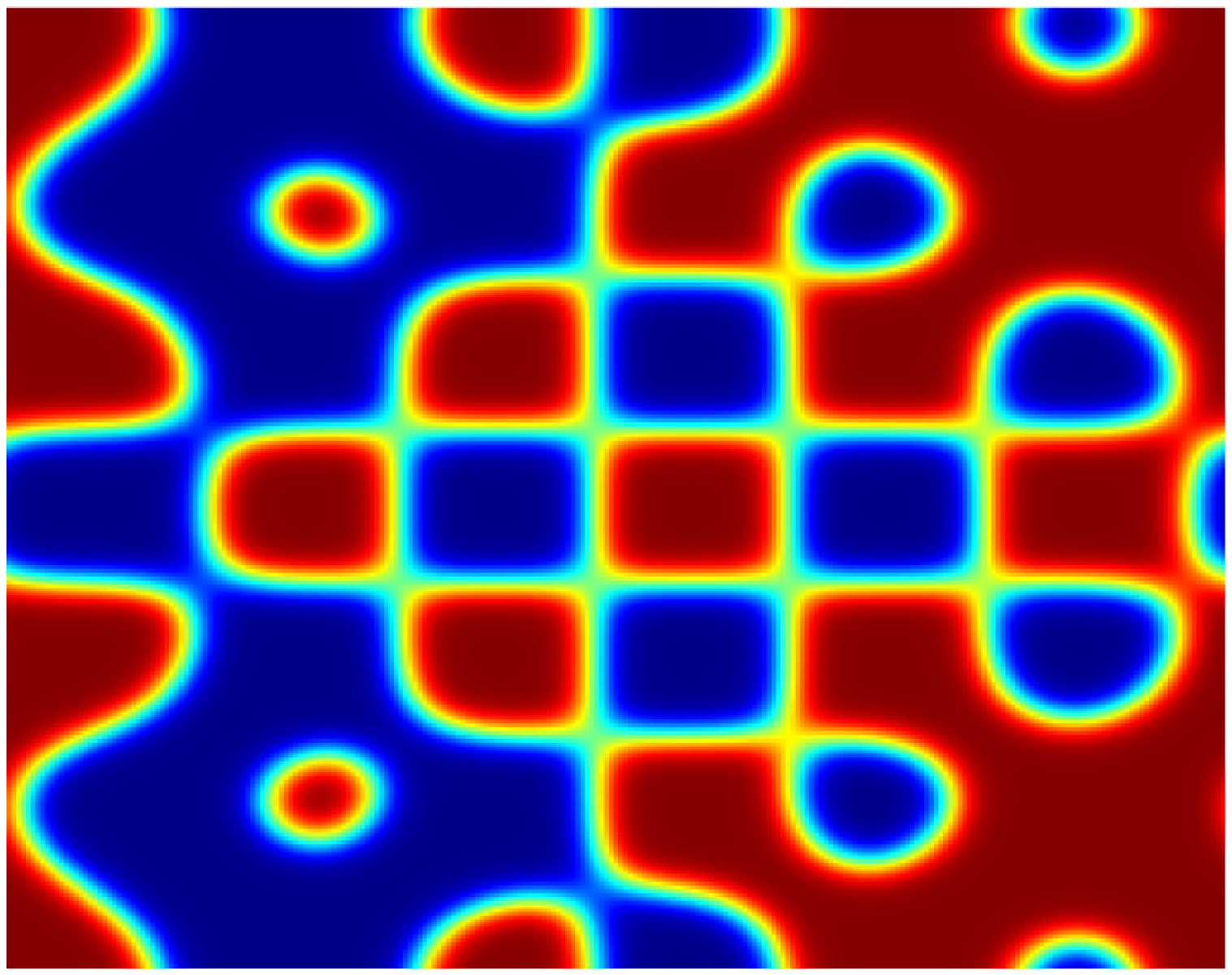}}
	\vskip -2mm
	\centerline{\includegraphics[scale=0.18]{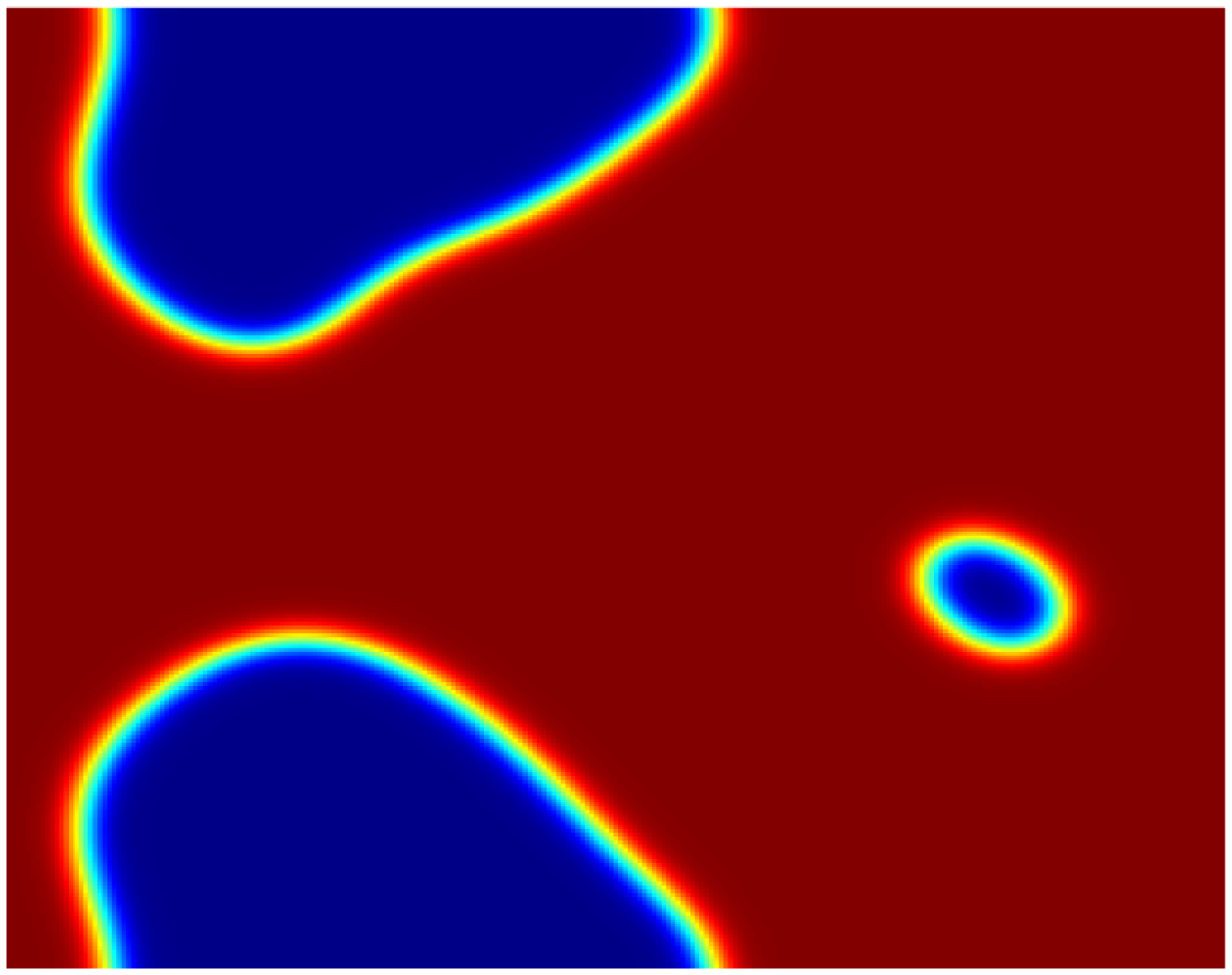}\includegraphics[scale=0.18]{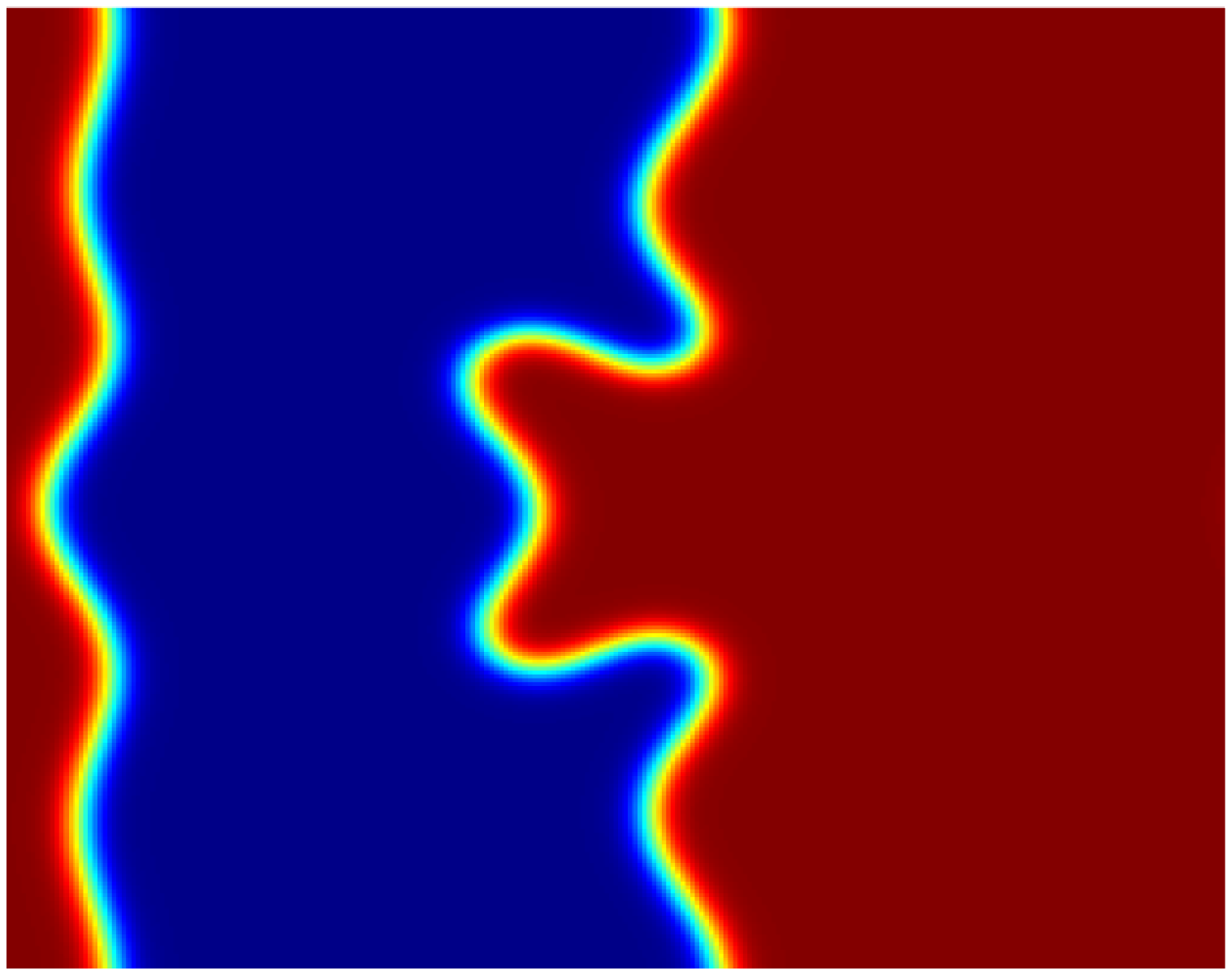}\includegraphics[scale=0.18]{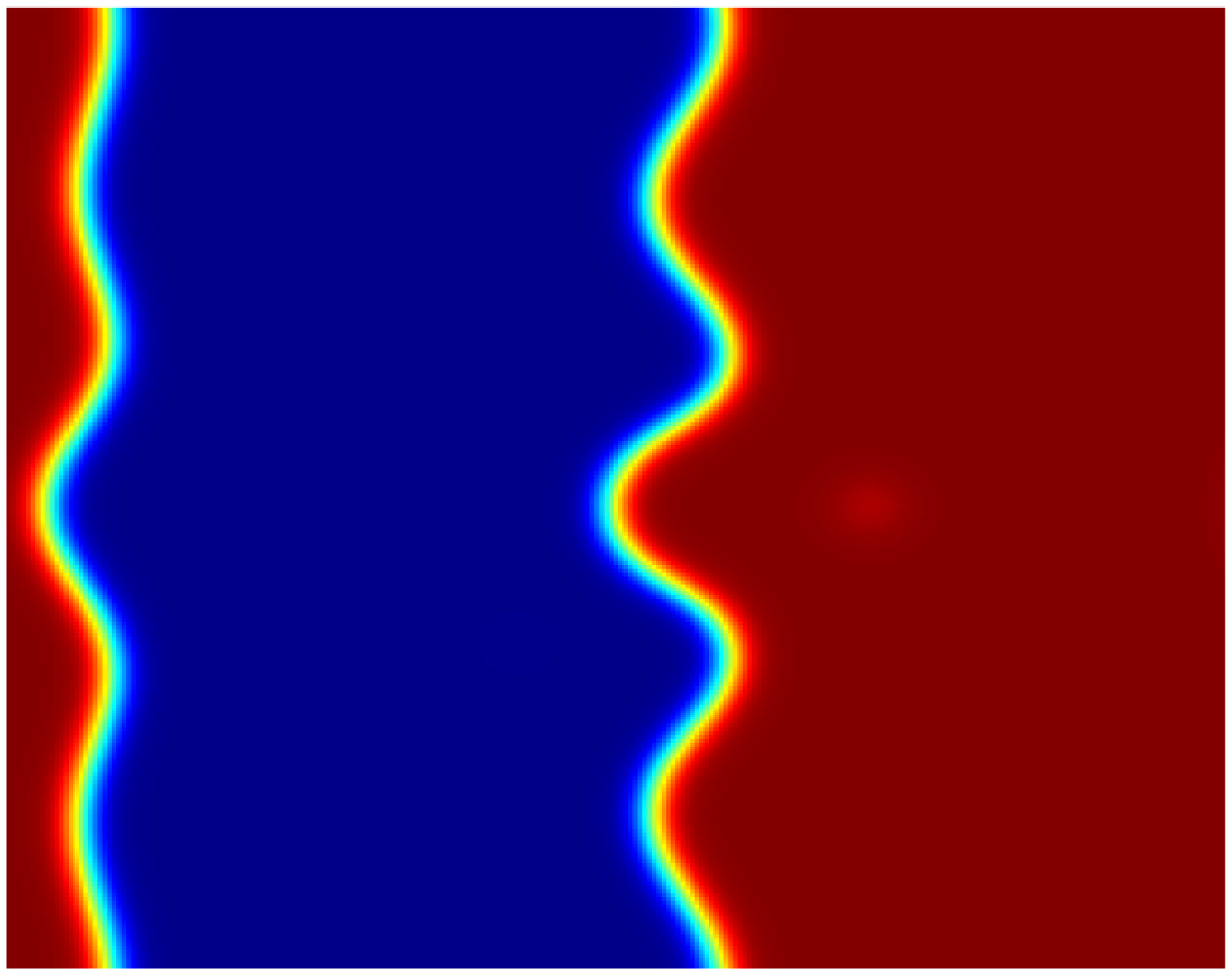}\includegraphics[scale=0.18]{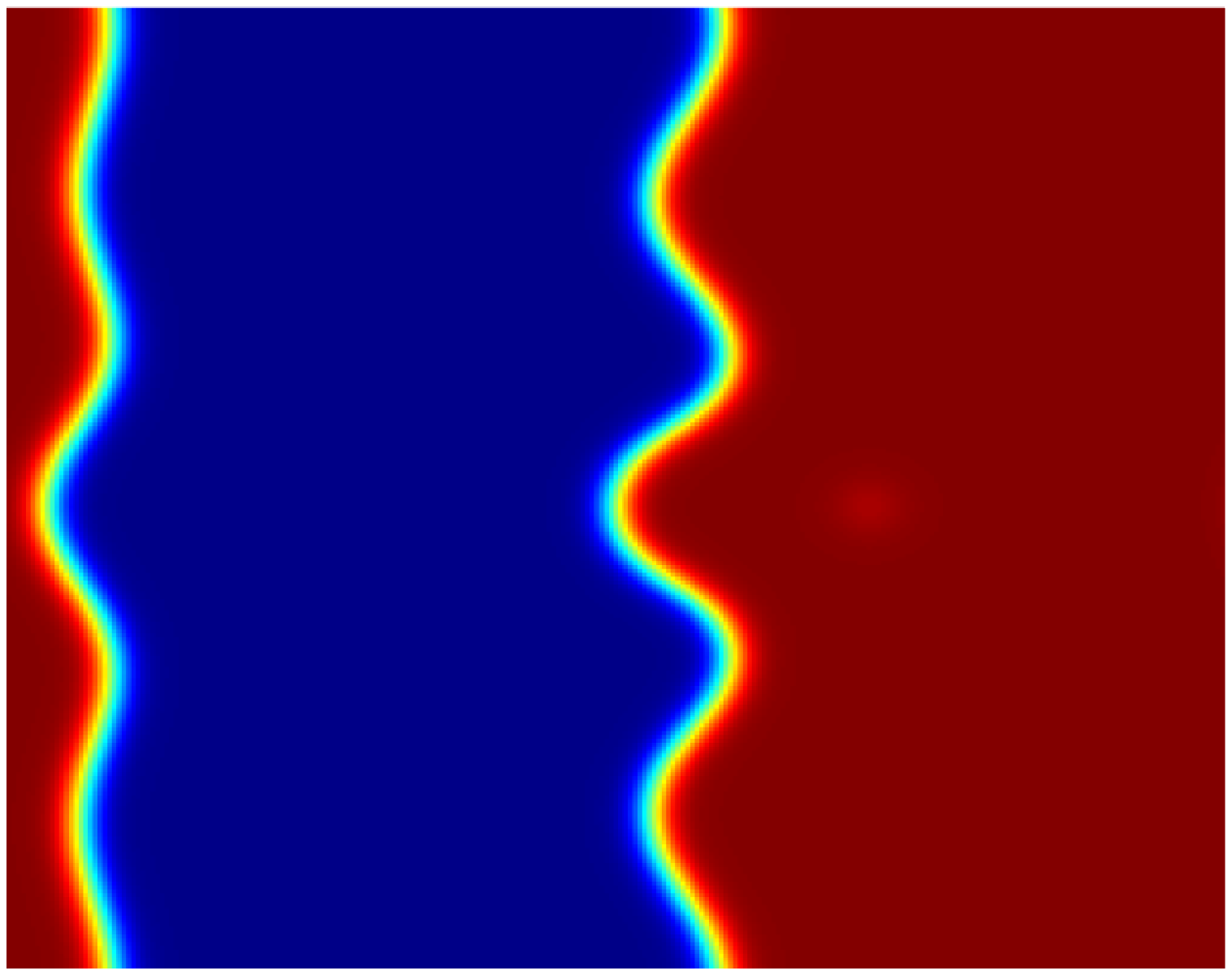}}
	\vskip -2mm
	\centerline{\includegraphics[scale=0.18]{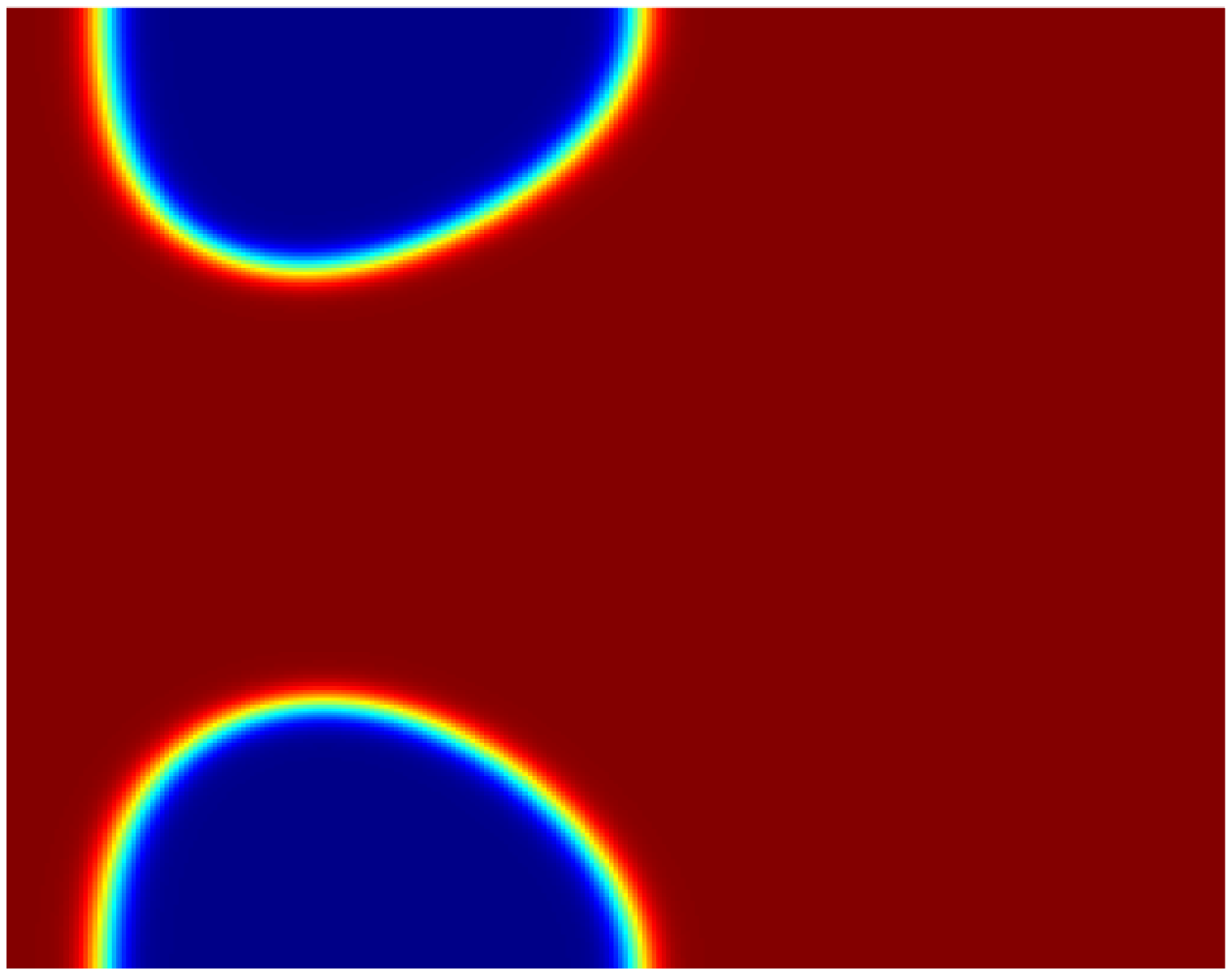}\includegraphics[scale=0.18]{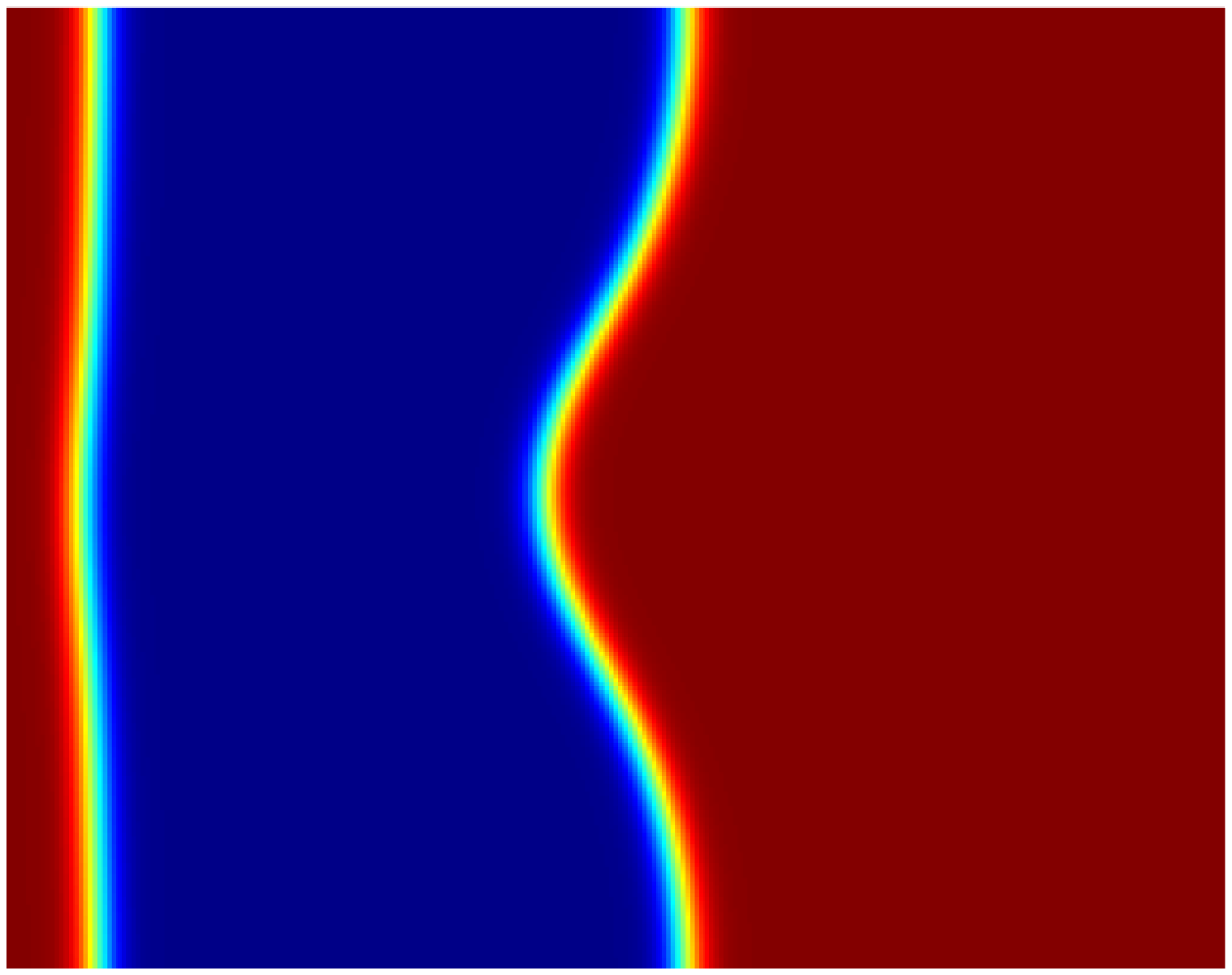}\includegraphics[scale=0.18]{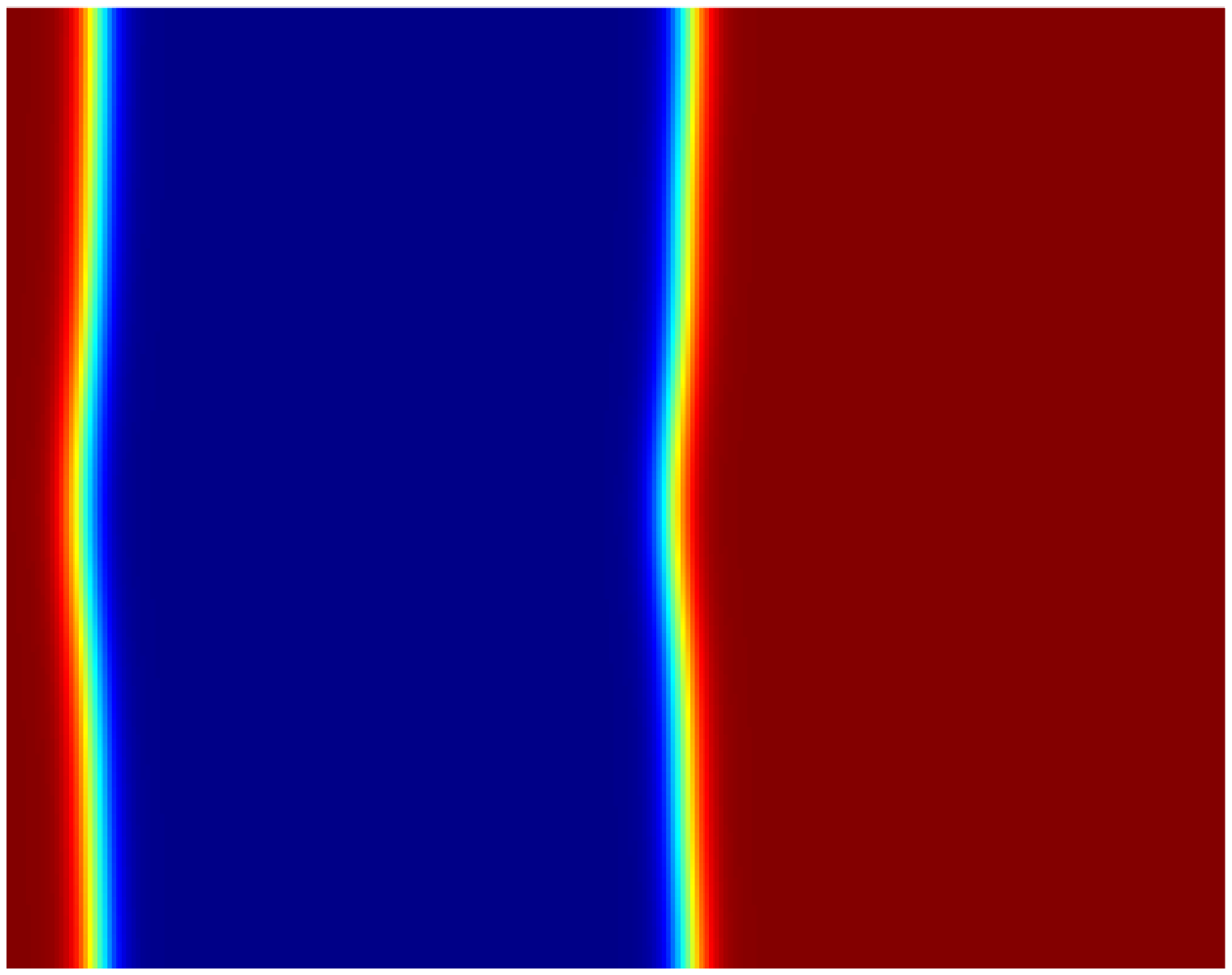}\includegraphics[scale=0.18]{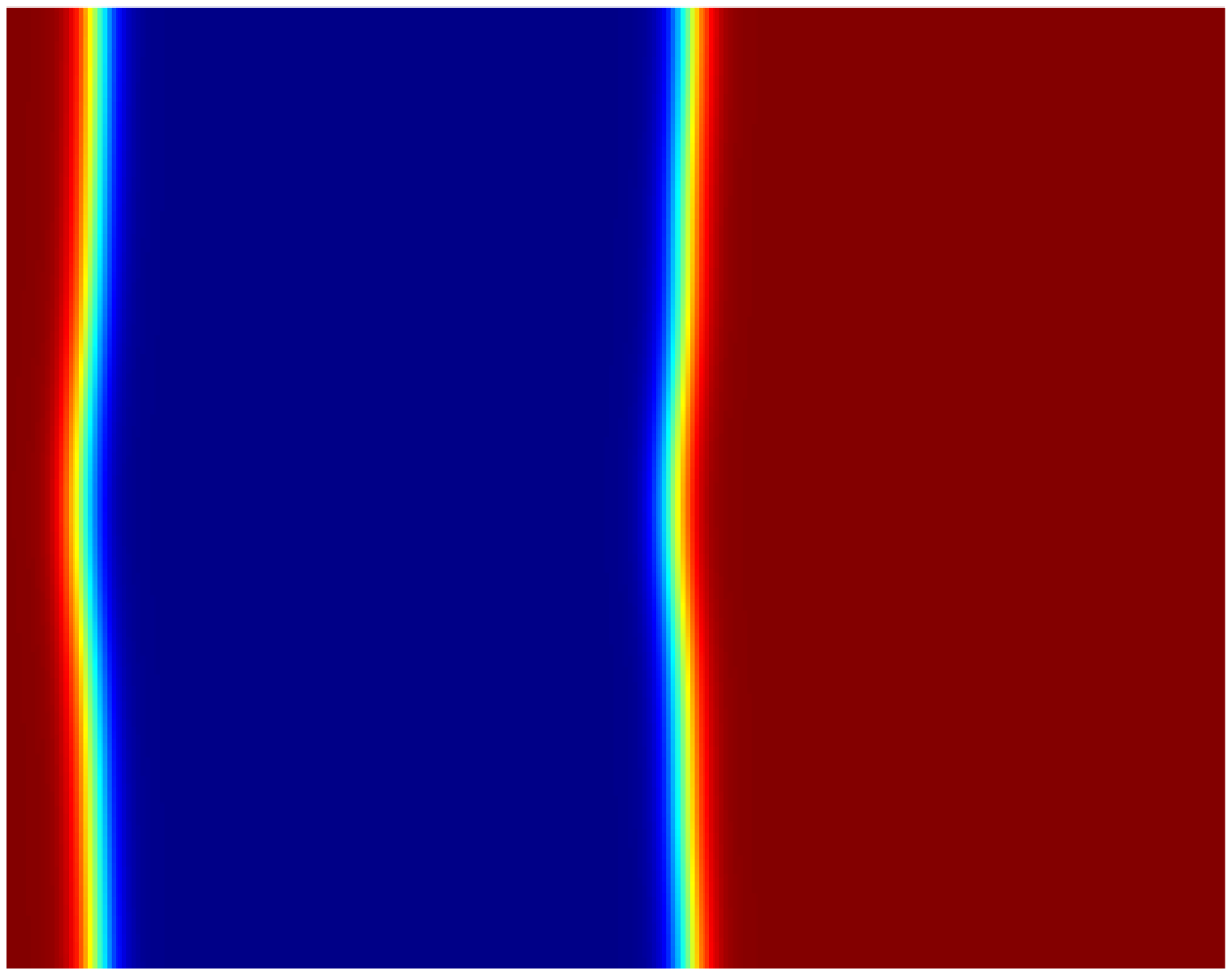}}
	\vskip -2mm
	\centerline{\includegraphics[scale=0.18]{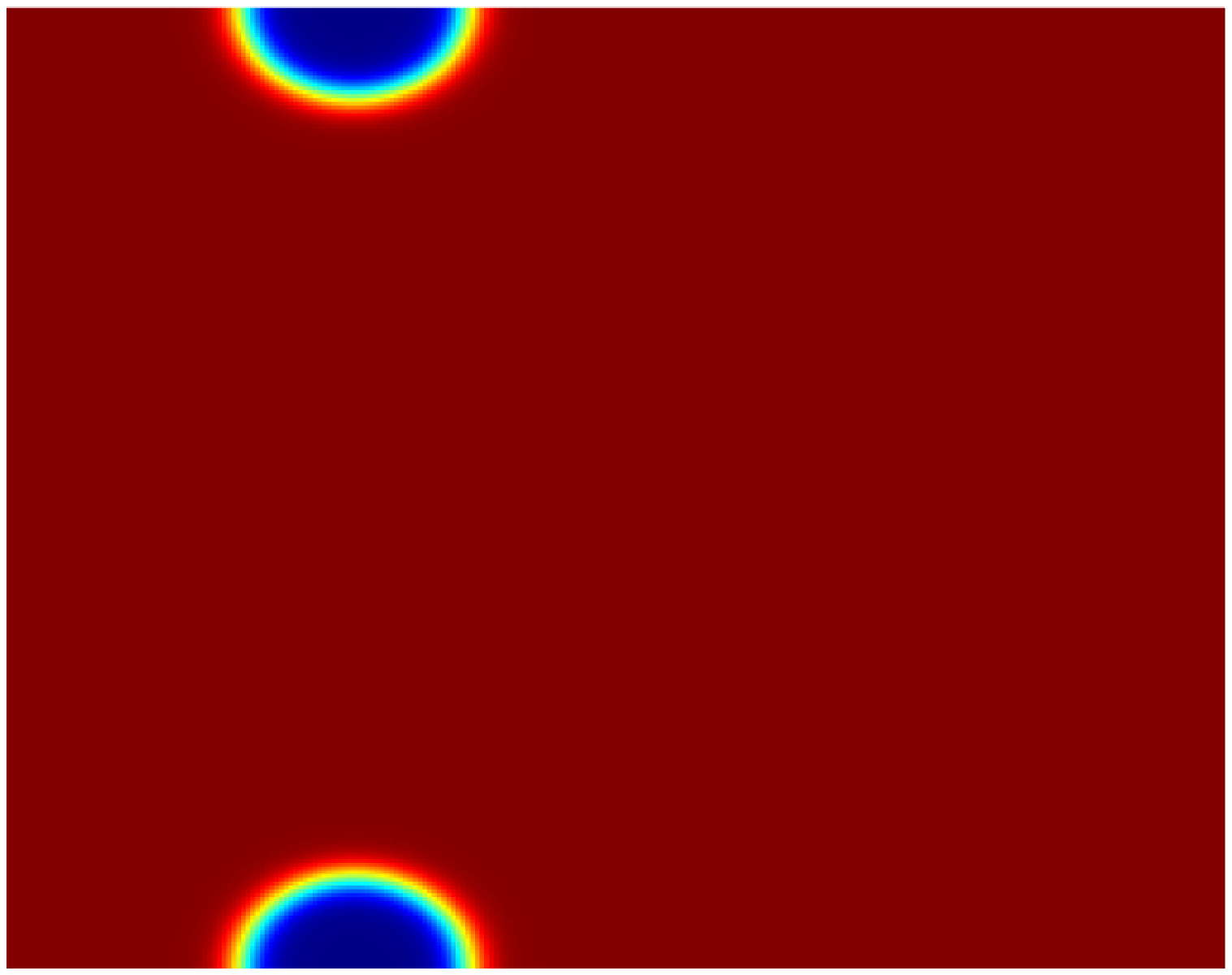}\includegraphics[scale=0.18]{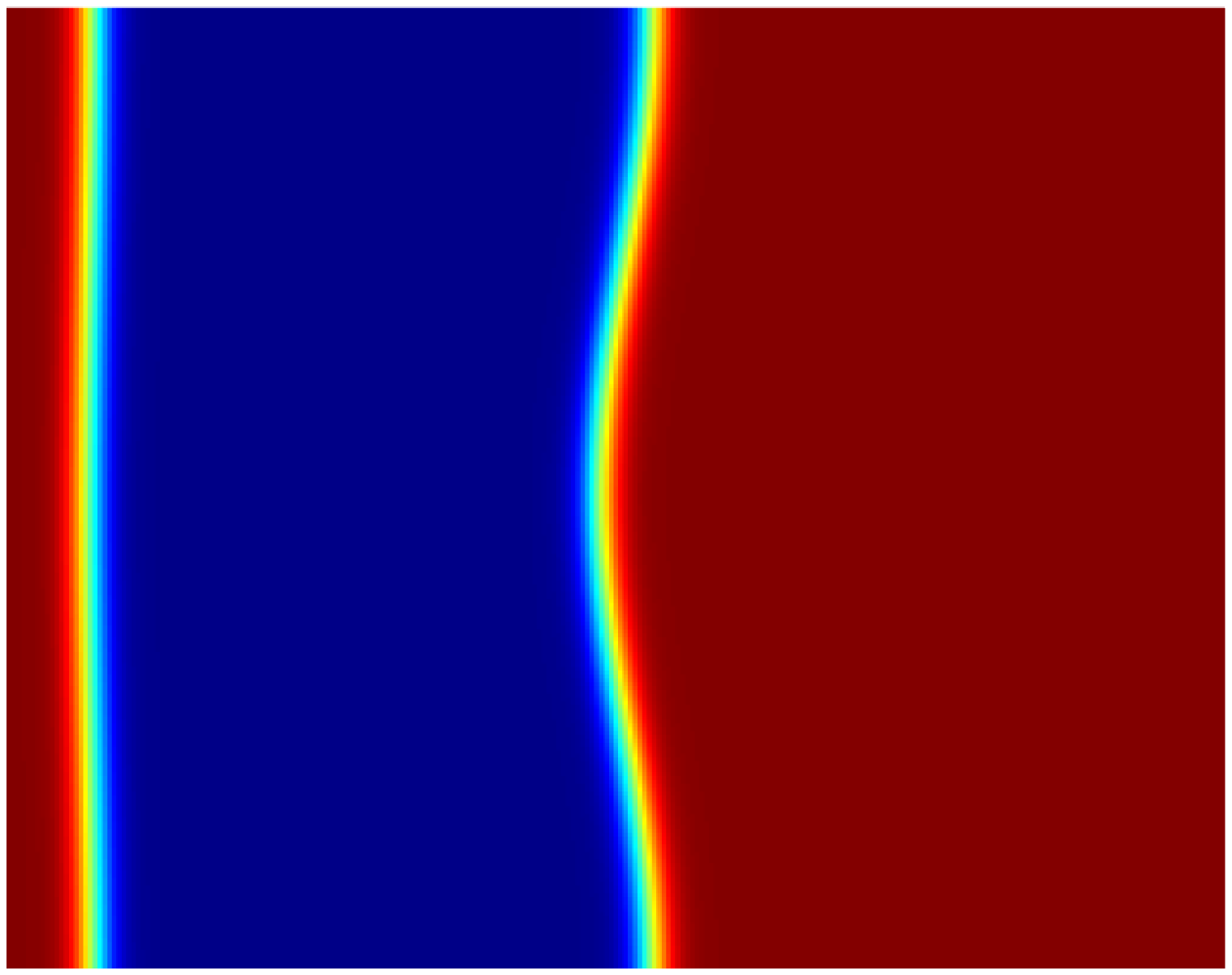}\includegraphics[scale=0.18]{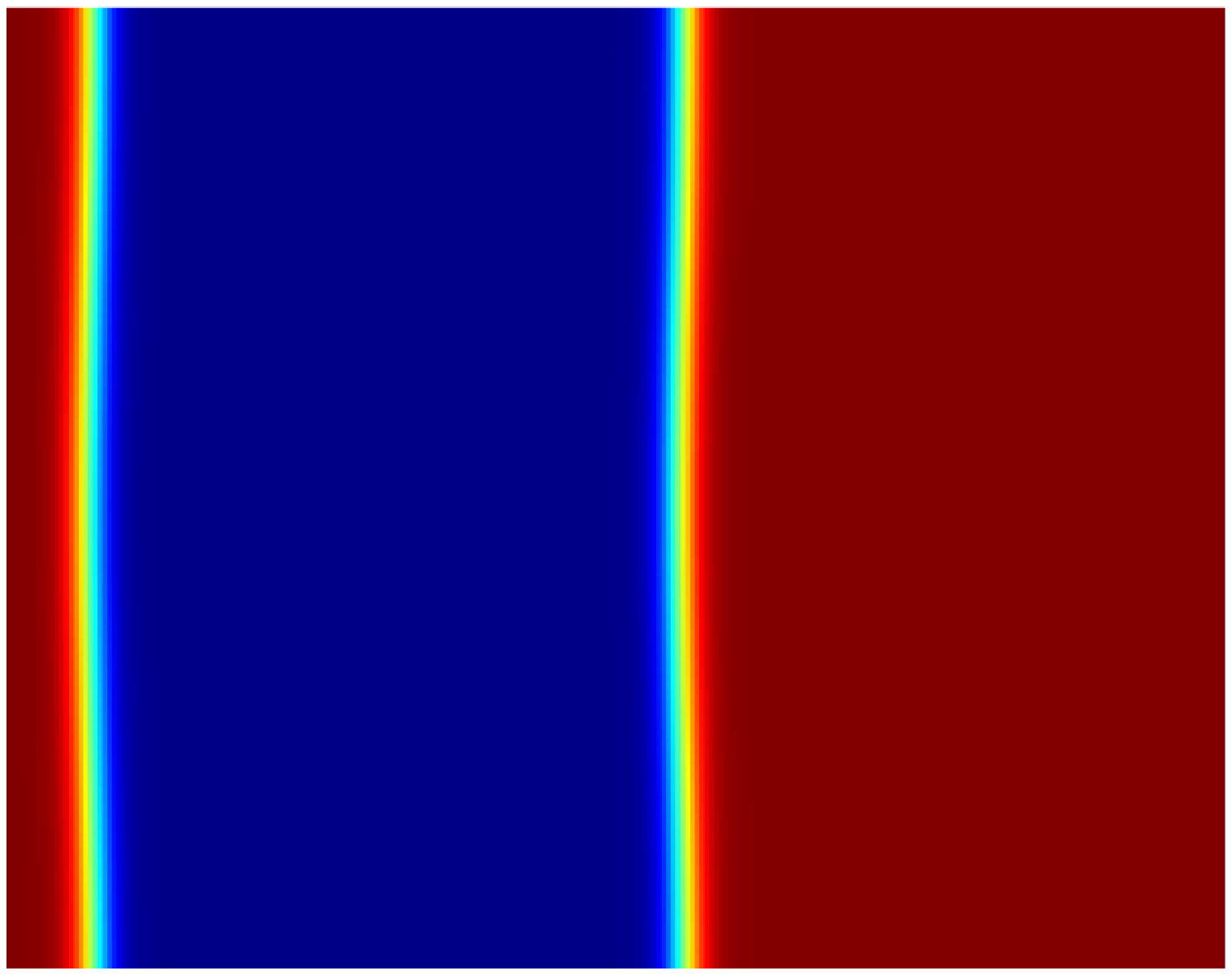}\includegraphics[scale=0.18]{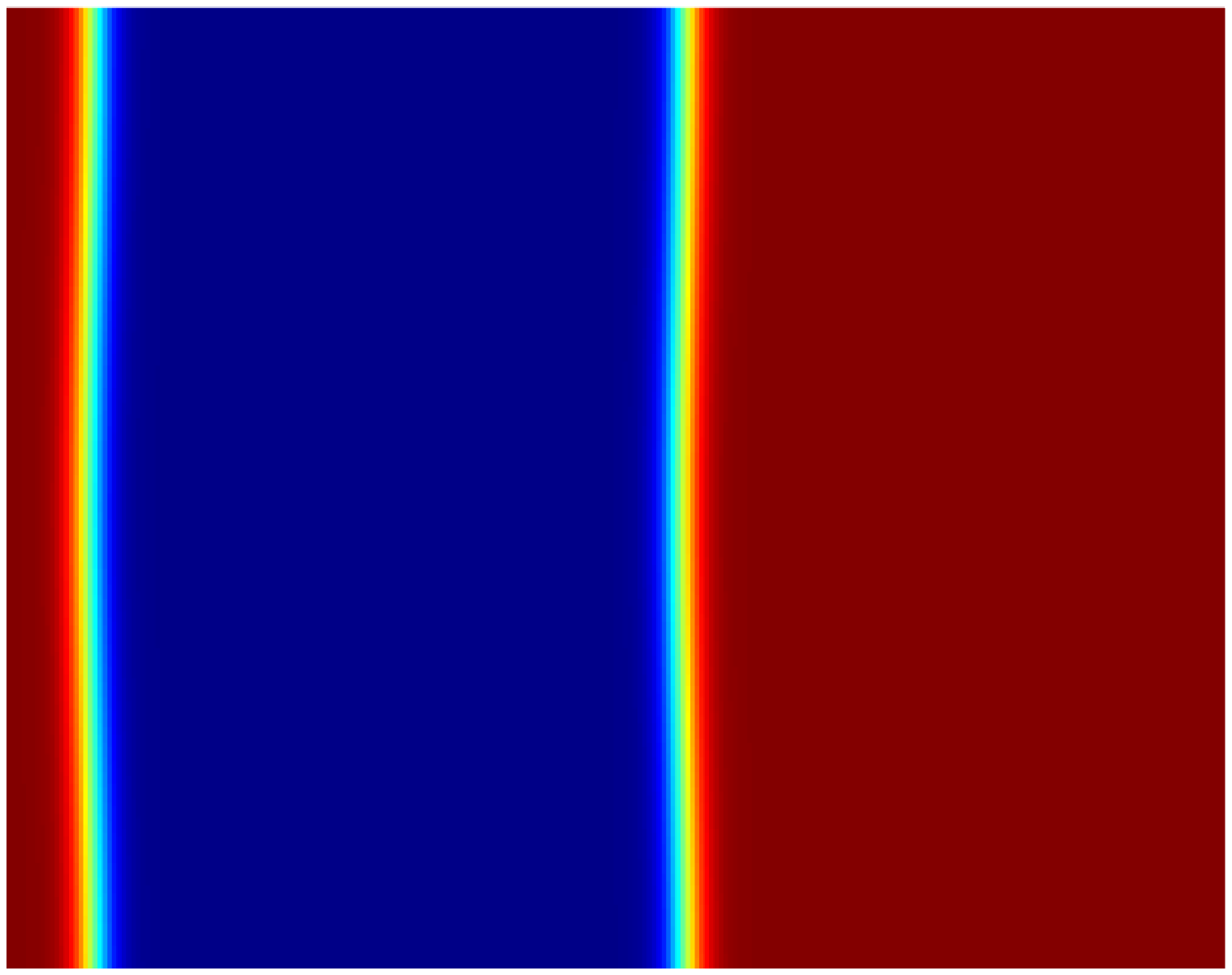}}
	\vskip -2mm
	\centerline{\includegraphics[scale=0.18]{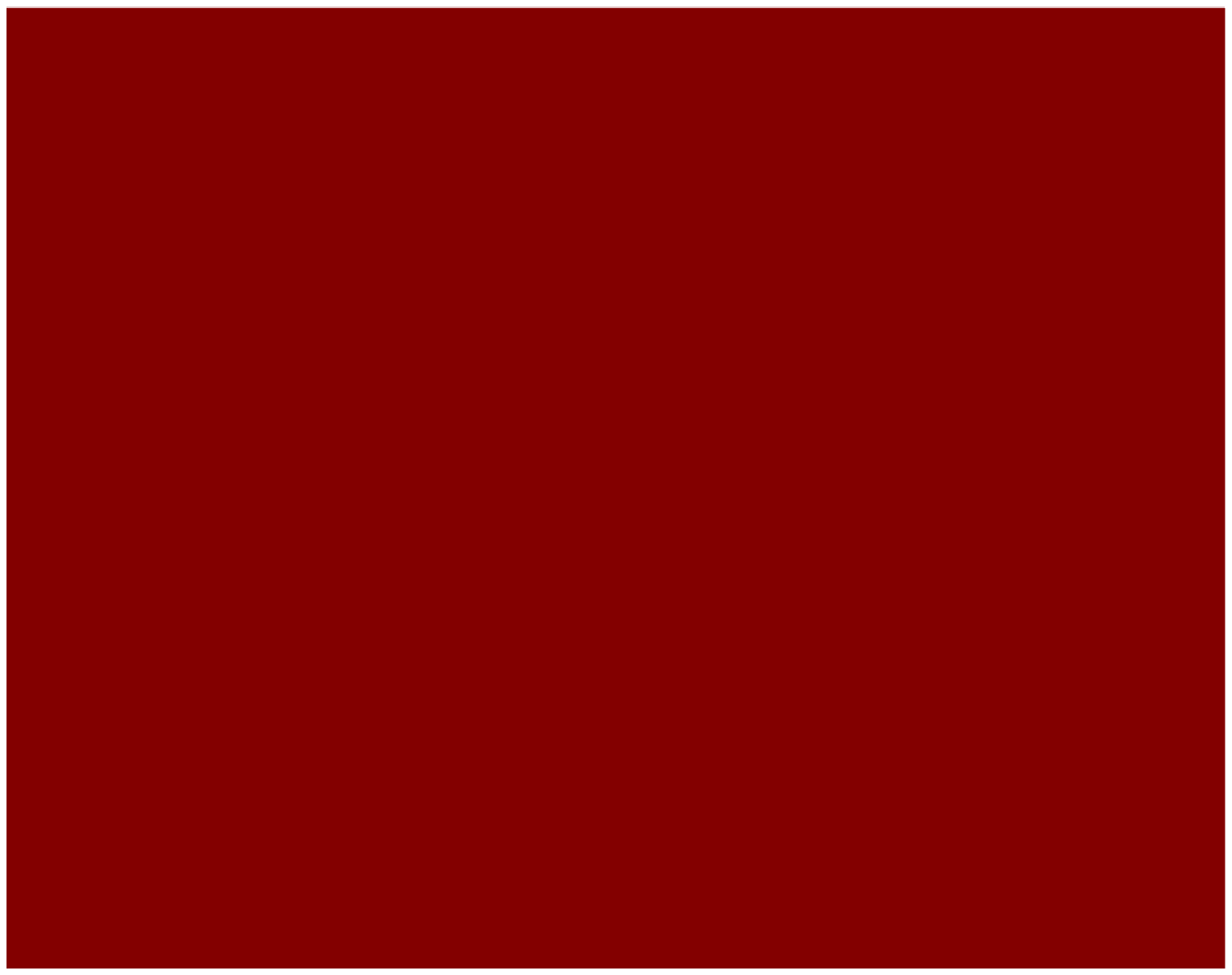}\includegraphics[scale=0.18]{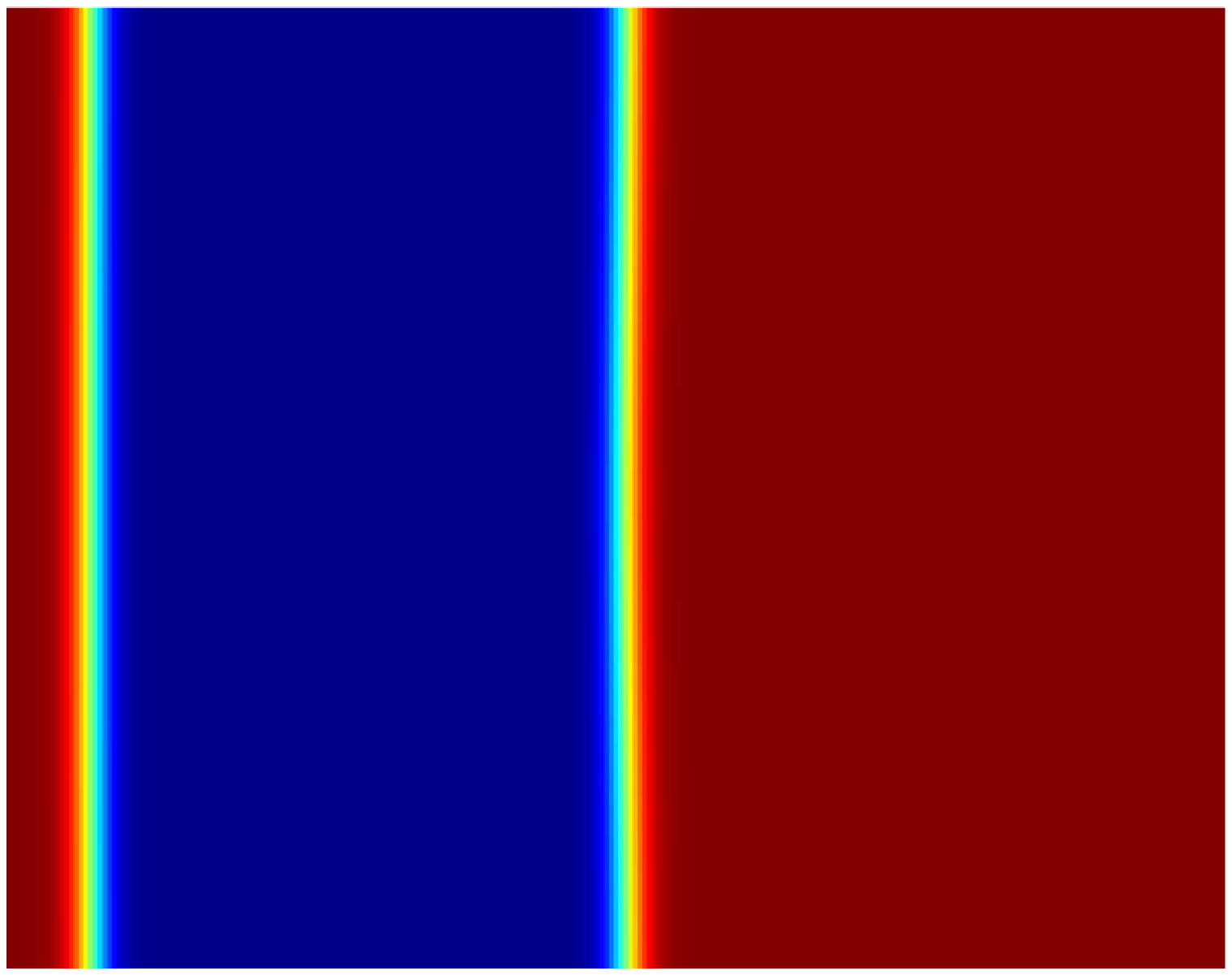}\includegraphics[scale=0.18]{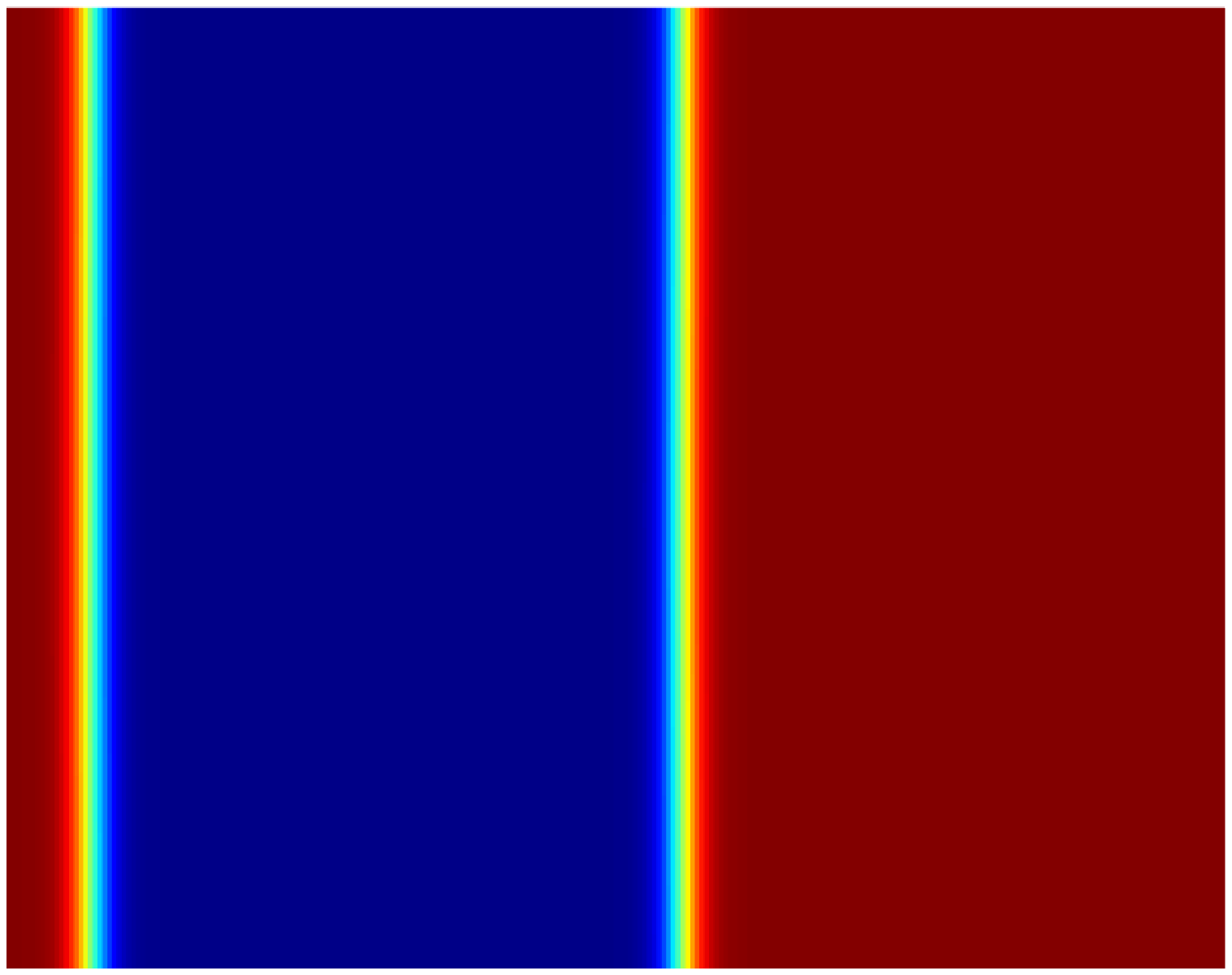}\includegraphics[scale=0.18]{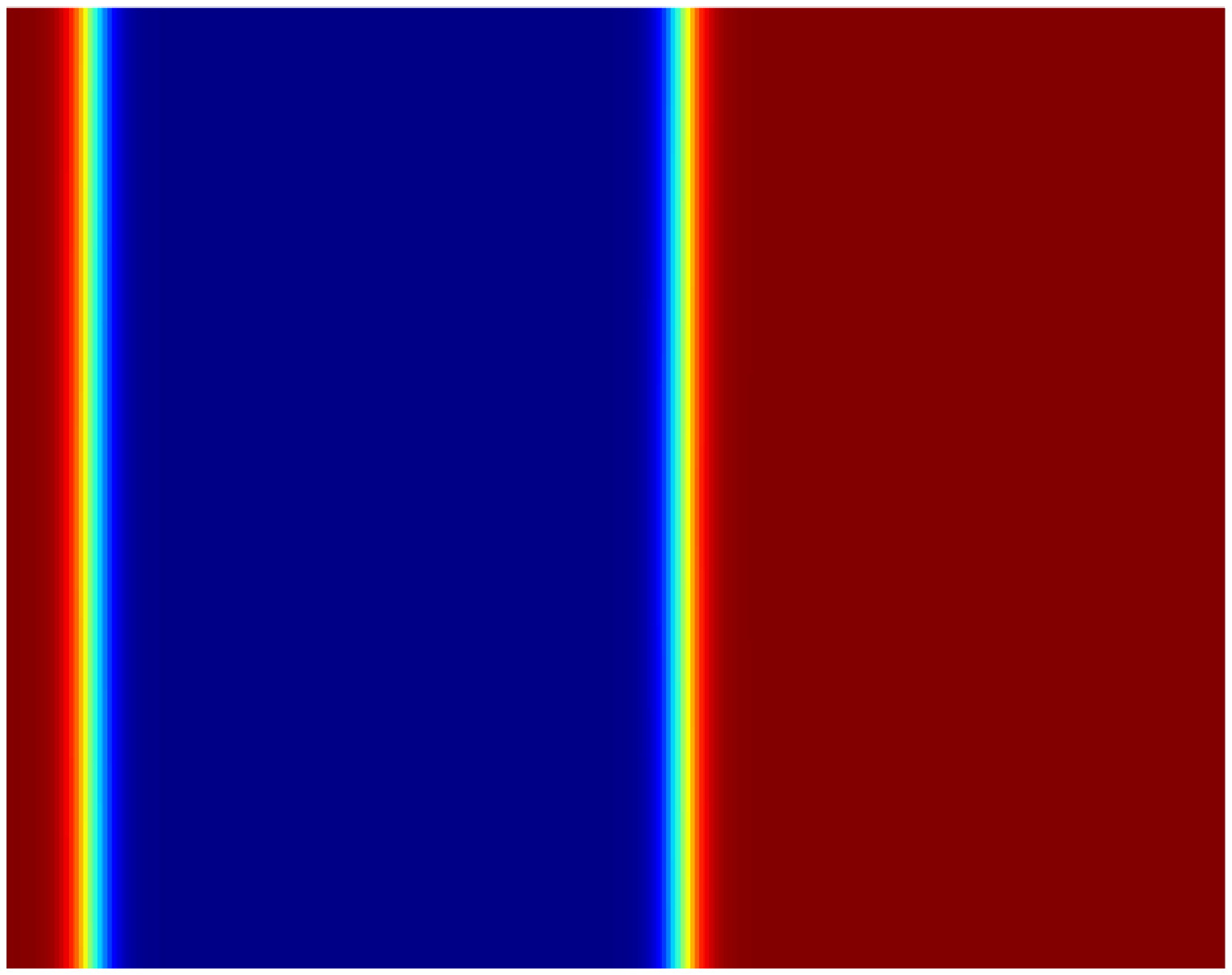}}
	\caption{Snapshots of the phase function $u$ produced by scheme \eqref{sta-semi} with $\epsilon=1e-6$(the first column), $\epsilon=1e-8$(the second column), $\epsilon=1e-10$(the third column) for \eqref{log-reg-sac} and the stabilized ETDRK2 scheme in \cite{DJLQ21} (the last column) for the corresponding deterministic model at the times $t=5$, $20$, $40$, $60$, $100$, $200$ and $500$ from top to bottom.}
	\label{fig6}
\end{figure*}

The simulations of coarsening dynamics for the model \eqref{log-reg-sac} and the deterministic model are performed by the stabilized semi-implicit scheme \eqref{sta-semi} and the second-order stabilized exponential time-differencing Runge--Kutta (ETDRK2) scheme proposed in \cite{DJLQ21}, respectively. This time step size used for the testing schemes is set to be $\tau=1e-3$.  For the spatial discretization, the Fourier spectral method with $256\times256$ modes is used for the model \eqref{log-reg-sac}, and the central finite difference method with uniform spatial grid spacing $h=2\pi/256$ is applied to the deterministic model.
Figure \ref{fig5} shows a comparison on the energy evolution between  \eqref{log-reg-sac} with several different noise densities $\epsilon$ and the deterministic model. It can be seen that the energy produced by stabilized semi-implicit scheme \eqref{sta-semi} converges to the one of the determined model computed by the stabilized ETDRK2 scheme as $\epsilon\rightarrow0$. Moreover, it is shown in Figure \ref{fig5} (c) that the computed energies for the regularized model \eqref{log-reg-sac} are consistent with the determined ones within the range of time-stepping scheme error about $\tau$, if the noise density $\epsilon\leq1e-10$ for the tested $\epsilon$. This is also shown in columns 3 and 4 of Figure \ref{fig6} showing several almost consistent snapshots of $u$ for the case of $\epsilon=1e-10$ and the deterministic problem along the coarsening process, respectively.
Another notable observation in Figure \ref{fig5} is that the computed energies are consistent with each other for all tested cases at early time (about $t=12$), and the ones of the tested cases $\epsilon\geq 1e-6$ exhibit completely different behaviors at the later time, which leads a totally different phase transition process seeing the first column of Figure \ref{fig6}.
For the tested cases of $\epsilon\leq1e-8$, the computed energies along the time are almost similar behaviors except the case of $\epsilon=1e-8$ with  $t\in(60,100)$, as shown in Figure \ref{fig5} (b). Furthermore, it  also leads to different phase transition behaviors for the case of $\epsilon=1e-8$ at the later time comparing with the tested case $\epsilon=1e-10$ and the deterministic problem, as shown in the last four rows of Figure \ref{fig6} column 2-4.

\section{Conclusions}
In this paper, we propose and study two multiscale models for a parabolic SPDE with a Flory--Huggins logarithmic potential which emerges from the soft matter and phase separation. The key tools are the
energy regularized technique for the Flory--Huggins logarithmic potential and the Stampacchia maximum principle for studying the possible singularity of the solution. Then we show the stability and strong convergence of a stabilized scheme for the considered logarithmic SPDE.
Following this work, many open problems deserve further investigation.
 For example, it is unknown how to establish the optimal strong and weak convergence rate of the energy regularization model and energy regularized numerical approximations.
What can we expect by extending the energy-regularization technique to  stochastic Cahn--Hillard equation with a Flory--Huggins logarithmic potential? These problems are very crucial to improve accuracy
of numerical simulation and to design high-order convergent schemes for logarithmic SPDEs. We plan to investigate them in the future.

\bibliographystyle{siamplain}
\bibliography{ref}

\end{document}